\newtheorem{thm}{Theorem}
\newtheorem{lem}[thm]{Lemma}
\newtheorem{prop}[thm]{Proposition}
\newtheorem{algo}[thm]{Algorithm}
\theoremstyle{definition}
\newtheorem{rmk}[thm]{Remark}
\newcounter{unnumber}
\newenvironment{proofff}{\prfff\rm}{\hfill{$\blacksquare$}\endprfff}
\newenvironment{prooffff}{\prffff\rm}{\hfill{$\blacksquare$}\endprffff}
\newenvironment{proofffff}{\prfffff\rm}{\hfill{$\blacksquare$}\endprfffff}
\setlist[enumerate]{label=$\rm{(\roman*)}$,leftmargin=\parindent,wide, labelwidth=!, labelindent=0pt}
\numberwithin{equation}{section}
\numberwithin{thm}{section}
\numberwithin{table}{section}
\numberwithin{figure}{section}
\newcommand{\sR}{\mathbb{R}}
\newcommand{\sH}{\mathcal{H}}
\newcommand{\sX}{\mathcal{X}}
\newcommand{\sY}{\mathcal{Y}}
\newcommand{\bH}{\boldsymbol{\sH}}
\newcommand{\mysum}{\displaystyle \sum\limits}
\newcommand{\Id}{\mathrm{Id}}
\newcommand{\bO}{\mathcal{O}}
\newcommand{\bN}{\boldsymbol{N}}
\newcommand{\bP}{\boldsymbol{P}}
\newcommand{\bQ}{\boldsymbol{Q}}
\newcommand{\KM}{Krasnosel'ski\u{\i}-Mann }
\newcommand{\E}{\mathcal{E}}
\newcommand{\F}{\mathcal{F}}
\newcommand{\Lag}{\mathcal{L}}
\newcommand{\dist}{{\rm dist}}
\newcommand{\sol}{\mathcal{S}}
\newcommand{\bz}{\boldsymbol{z}}
\newcommand{\bxi}{\boldsymbol{\xi}}
\title{Fast Forward-Backward splitting for monotone inclusions with a convergence rate of the tangent residual of $o(1/k)$}
\author{Radu Ioan Bo\c{t}\footnote{Faculty of Mathematics, University of Vienna, Oskar-Morgenstern-Platz 1, 1090 Vienna, Austria, e-mail: \url{radu.bot@univie.ac.at}. Research partially supported by Austrian Science Fund (FWF), project W 1260 and P 29809-N32.}
\and Dang-Khoa Nguyen\footnote{Faculty of Mathematics, University of Vienna, Oskar-Morgenstern-Platz 1, 1090 Vienna, Austria, e-mail: \url{dang-khoa.nguyen@univie.ac.at}. Research partially supported by Austrian Science Fund (FWF), project P 29809-N32.} \footnote{Faculty of Mathematics and Computer Science, University of Science, Vietnam National University, Ho Chi Minh City 700000, Vietnam.}
\and Chunxiang Zong\footnote{School of Mathematics and Statistics, Lanzhou University, Lanzhou 730000, People's Republic of China, e-mail: \url{zongchx19@lzu.edu.cn}. Research partially supported by the China Scholarship Council (202206180023) and conducted during a research visit at the University of Vienna in 2023.}}
\begin{document}
	
\maketitle	

\begin{abstract}
We address the problem of finding the zeros of the sum of a maximally monotone operator and a cocoercive operator. Our approach introduces a modification to the forward-backward method by integrating an inertial/momentum term alongside a correction term. We demonstrate that the sequence of iterations thus generated converges weakly towards a solution for the monotone inclusion problem. Furthermore, our analysis reveals an outstanding attribute of our algorithm: it displays rates of convergence of the order $o(1/k)$ for the discrete velocity and the tangent residual approaching zero. These rates for tangent residuals can be extended to fixed-point residuals frequently discussed in the existing literature. Specifically, when applied to minimize a nonsmooth convex function subject to linear constraints, our method evolves into a primal-dual full splitting algorithm. Notably, alongside the convergence of iterates, this algorithm possesses a remarkable characteristic of nonergodic/last iterate $o(1/k)$ convergence rates for both the function value and the feasibility measure. Our algorithm showcases the most advanced convergence and convergence rate outcomes among primal-dual full splitting algorithms when minimizing nonsmooth convex functions with linear constraints.
\end{abstract}	

\noindent \textbf{Key Words.} 
monotone inclusion, 
forward-backward algorithm,
Nesterov's momentum,
Lyapunov analysis,
convergence rates,
convergence of the iterates, convex optimization subject to linear constraints,
primal-dual algorithm
\vspace{1ex}

\noindent \textbf{AMS subject classification.} 49M29, 65K05, 68Q25, 90C25, 65B99
	
\section{Introduction and related works}\label{sec1}	

\subsection{Forward-Backward splitting}\label{subsec11}

A multitude of problems in optimization, variational inequalities, partial differential equations, mechanics, economics, game theory, and signal and image processing can be streamlined by solving a monotone inclusion. Monotone inclusions seeking to determine a zero of the sum of two maximally monotone operators are efficiently solved using the splitting paradigm. This approach involves evaluating the two operators separately from each other \cite{bauschkebook2011,RyuYin}. Depending on the nature of the operators, different splitting approaches have been considered. These include \emph{Douglas-Rachford splitting} \cite{Lions1979Mercier,Combettes2007Pesquet}, \emph{Forward-Backward splitting} \cite{Lions1979Mercier,Passty1979}, \emph{Tseng's Forward-Backward-Forward splitting} \cite{Tseng2000}, and {\it Forward-Reflected-Backward splitting} \cite{Malitsky2020Tam}.

In this paper, our focal point is the \emph{Forward-Backward splitting} approach, with our aim being to notably improve its convergence rate while maintaining its inherent convergence properties.

To introduce the problem, let $\sH$ be a real Hilbert space, $M \colon \sH \to 2^\sH$ a maximally monotone operator and $C \colon \sH \to \sH$ a $\beta$-cocoercive operator with parameter $\beta > 0$, that is
\begin{equation*}
	\left\langle C \left( z \right) - C \left( y \right) , z - y \right\rangle \geq \beta \left\lVert C \left( z \right) - C \left( y \right) \right\rVert ^{2} \quad \forall z , y \in \sH .
\end{equation*}
We aim to solve the monotone inclusion problem
\begin{equation}
\label{intro:pb:eq}
0\in M \left( z \right)+C \left( z \right),
\end{equation}
and assume that it has a solution, namely,
\begin{equation*}
{\cal Z} := \textrm{zer}\left( M + C \right) = \left\lbrace z \in \sH \colon M \left( z \right)+C \left( z \right) \right\rbrace \neq \emptyset. 
\end{equation*}
The \emph{Forward-Backward splitting} (FBS) algorithm, introduced by Passty \cite{Passty1979}, stands out as the most successful and widely used method for solving \eqref{intro:pb:eq}. In its classical formulation, given an initial point $z_{0} \in \sH$, the algorithm generates a sequence $(z_k)_{k \geq 0}$ using the following iteration
\begin{equation}\label{fbs}
(\forall k \geq 0) \quad z_{k+1} := J_{\gamma M} \left(z_k - \gamma C \left( z_{k} \right) \right),
\end{equation}
where $\gamma \in(0,2\beta)$ represents a fixed step size. Here, we define
\begin{equation*}
J_{\gamma M} \colon \sH \to \sH, \quad	J_{\gamma M} := \left( \Id + \gamma M \right) ^{-1} ,
\end{equation*}
as the \emph{resolvent} of the operator $M$ with parameter $\gamma > 0$, where  $\Id \colon \sH \to \sH$ represents the \emph{identity operator}.

The FBS algorithm leverages the specific characteristics of each operator and treats them accordingly. As $C$ is single-valued, it is naturally addressed through a \emph{forward step}. Conversely, the maximal monotonicity of $M$ ensures that its resolvent $J_{\gamma M}$ is a single-valued operator, universally defined, and $1$-cocoercive and is therefore addresses through a \emph{backward step}. However, closed-form expressions for the resolvents of certain operator classes arising in applications are readily available. Regarding its convergence properties, the sequence $(z_k)_{k \geq 0}$ weakly converges to a solution of \eqref{intro:pb:eq} as $k \rightarrow +\infty$. The FBS algorithm \eqref{fbs} and its variations have been extensively explored in \cite{Combettes2005Wajs,Chen1997Rockafellar,Combettes2014Vu,Combettes2015Yamada}.

The rate of convergence of the FBS algorithm is typically assessed using quantities such as $$\left\lVert z_{k} - z_{k-1} \right\rVert \quad \textrm{ and } \quad r_{\mathrm{fix}, \gamma} \left( z_{k} \right) := \left\lVert z_{k} - J_{\gamma M} \left( z_{k} - \gamma C \left( z_{k} \right) \right) \right\rVert.$$ These represent the \emph{discrete velocity} and the \emph{fixed-point residual}, respectively. While these formulas coincide in the context of the classical FBS algorithm \eqref{fbs}, they may differ for its various variants. However, the notable significance attributed to the fixed-point residual is motivated by the fact that the monotone inclusion problem \eqref{intro:pb:eq} can be expressed equivalently as a fixed-point problem. Precisely, for any $\gamma > 0$, the equivalence holds
\begin{equation}\label{pb:fix}
	0 \in M \left( z \right) + C \left( z \right)
	\Leftrightarrow z - \gamma C \left( z \right) \in z + \gamma M \left( z \right) \Leftrightarrow   z =  J_{\gamma M} \left( z - \gamma C \left( z \right) \right) .
\end{equation}
This demonstrates that the FBS algorithm can be viewed as an application of the \KM method \cite{bauschkebook2011,RyuYin} to a specific fixed-point problem. Consequently, the convergence rate of the FBS method can be derived from the more general \KM iteration \cite{Baillon-Bruck,Bravo-Cominetti,Davis-Yin:2016,Matsushita}, and therefore it can be expressed as
\begin{equation*}
	r_{\mathrm{fix}, \gamma} \left( z_{k} \right) = \left\lVert z_{k} - J_{\gamma M} \left( z_{k} - \gamma C \left( z_{k} \right) \right) \right\rVert = o \left( \dfrac{1}{\sqrt{k}} \right) \textrm{ as } k \to + \infty.
\end{equation*}

\subsection{Related works}\label{subsec12}

Over the past two decades, there has been a considerable surge of interest in the optimization community and related fields, focusing on accelerating the convergence speed of first-order methods. It was early recognized that inertial/momentum effects possess the capability to expedite convergence without significantly increasing computational effort \cite{Nesterov1983, Nesterov2004, Nesterov2005, Beck2009Teboulle, Chambolle2016Dossal, Attouch2016Peypouquet, Attouch2018Cabot}. 

In the context of minimizing a differentiable function $\phi \colon \sH \to \sR$, Polyak introduced the \emph{Heavy Ball Method} in \cite{Polyak1964}, given by
\begin{equation}
	\label{algo:Polyak}
	(\forall k \geq 1) \quad z_{k+1} := z_{k} + \alpha \left( z_{k} - z_{k-1} \right) - \gamma \nabla \phi \left( z_{k} \right) ,
\end{equation}
which originates from the discretization of the second-order dynamical system
\begin{equation*}
	\ddot{z} \left( t \right) + \mu \dot{z} \left( t \right) + \nabla \phi \left( z \left( t \right) \right) = 0.
\end{equation*}
Subsequent extensive research on this dynamical system was conducted by Attouch, Goudou, and Redont in \cite{Attouch2000}.

In \cite{Alvarez2000}, Alvarez examined the implicit counterpart of \eqref{algo:Polyak} to address situations where $\phi$ might lack differentiability. Subsequently, Alvarez, in collaboration with Attouch \cite{Alvarez2001Attouch, Alvarez2003},  extended this approach to solve monotone inclusions $0 \in M(z)$, where $M \colon \sH \to 2^\sH$ represents a maximally monotone operator. Given the initial points $z_{0}, z_{1} \in \sH$ and a step size $\gamma > 0$, they proposed the iterative process
\begin{equation*}
(\forall k \geq 1) \quad z_{k+1} - z_{k} - \alpha_{k} \left( z_{k} - z_{k-1} \right) + \gamma M \left( z_{k+1} \right) \ni 0,
\end{equation*}
called \emph{inertial Proximal Point Method} which, in terms of the resolvent, can be equivalently written as
\begin{equation}\label{l4}
(\forall k \geq 1) \quad z_{k+1} := J_{\gamma M} \left( z_{k} + \alpha_{k} \left( z_{k} - z_{k-1} \right) \right).
\end{equation}
The authors demonstrated that, subject to suitable conditions on $\left( \alpha_{k} \right) _{k \geq 1} \in [0,1)$, the sequence $\left( z_{k} \right) _{k \geq 0}$ converges weakly to a zero of $M$ as $k \to + \infty$.

Over the past decade, various approaches have emerged in the literature focusing on integrating inertial effects into the resolution of monotone inclusions with additive structure, as represented in \eqref{intro:pb:eq}. In \cite{Moudafi2003}, Moudafi and Oliny introduced the following iterative scheme
\begin{equation*}
(\forall k \geq 1) \quad 	z_{k+1} := J_{\gamma M} \left( z_{k} + \alpha_{k} \left( z_{k} - z_{k-1} \right) - \gamma C \left( z_{k} \right) \right).
\end{equation*}
Inspired by algorithms designed for optimization problems involving inertial effects 
(see \cite{Nesterov1983,Nesterov2004,Nesterov2005,Beck2009Teboulle}), Lorenz and Pock intoduced the  \emph{inertial Forward-Backward splitting} algorithm in \cite{Lorenz2015Pock}, expressed by
\begin{equation}
(\forall k \geq 1) \quad 	 \left\{
\begin{aligned}\label{lef4}
y_{k} 	& := z_{k}+\alpha_{k}(z_{k}-z_{k-1});  \\
z_{k+1} & := J_{\gamma M}\left(y_{k}-\gamma C \left( y_{k} \right) \right).
\end{aligned}\right.
\end{equation}
Although convergence of the iterates was demonstrated, no convergence rate results could be provided. Moreover, the convergence of iterates was achieved under an assumption regarding the sequence $\left( \alpha_{k} \right) _{k \geq 1}$, necessitating either an adaptive selection dependent on the previously generated iterates or an upper bound constraint of $1/3$.

The \emph{Relaxed Inertial Forward-Backward splitting} algorithm, introduced by Attouch and Cabot in \cite{Attouch2019Cabot} to address \eqref{intro:pb:eq}, is defined as follows:
\begin{equation}
(\forall k \geq 1) \quad \left\{
\begin{aligned}\label{leff4}
y_{k} & :=z_{k}+\alpha_{k}(z_{k}-z_{k-1});  \\
z_{k+1} & :=(1-\rho_k)y_k + \rho_kJ_{\mu_k M}\left(y_{k}-\mu_{k}C(y_{k})\right),
\end{aligned}\right.
\end{equation}
where $\left( \alpha_{k} \right) _{k \geq 1}, \left( \rho_{k} \right) _{k \geq 1}$, and $\left( \mu_{k} \right) _{k \geq 1}$ are positive sequences.  In \cite{Attouch2019Peypouquet}, Attouch and Peypouquet demonstrated that the incorporation of relaxation parameters $\left( \rho_{k} \right)_{k \geq 1}$ in the iterative scheme allows for a more flexible allows for a more flexible selection of the inertial parameters  $\left( \alpha_{k} \right) _{k \geq 1}$ than the previously described.

Driven by their remarkable success in optimization, methods aimed at accelerating the convergence speed of algorithms for monotone inclusions have awakened the interest of the scientific community in recent years. Using the performance estimate problem technique, Kim derived  in \cite{Kim2021} an \emph{Accelerated Proximal Point Method} (APPM) tailored for finding a zero of a maximally monotone operator within a finite-dimensional setting. Given the initial points $y_{-1} = y_{0} = z_{0} = z_{1} \in \sH$, the algorithm is expressed as
\begin{equation}
(\forall k \geq 1) \quad  \left\{
\begin{aligned}\label{Kim}
	y_{k} 	& := z_{k} + \alpha_{k}(z_{k}-z_{k-1}) + \alpha_{k} \left( y_{k-2} - z_{k-1} \right) ;  \\
	z_{k+1} & := J_{\gamma M}\left(y_{k}\right).
\end{aligned}\right.
\end{equation}
This algorithm enhances the first update rule of \eqref{l4} with the \emph{correction term} ``$y_{k-2} - z_{k-1}$". For $\alpha_{k} = 1 - \frac{2}{k+1}$ for every $k \geq 1$, \eqref{Kim} displays a convergence rate for the fixed-point residual
\begin{equation*}
	\left\lVert y_{k} - J_{\gamma M}\left(y_{k}\right) \right\rVert = \bO \left( \dfrac{1}{k} \right) \textrm{ as } k \to + \infty .
\end{equation*}
The convergence of the iterates generated by \eqref{l4} was not addressed in this work.

Recently,  in \cite{Paul2021Emile}, Maing\'{e} introduced the \emph{Corrected Relaxed Inertial Forward-Backward Algorithm} (CRIFBA), achieved by integrating in \eqref{fbs} a constant relaxation factor, an inertial/momentum term and a correction term.  With initial points $y_{0}, z_{0}, z_{1} \in \sH$, the iterative scheme is expressed as
\begin{equation}
(\forall k \geq 1) \quad  \left\{
\begin{aligned}\label{lefff4}
 y_{k} & :=z_{k}+\alpha_{k}(z_{k}-z_{k-1})+\delta_k(y_{k-1}-z_{k});  \\
z_{k+1}& :=(1-\rho)y_k + \rho J_{\gamma M}\left(y_{k}-\gamma C(y_{k})\right),
\end{aligned}\right.
\end{equation}
where $\left( \alpha_{k} \right) _{k \geq 1} \subseteq [0,1)$, $\left( \delta_k \right) _{k \geq 1} \subseteq [0,1)$, $\rho \in (0,1)$, and $\gamma>0$.  Maing\'{e}'s algorithm incorporates a correction term, denoted as ``$y_{k-1}-z_{k}$'', which carries a coefficient that may be distinct from that of the momentum term.  Under conditions where $\alpha_{k} \sim 1 - \frac{1}{k}$ and $\delta_k \sim 1 - \frac{1}{k}$, the weak convergence of the iterate sequence to a zero of $M+C$ is demonstrated. Moreover, convergence rates of $o(1/k)$ are established for both the discrete velocity and the fixed-point residual. The selection of the sequence of inertial parameters $\left( \alpha_{k} \right) _{k \geq 1}$ aligns with those used in fast numerical methods for convex optimization  (see \cite{Chambolle2016Dossal, Attouch2016Peypouquet, Attouch2018Cabot}). 

As \eqref{intro:pb:eq} can be reformulated as a fixed-point problem (see \eqref{pb:fix}), accelerated Forward-Backward splitting methods can be directly derived from techniques designed for fixed-point problems. One such technique, aiming to expedite the convergence of fixed-point problems, traces its origins to the Halpern iteration \cite{Halpern} and involves incorporating an anchoring point \cite{Yoon2021Ryu,Tran-Dinh-Lou,Park2022Ryu,Yang2022Zheng}. In \cite{Tran-Dinh}, Tran-Dinh highlighted connections between this approach and methods utilizing an inertial/momentum term, particularly for specific parameter choices.

Recently, in \cite{Bot2023Nguyen}, we derived an iterative scheme for solving \eqref{intro:pb:eq} from the general \emph{Fast Krasnosel'ski\u{\i}-Mann algorithm} (fKM) introduced for fixed-point problems (see also \cite{Bot2023Robert_Nguyen}). With initial points  $z_{0}, z_{1} \in \sH$, $\alpha >2$, step size $0 < \gamma < 2 \beta$ and $0 < s < 2 - \frac{\gamma}{2 \beta}$, the iterative scheme is defined as
\begin{align} \label{fast_KM}
(\forall k \geq 1) \quad z_{k+1} := & \ \left( 1 - \dfrac{s\alpha}{2 \left( k + \alpha \right)} \right) z_{k}  + \dfrac{(1-s)k}{k+\alpha} (z_k - z_{k-1}) + \dfrac{s(\alpha + 2k)}{2(k + \alpha)} J_{\gamma M} \left( z_{k} - \gamma C \left( z_{k} \right) \right) \nonumber 
\\
& \ - \dfrac{sk}{k + \alpha} J_{\gamma M} \left( z_{k-1} - \gamma C \left( z_{k-1} \right) \right) .
\end{align}
The resulting iterates converge weakly to a solution of \eqref{intro:pb:eq}. Furthermore, the algorithm exhibits a convergence rate of $o \left( \frac{1}{k } \right)$ for both the discrete velocity $\left\lVert z_k -z_{k-1} \right\rVert$ and the fixed-point residual $r_{\mathrm{fix}, \gamma} \left( z_{k} \right)$ as $k \rightarrow +\infty$. 

\subsection{Our contributions}\label{subsec13}

We propose a \emph{Fast Forward-Backward algorithm} to solve \eqref{intro:pb:eq}, incorporating an inertial/momentum term alongside a correction term. For the sequence $(z_k)_{k \geq 0}$ generated by our method, we demonstrate that both the discrete velocity $\left\lVert z_k - z_{k-1} \right\rVert$ and the tangent residual $r_{\mathrm{tan}} \left( z_k \right)$ converge to zero with a rate of $o \left( \frac{1}{k} \right)$ as $k \rightarrow +\infty$. The \emph{tangent residual} is defined as
\begin{equation*}
	r_{\mathrm{tan}} \left( z \right) := \dist (0, M(z) + C(z)) = \inf_{\xi\in M(z)} \left\lVert \xi + C \left( z \right) \right\rVert ,
\end{equation*}
and it holds (see Lemma \ref{bb1} in the Appendix) that
$$r_{\mathrm{fix}, \gamma} \left( z \right) =  \left\lVert z - J_{\gamma M} \left( z - \gamma C \left( z \right) \right) \right\rVert \leq \gamma \dist (0, M(z) + C(z))  = \gamma r_{\mathrm{tan}} \left( z \right) \quad \forall z \in \sH.$$
For discussions concerning these convegence measures in the context of \emph{variational inequality} problems, we refer to \cite{Sedlmayer2023}.

Hence, the convergence rate of $o \left( \frac{1}{k} \right)$ as $k \rightarrow +\infty$ for the fixed-point residual immediately follows. Moreover, we will establish the weak convergence of the sequence $(z_k)_{k \geq 0}$ to a solution of \eqref{intro:pb:eq} as $k \rightarrow +\infty$.

As a significant application case, we derive a primal-dual full splitting algorithm for solving optimization problems of the form
\begin{align}
\label{intro:mn-mn}
	&\min\limits_{x \in \sX}  f\left(x \right) + h\left(x \right),\nonumber \\
& \textnormal{subject to } Ax=b.
\end{align}
Here, $\sX$ and $\sY$ represent real Hilbert spaces, where $f \colon \sX \to \mathbb{R} \cup \{+\infty\}$ is a proper,  convex and lower semicontinuous function, $h\colon \sX \to \mathbb{R}$ is a convex and differentiable function with a gradient that is $\beta^{-1}$-Lipschitz continuous ($\beta > 0$),  and $A \colon \sX\rightarrow \sY$ is a linear continuous operator. The primal-dual algorithm is derived from the general iterative scheme via the product space approach by  V{\~{u}} \cite{Vu2011}. 

Given the generated sequence of primal-dual iterates $\left(x_{k},\lambda_{k} \right)_{k \geq 0}$, we observe convergence rates for the velocities
\begin{equation*}
	\left\lVert x_{k} - x_{k-1} \right\rVert = o \left( \dfrac{1}{k} \right)	
	\qquad \textrm{ and } \qquad
	\left\lVert \lambda_{k} - \lambda_{k-1} \right\rVert = o \left( \dfrac{1}{k} \right)
	\qquad \textrm{ and } \qquad
	k \to + \infty.
\end{equation*}
Moreover, we derive convergence rates for the \emph{function values} and the \emph{feasibility measure}:
\begin{equation*}
	\left\lvert \left( f+h \right) \left( x_{k} \right) - \left( f+h \right) \left( x_{*} \right) \right\rvert = o \left( \dfrac{1}{k} \right)
	\qquad \textrm{ and } \qquad
	\left\lVert Ax_{k} - b \right\rVert = o \left( \dfrac{1}{k} \right) 
	\qquad \textrm{ and } \qquad
	k \to + \infty,
\end{equation*}
respectively. Here, $x_{*}$ denotes an optimal solution of \eqref{intro:mn-mn} and  $\left( f+h \right) \left( x_{*} \right)$  its optimal objective value. The convergence results are derived from the rates of convergence for the tangent residual in the monotone inclusion problem.   It is important to highlight that deriving such statements from the convergence rate results based on fixed-point residual seems unattainable. Additionally, we establish the weak convergence of $\left(x_{k},\lambda_{k} \right)_{k \geq 0}$ to a primal-dual optimal solution.

Finally, we verify the theoretical findings by assessing the performance of the proposed algorithm through numerical experiments.

\section{The proposed algorithm}\label{sec2}

In this section, we formulate the algorithm and conduct a thorough analysis of its convergence.

\begin{mdframed}
\begin{algo}
\label{algo:ffb}
Let $\alpha > 2, \gamma > 0$, $z_{0}, y_{0} \in \sH$, and $z_{1} := J_{\gamma M} \left( y_{0} - \gamma C \left( z_{0} \right) \right)$. We set
\begin{equation}
\label{algo:z-y}
(\forall k \geq 1) \quad
\begin{dcases}
y_{k} 
& := z_{k} + \left( 1 - \dfrac{\alpha}{k + \alpha} \right) \left( z_{k} - z_{k-1} \right) + \left( 1 - \dfrac{\alpha}{2 \left( k + \alpha \right)} \right) \left( y_{k-1} - z_{k} \right);  \\
z_{k+1}
& := J_{\gamma M} \left( y_{k}-\gamma C \left( z_{k} \right)\right).
\end{dcases}
\end{equation}
\end{algo}
\end{mdframed}

In the following, we provide an equivalent formulation for \cref{algo:ffb}, which will play a central role in the convergence analysis. 

\begin{prop}
\label{rmk:sub}
Let $z_{0}, y_{0} \in \sH$, $z_{1} := J_{\gamma M} \left( y_{0} - \gamma C \left( z_{0} \right) \right)$, and $\xi_{1} := \frac{1}{\gamma} \left( y_{0} - z_{1} \right) - C \left( z_{0} \right) \in M \left( z_{1} \right)$. Then the sequence $(z_k)_{k \geq 0}$ generated in \cref{algo:ffb} can also be generated equivalently by the following iterative scheme
\begin{equation}
\label{algo:z-xi}
(\forall k \geq 1) \quad \!\!\!
\begin{dcases}
z_{k+1} \!\!
& := J_{\gamma M} \left( z_{k} -\gamma C \left( z_{k} \right) + \left( 1 - \dfrac{\alpha}{k + \alpha} \right) \left( z_{k} - z_{k-1} \right) + \frac{2k+\alpha}{2(k+\alpha)} \gamma \left(\xi_k +C \left( z_{k-1} \right)\right)\right);\\
\xi_{k+1} \!\!
& := \frac{1}{\gamma}\left( z_{k} -z_{k+1} + \left( 1 - \dfrac{\alpha}{k + \alpha} \right) ( z_{k} - z_{k-1} )\right) + \frac{2k+\alpha}{2(k+\alpha)} \left(\xi_k +C \left( z_{k-1} \right)\right) - C \left( z_{k} \right) .
\end{dcases}
\end{equation}
In addition, it holds
\begin{equation}
\label{algo:inc-M}
\xi_{k}\in M \left( z_{k} \right) \qquad \quad \forall k\geq 1.
\end{equation}
\end{prop}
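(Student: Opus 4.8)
The plan is to show by induction on $k \geq 1$ that the two schemes generate the same sequence $(z_k)$, simultaneously tracking an auxiliary sequence $(\xi_k)$ defined as in \eqref{algo:z-y} via $\xi_{k+1} := \frac{1}{\gamma}(y_k - z_{k+1}) - C(z_k)$, and verifying that $\xi_k \in M(z_k)$ throughout. The base case is essentially definitional: since $z_1 = J_{\gamma M}(y_0 - \gamma C(z_0))$, the defining property of the resolvent gives $y_0 - \gamma C(z_0) \in z_1 + \gamma M(z_1)$, hence $\xi_1 = \frac{1}{\gamma}(y_0 - z_1) - C(z_0) \in M(z_1)$, which is exactly \eqref{algo:inc-M} for $k=1$.

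For the inductive step, the key algebraic identity is to rewrite the argument of the resolvent in the second line of \eqref{algo:z-y}. Starting from $y_k = z_k + \left(1 - \frac{\alpha}{k+\alpha}\right)(z_k - z_{k-1}) + \left(1 - \frac{\alpha}{2(k+\alpha)}\right)(y_{k-1} - z_k)$, I would substitute $y_{k-1} - z_k = (y_{k-1} - z_k)$ and use the relation $\xi_k = \frac{1}{\gamma}(y_{k-1} - z_k) - C(z_{k-1})$ (valid from the previous step, whether $k=1$ using $\xi_1$'s definition or $k \geq 2$ from the induction hypothesis) to replace $y_{k-1} - z_k = \gamma(\xi_k + C(z_{k-1}))$. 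Noting that $1 - \frac{\alpha}{2(k+\alpha)} = \frac{2k+\alpha}{2(k+\alpha)}$, this turns $y_k$ into
\[
y_k = z_k + \left(1 - \frac{\alpha}{k+\alpha}\right)(z_k - z_{k-1}) + \frac{2k+\alpha}{2(k+\alpha)}\gamma\bigl(\xi_k + C(z_{k-1})\bigr),
\]
so that $y_k - \gamma C(z_k)$ is precisely the argument of $J_{\gamma M}$ in the first line of \eqref{algo:z-xi}; this yields the claimed update for $z_{k+1}$. Then $\xi_{k+1} := \frac{1}{\gamma}(y_k - z_{k+1}) - C(z_k)$, upon substituting the above expression for $y_k$, becomes exactly the second line of \eqref{algo:z-xi}, and the resolvent identity $y_k - \gamma C(z_k) \in z_{k+1} + \gamma M(z_{k+1})$ gives $\xi_{k+1} \in M(z_{k+1})$, closing the induction.

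The argument is essentially bookkeeping, so there is no serious obstacle; the only point requiring care is the treatment of the $k=1$ case, where one must check that the definition $\xi_1 := \frac{1}{\gamma}(y_0 - z_1) - C(z_0)$ given in the proposition statement is consistent with the recursive pattern $\xi_k = \frac{1}{\gamma}(y_{k-1} - z_k) - C(z_{k-1})$ that drives the substitution — which it is, by inspection. One should also confirm that the coefficient bookkeeping is symmetric, i.e. that once \eqref{algo:z-xi} is run forward it reproduces $y_k$ via $y_k = z_{k+1} + \gamma(\xi_{k+1} + C(z_k))$, so that the two schemes are genuinely equivalent in both directions and not merely that \eqref{algo:z-y} implies \eqref{algo:z-xi}. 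With these checks in place the proposition follows.
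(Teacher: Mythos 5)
Your proposal is correct and follows essentially the same route as the paper: both arguments rest on the resolvent identity $\xi_{k+1}=\frac{1}{\gamma}(y_k-z_{k+1})-C(z_k)\in M(z_{k+1})$ and the substitution $y_{k-1}-z_k=\gamma\left(\xi_k+C(z_{k-1})\right)$ with $1-\frac{\alpha}{2(k+\alpha)}=\frac{2k+\alpha}{2(k+\alpha)}$, carried out in both directions. The paper merely phrases the bookkeeping as a direct rewriting (introducing the intermediate three-line scheme and then "simplifying") rather than as an explicit induction, so your write-up is, if anything, slightly more careful about the $k=1$ initialization.
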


\begin{proof}
Given $z_{0}, y_{0} \in \sH$ and $z_{1} := J_{\gamma M} \left( y_{0} - \gamma C \left( z_{0} \right) \right)$,
it follows that
\begin{equation*}
\Leftrightarrow \xi_{1} = \frac{1}{\gamma} \left( y_{0} - z_{1} \right) - C \left( z_{0} \right) \in M \left( z_{1} \right) .
\end{equation*}
In the same way, by invoking also the second update block of  \cref{algo:ffb}, we obtain
\begin{equation*}
\xi_{k+1} := \frac{1}{\gamma}\left( y_{k}-z_{k+1}\right) - C \left( z_{k} \right) \in M(z_{k+1}) \quad \forall k \geq 0.
\end{equation*}
Therefore, the iterative scheme in \cref{algo:ffb} can be equivalently written as
\begin{equation}\
(\forall k \geq 1) \quad \left\{
\begin{aligned}\label{ouralg1}	
y_{k} 		& := z_{k} + \left( 1 - \dfrac{\alpha}{k + \alpha} \right) \left( z_{k} - z_{k-1} \right) + \left( 1 - \frac{\alpha}{2(k+\alpha)} \right) \gamma \left(\xi_k +C \left( z_{k-1} \right)\right);  \\
z_{k+1} 	& :=  J_{\gamma M} \left( y_{k}-\gamma C \left( z_{k} \right)\right);\\
\xi_{k+1}	& := \frac{1}{\gamma}\left( y_{k}-z_{k+1}\right) - C \left( z_{k} \right) ,
\end{aligned}\right.
\end{equation}
which, after some simplifications, it transforms into \eqref{algo:z-xi}.

Conversely, starting from \eqref{algo:z-xi}, we can define a sequence $\left( y_{k} \right)_{k \geq 1}$ following the first update block in  \eqref{ouralg1}. Consequently, for every $k \geq 1$, the relation
\begin{equation}
	\gamma \left( \xi_{k+1} +C \left( z_{k} \right) \right) = y_k-z_{k+1} ,
\end{equation}
holds, leading to the transformation of \eqref{algo:z-xi} into \eqref{algo:z-y}.
\end{proof}

\begin{rmk}\label{rem23}
\begin{enumerate}
\item Remarkably, within the first update block of \eqref{algo:z-y}, we incorporate both a \emph{momentum term} denoted as ``$z_{k} - z_{k-1}$" and a \emph{correction term} denoted as ``$y_{k-1} - z_{k}$".  Nevertheless, in the forward-backward step, our method evaluates $C$ at $z_k$, thereby it can be seen as an extension of \cite{Moudafi2003}.  Notably, it differes significantly from other forward-backward schemes in the existing literature.

\item The apparent similarity between \cref{algo:ffb} and the CRIFBA method is misleading. While the initial update block may appear similar, a significant conceptual distinction arises due to our scheme evaluating $C$ at $z_k$, contrasting with the CRIFBA method's evaluation of $C$ at $y_k$. This disparity leads to a substantial difference in how the correction terms are defined, even when opting for $\rho=1$ in the CRFIBA method. Notably, this falls outside the scope of the convergence evidence established in \cite{Paul2021Emile}.
\end{enumerate}
\end{rmk}

The reformulation \eqref{algo:z-xi} reveals some connection with other Nesterov's acceleration-type methods in the literature. This inspires us to consider, for $z_* \in {\cal Z}$, the following discrete energy function $\E_{\eta,k}$ for $0 \leq \eta \leq \alpha-1$ and $k \geq 1$:
\begin{align}
	\E_{\eta,k}
	 := & \ \dfrac{1}{2} \left\lVert 2 \eta \left( z_{k} - z_{*} \right) + 2k \left( z_{k} - z_{k-1} \right) + \frac{5\alpha-2}{4(\alpha-1)}\gamma k \left(\xi_k +C \left( z_{k-1} \right)\right) \right\rVert ^{2} \nonumber\\
&	+ 2 \eta \left( \alpha - 1 - \eta \right) \left\lVert z_{k} - z_{*} \right\rVert ^{2}
	 + 2 \eta \gamma\left(\frac{3(\alpha-2)}{4(\alpha-1)}k+\alpha\right) \big\langle z_{k} - z_{*} , \xi_k +C \left( z_{k-1} \right) \big\rangle \nonumber\\
& + \frac{1}{2}\gamma^{2}\left(\frac{3(\alpha-2)}{4(\alpha-1)} k+\alpha\right)\left( \frac{5\alpha-2}{4(\alpha-1)} k + \alpha \right)  \left\lVert \xi_k +C \left( z_{k-1} \right)\right\rVert ^{2}\nonumber\\
=&\ 2 \eta \left( \alpha - 1 \right) \left\lVert z_{k} - z_{*} \right\rVert ^{2} + 4 \eta k \big\langle z_{k} - z_{*} , z_{k} - z_{k-1} + \gamma (\xi_k+C \left( z_{k-1} \right)) \big\rangle \nonumber \\
	& + 2 \eta \gamma \alpha\langle z_k-z_{*},\xi_k+C \left( z_{k-1} \right)\rangle+\dfrac{1}{2} k^{2} \left\lVert 2 \left( z_{k} - z_{k-1} \right) + \frac{5\alpha-2}{4(\alpha-1)}\gamma (\xi_k+C \left( z_{k-1} \right)) \right\rVert ^{2} \nonumber \\
&+ \frac{1}{2} \gamma^{2}\left(\frac{3(\alpha-2)}{4(\alpha-1)}k +\alpha\right) \left( \frac{5\alpha-2}{4(\alpha-1)} k + \alpha \right) \left\lVert \xi_k +C \left( z_{k-1} \right) \right\rVert ^{2} \label{im:defi:E-k:eq}.
\end{align}

In the upcoming lemma, we introduce a property of the discrete energy sequence $\left( \E_{\eta,k} \right) _{k \geq 1}$, crucial for establishing the convergence results. Its proof can be found in Appendix \ref{subseca2}.

\begin{lem}\label{lemma8}
Let $z_* \in {\cal Z}$, $\left(z_{k} \right)_{k \geq 0}$ be the sequence generated by Algorithm \ref{algo:ffb}, and  $\left(\xi_{k} \right)_{k \geq 1}$ the sequence generated by the iterative scheme \eqref{algo:z-xi}. For every $0 \leq \eta \leq \alpha-1$ and every $k \geq 1 $ it holds
\begin{align}
\E_{\eta,k+1} - \E_{\eta,k}
\leq \ & 2 \eta \gamma(2-\alpha)\left\langle z_{k+1} - z_{*} , \xi_{k+1}+C \left( z_{k} \right) \right\rangle+ 2 \gamma \left(\nu_{0}k +\nu_{1}\right) \left\langle z_{k+1} - z_{k} , \xi_{k+1}+C \left( z_{k} \right) \right\rangle \nonumber \\
& + 2\nu_{2} k \left\lVert z_{k+1} - z_{k} \right\rVert ^{2} +\frac{1}{2} \gamma^{2} (\nu_{3}k +\nu_{4}) \left\lVert \xi_{k+1}+C \left( z_{k} \right) \right\rVert ^{2} \nonumber \\
& - 2 \gamma\left(\frac{3(\alpha-2)}{4(\alpha-1)}k+\alpha\right) \left( k + \alpha \right) \left\langle z_{k+1} - z_{k} , \xi_{k+1}+C \left( z_{k} \right) - (\xi_{k}+C \left( z_{k-1} \right)) \right\rangle\nonumber\\
&- \gamma^{2}\left(\frac{3(\alpha-2)}{4(\alpha-1)}k+\alpha\right)(k+\alpha)  \left\lVert \xi_{k+1}+C \left( z_{k} \right) - (\xi_{k}+C \left( z_{k-1} \right)) \right\rVert ^{2},
\label{dec:inq}
\end{align}
where
\begin{align*}
	\nu_{0} 	& := \frac{3(\alpha-2)}{4(\alpha-1)}\eta+\frac{5\alpha-2}{4(\alpha-1)} \left( \eta + 1 - \alpha \right) -\frac{(4\alpha-1)(\alpha-2)}{4(\alpha-1)} \nonumber\\
	\nu_{1} 	& :=\alpha(\eta+1-\alpha)+(\eta-1)\alpha+\frac{(5\alpha-2)}{4(\alpha-1)}\nonumber\\
    \nu_{2}		& := 2 \left( \eta + 1 - \alpha \right)\leq0\nonumber\\
    \nu_{3}		& :=\frac{5\alpha-2}{2(\alpha-1)}(2-\alpha)< 0 \nonumber\\
    \nu_{4}		& :=\frac{4\alpha^2+\alpha-2}{2(\alpha-1)}-\alpha^2.
\end{align*}
\end{lem}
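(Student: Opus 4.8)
The plan is to prove the energy decrease estimate \eqref{dec:inq} by a direct, if lengthy, algebraic computation starting from the second form of the energy function $\E_{\eta,k}$ given in \eqref{im:defi:E-k:eq}, combined with the key substitution coming from the algorithm. The single most important algebraic fact I would exploit is that, by \cref{rmk:sub} (see \eqref{ouralg1} and the displayed identity right after it), we have for every $k \geq 1$
\begin{equation*}
\gamma\left(\xi_{k+1} + C(z_k)\right) = y_k - z_{k+1} = z_k - z_{k+1} + \left(1 - \frac{\alpha}{k+\alpha}\right)(z_k - z_{k-1}) + \frac{2k+\alpha}{2(k+\alpha)}\,\gamma\left(\xi_k + C(z_{k-1})\right),
\end{equation*}
which lets me express the ``new'' inertial quantity $z_{k+1}-z_k$ in terms of the ``old'' data and the residual terms $\xi_k + C(z_{k-1})$ and $\xi_{k+1}+C(z_k)$. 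I would write $a_k := z_k - z_*$, $v_k := z_k - z_{k-1}$ and $w_k := \xi_k + C(z_{k-1})$ to keep the bookkeeping manageable, so that $\E_{\eta,k}$ becomes a quadratic form in $(a_k, k v_k, k w_k)$ with coefficients depending on $\eta$ and $\alpha$, and the algorithm relation reads $\gamma w_{k+1} = -v_{k+1} + (1-\tfrac{\alpha}{k+\alpha})v_k + \tfrac{2k+\alpha}{2(k+\alpha)}\gamma w_k$ together with $a_{k+1} = a_k + v_{k+1}$.

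The sequence of steps I would carry out: (1) expand $\E_{\eta,k+1}$ using $a_{k+1}=a_k+v_{k+1}$ and the index shift $k \mapsto k+1$; (2) substitute the algorithm identity to eliminate $v_k$ in favour of $v_{k+1}$, $w_{k+1}$ and $w_k$ wherever the energy at step $k$ involves $v_k$ — this is what couples the two consecutive energies; (3) subtract $\E_{\eta,k}$ and collect terms by monomial type, grouping them into the five inner-product/norm structures that appear on the right-hand side of \eqref{dec:inq}, namely $\langle a_{k+1}, w_{k+1}\rangle$, $\langle v_{k+1}, w_{k+1}\rangle$, $\|v_{k+1}\|^2$, $\|w_{k+1}\|^2$, and the two ``difference'' terms $\langle v_{k+1}, w_{k+1} - w_k\rangle$ and $\|w_{k+1}-w_k\|^2$; (4) read off the resulting polynomial-in-$k$ coefficients and check that they match $2\eta\gamma(2-\alpha)$, $2\gamma(\nu_0 k + \nu_1)$, $2\nu_2 k$, $\tfrac12\gamma^2(\nu_3 k + \nu_4)$, and $-2\gamma(\tfrac{3(\alpha-2)}{4(\alpha-1)}k+\alpha)(k+\alpha)$, $-\gamma^2(\tfrac{3(\alpha-2)}{4(\alpha-1)}k+\alpha)(k+\alpha)$ respectively. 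Since the claimed inequality is in fact an identity (there is no slack thrown away — cocoercivity and monotonicity are not used here, only the algebra), the ``$\leq$'' should actually be ``$=$'', and the verification reduces to matching coefficients of $k^2$, $k^1$, $k^0$ on each side; the stated $\nu_i$ are precisely the outcome of that matching.

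The main obstacle is purely computational bookkeeping: the energy $\E_{\eta,k}$ is a sum of several quadratic terms with $k$- and $k^2$-weighted coefficients, so after the index shift and the substitution of the algorithm relation one obtains a large number of cross terms that must be reorganized very carefully into exactly the six target structures, with the right powers of $k$ in each coefficient. In particular, terms of order $k^2$ must cancel in the combinations that are supposed to produce only $O(k)$ coefficients — e.g. the $k^2$ part of $2\nu_2 k\|v_{k+1}\|^2$ must vanish, which is a genuine consistency check on the specific constants $\tfrac{\alpha}{k+\alpha}$, $\tfrac{2k+\alpha}{2(k+\alpha)}$, $\tfrac{5\alpha-2}{4(\alpha-1)}$ and $\tfrac{3(\alpha-2)}{4(\alpha-1)}$ appearing in the algorithm and the energy. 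I would organize this by treating $\tfrac{k}{k+\alpha}$ and related rational expressions symbolically, expanding each product $k\cdot\tfrac{k}{k+\alpha}$ etc. as $k - \tfrac{\alpha k}{k+\alpha}$ and carefully tracking which pieces are polynomial and which are the ``lower order'' rational remainders that get absorbed; a secondary obstacle is simply ensuring no sign error slips in when completing or un-completing the squares implicit in the definition of $\E_{\eta,k}$. I do not expect any conceptual difficulty beyond this — the sign constraints $\nu_2 \leq 0$ and $\nu_3 < 0$ recorded in the statement are the only places where the structure (rather than the identity) is used, and those follow immediately from $\eta \leq \alpha - 1$ and $\alpha > 2$.
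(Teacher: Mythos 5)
Your route is essentially the paper's: the proof there is also pure algebra, obtained by rewriting the second update of Algorithm \ref{algo:ffb} as the identity \eqref{dis:d-u} (your relation $\gamma w_{k+1}=-v_{k+1}+\bigl(1-\tfrac{\alpha}{k+\alpha}\bigr)v_k+\tfrac{2k+\alpha}{2(k+\alpha)}\gamma w_k$, multiplied by $2(k+\alpha)$), and then differencing the quadratic form \eqref{im:defi:E-k:eq}; the paper organizes the bookkeeping through the auxiliary vector $u_{\eta,k}=2\eta(z_k-z_*)+2k(z_k-z_{k-1})+\tfrac{5\alpha-2}{4(\alpha-1)}\gamma k(\xi_k+C(z_{k-1}))$ and the identity $\tfrac{1}{2}\bigl(\lVert u_{\eta,k+1}\rVert^2-\lVert u_{\eta,k}\rVert^2\bigr)=\langle u_{\eta,k+1},u_{\eta,k+1}-u_{\eta,k}\rangle-\tfrac{1}{2}\lVert u_{\eta,k+1}-u_{\eta,k}\rVert^2$, but that is the same computation you describe, and you are right that neither monotonicity nor cocoercivity is used here.

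There is, however, one genuine flaw: your claim that \eqref{dec:inq} is an exact identity with ``no slack thrown away'' is false, and your step (4), as written, would fail at exactly one coefficient. Carrying the expansion out exactly, the coefficient of $\lVert z_{k+1}-z_k\rVert^2$ is $2(\eta+1-\alpha)(2k+\alpha+1)$ (the term $2(\eta+1-\alpha)(2k+\alpha+1-\eta)$ from the paper's \eqref{dec:dif:u-lambda} plus the contribution $2\eta(\eta+1-\alpha)$ from \eqref{dec:dif:norm}), not $2\nu_2 k=4(\eta+1-\alpha)k$. The two differ by $2(\eta+1-\alpha)(\alpha+1)\lVert z_{k+1}-z_k\rVert^2$, which is nonzero whenever $\eta<\alpha-1$; the stated bound is obtained precisely by discarding this nonpositive remainder, and this is where the hypothesis $0\leq\eta\leq\alpha-1$ is used \emph{inside} the proof of this lemma, not merely ``recorded'' for later use as you suggest. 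The repair is immediate — bound $2(\eta+1-\alpha)(2k+\alpha+1)\leq 4(\eta+1-\alpha)k$ using $\eta+1-\alpha\leq 0$ — but it must be made explicit, since otherwise your coefficient matching cannot close. All the other coefficients (those of $\langle z_{k+1}-z_*,\xi_{k+1}+C(z_k)\rangle$, $\langle z_{k+1}-z_k,\xi_{k+1}+C(z_k)\rangle$, $\lVert\xi_{k+1}+C(z_k)\rVert^2$ and the two difference terms) do match exactly, so the remainder of your plan goes through as intended.
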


Further, for $0<\varepsilon<1$ and $0 \leq \eta \leq \alpha-1$ we consider the following regularization of the discrete energy function, defined for every $k \geq 1$ by
\begin{align}
	\F_{\eta,k}
:=& \ \E_{\eta,k}-2 \gamma\left(\frac{3(\alpha-2)}{4(\alpha-1)}k+\alpha \right)k   \left\langle z_{k} - z_{k-1} , C \left( z_{k} \right) - C \left( z_{k-1} \right)\right\rangle
\nonumber\\
&+\frac{1}{2}\gamma^{2}\left(\frac{3(\alpha-2)}{4(\alpha-1)}k+\alpha \right)\left((2-\varepsilon)(2k+\alpha)+\alpha\sqrt{\frac{3(\alpha-2)}{4(\alpha-1)}k+\alpha} \right)  \left\lVert C \left( z_{k} \right)-C \left( z_{k-1} \right) \right\rVert^2\nonumber\\
= & \ \dfrac{1}{2} \left\lVert 2 \eta \left( z_{k} - z_{*} \right) + 2k \left( z_{k} - z_{k-1} \right) + \frac{5\alpha-2}{4(\alpha-1)}\gamma k \left(\xi_k +C \left( z_{k-1} \right)\right) \right\rVert ^{2} \nonumber\\
&	+ 2 \eta \left( \alpha - 1 - \eta \right) \left\lVert z_{k} - z_{*} \right\rVert ^{2}
	 + 2 \eta \gamma\left(\frac{3(\alpha-2)}{4(\alpha-1)}k+\alpha\right) \big\langle z_{k} - z_{*} , \xi_k +C \left( z_{k-1} \right) \big\rangle \nonumber\\
& + \frac{1}{2}\gamma^{2}\left(\frac{3(\alpha-2)}{4(\alpha-1)}k+\alpha\right)\left( \frac{5\alpha-2}{4(\alpha-1)} k + \alpha \right)  \left\lVert \xi_k +C \left( z_{k-1} \right) \right\rVert ^{2}\nonumber\\
&-2 \gamma\left(\frac{3(\alpha-2)}{4(\alpha-1)}k+\alpha\right)k  \left\langle z_{k} - z_{k-1} , C \left( z_{k} \right) - C \left( z_{k-1} \right)\right\rangle
\nonumber\\
&+\frac{1}{2}\gamma^{2}\left(\frac{3(\alpha-2)}{4(\alpha-1)}k+\alpha\right)\left((2-\varepsilon)(2k+\alpha)+\alpha\sqrt{\frac{3(\alpha-2)}{4(\alpha-1)}k+\alpha} \right)  \left\lVert C \left( z_{k} \right)-C \left( z_{k-1} \right) \right\rVert^2.
\label{def:G}
\end{align}
The properties of the regularized discrete energy sequence $\left( \F_{\eta,k} \right)_{k \geq 0}$ are presented in \cref{lemma18}, the proof of which is also provided in Appendix \ref{subseca2}.

\begin{lem}\label{lemma18}
Let $z_* \in {\cal Z}$, $\left(z_{k} \right)_{k \geq 0}$ be the sequence generated by Algorithm \ref{algo:ffb}, and  $\left(\xi_{k} \right)_{k \geq 1}$ the sequence generated by the iterative scheme \eqref{algo:z-xi}. For every $0 \leq \eta \leq \alpha-1$ and every $0<\varepsilon<1$, the following statements are true:
\begin{enumerate}
\item 
for every $k\geq 1$, it holds:
\begin{align}
\F_{\eta,k+1} - \F_{\eta,k}
&\,\,\leq\frac{4(\alpha -2)(\alpha-1)}{3(k+1)\sqrt{k+1}}\eta^2\left\lVert z_{k+1}-z_{*} \right\rVert^2-2 \eta \gamma(\alpha-2)\left\langle z_{k+1} - z_{*} , \xi_{k+1}+C \left( z_{k+1} \right) \right\rangle\nonumber\\
&\quad+ \frac{1}{2}\gamma^{2}\left(\nu_{3} k+\alpha\sqrt{\nu_{5} k+\alpha } +\nu_{4}\right) \left\lVert \xi_{k+1}+C \left( z_{k} \right) \right\rVert^2 + 2\nu_{2} k \left\lVert z_{k+1} - z_{k} \right\rVert ^{2}\nonumber\\
&\quad-\omega_{k} \|C \left( z_{k+1} \right)-C \left( z_{k} \right)\|^2+ 2 \gamma \left(\nu_{0}k +\nu_{1}\right) \left\langle z_{k+1} - z_{k} , \xi_{k+1}+C \left( z_{k} \right) \right\rangle \nonumber \\
&\quad - \gamma^{2}\left(\nu_{5} k+\alpha\right)\left(\nu_{6} k+\nu_{7}\right)\|\xi_{k+1}+C \left( z_{k} \right) - (\xi_{k}+C \left( z_{k-1} \right))\|^2,\label{bbck2}
\end{align}
where
\begin{align*}
\nu_{5} 	:= \frac{3}{4(\alpha-1)}(\alpha-2) >0, \quad
\nu_{6}     := \frac{1}{2-\varepsilon}(1-\varepsilon)>0, \quad
\nu_{7}	:= \frac{1}{2(2-\varepsilon)}(3-2\varepsilon) \alpha>0,
\end{align*}
and
\begin{align}
\omega_{k} 	& := \left(2\beta-(2-\varepsilon) \gamma \right)\frac{3(\alpha -2)}{4(\alpha-1)}\gamma (k+1)^2-\frac{3(\alpha-2)}{4(\alpha-1)}\gamma^{2}(k+1)\sqrt{k+1}\nonumber\\
&\quad \,\,-\frac{1}{2}\alpha \gamma^{2}\left(\frac{3(\alpha-2)}{4(\alpha-1)}(k+1)+\alpha\right)\sqrt{\frac{3(\alpha-2)}{4(\alpha-1)}(k+1)+\alpha}-\frac{1}{2}(2-\varepsilon)\gamma^{2}\alpha^2\nonumber\\
&\quad \,\,+\left(2\beta-(2-\varepsilon)\frac{11\alpha-14}{8(\alpha-1)} \gamma \right)\alpha \gamma (k+1). \label{defi:mu-k}
\end{align}
\item 
for every $k\geq 1$, one has the following lower bound for the quantity $\F_{\eta,k}$
\begin{align}
\F_{\eta,k}
\geq &\ \frac{3(\alpha-2)}{4(5\alpha-2)} \left\lVert 4 \eta \left( z_{k} - z_{*} \right)+ 2k \left( z_{k} - z_{k-1} \right) + \frac{5\alpha-2}{2(\alpha-1)}\gamma k \left(\xi_k +C \left( z_{k-1} \right)\right) \right\rVert ^{2} \nonumber\\
&	+ 2 \eta(\alpha-1) \left( 1 - \frac{8\eta}{5\alpha-2} \right) \left\lVert z_{k} - z_{*} \right\rVert ^{2}
+ \frac{1}{2} \gamma^{2}\alpha (2k+\alpha) \left\lVert \xi_k +C \left( z_{k-1} \right) \right\rVert ^{2}\nonumber\\
&+\left(3(\alpha-2)\left(\frac{1}{5\alpha-2}-\frac{1}{4(2-\varepsilon)(\alpha-1)}\right) k^2-\frac{2}{\beta}\alpha \gamma k\right) \left\lVert z_k-z_{k-1} \right\rVert ^{2}\nonumber\\
&+2\eta \alpha \gamma \big\langle z_{k} - z_{*} , \xi_k +C \left( z_{k-1} \right) \big\rangle.
\label{G:inequa}
\end{align}
\end{enumerate}
\end{lem}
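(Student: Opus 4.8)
The plan is to establish \cref{lemma18} in two stages, treating the two items separately but both resting on the computational engine provided by \cref{lemma8}.

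\textbf{Item (i): the decay estimate \eqref{bbck2}.} The starting point is the inequality \eqref{dec:inq} for $\E_{\eta,k+1}-\E_{\eta,k}$. The regularized function $\F_{\eta,k}$ differs from $\E_{\eta,k}$ by the two correction terms involving $\langle z_k-z_{k-1},C(z_k)-C(z_{k-1})\rangle$ and $\|C(z_k)-C(z_{k-1})\|^2$, so I would compute $\F_{\eta,k+1}-\F_{\eta,k}$ by adding to \eqref{dec:inq} the telescoped differences of these correction terms. Next I would absorb the $\eta$-linear inner product term $2\eta\gamma(2-\alpha)\langle z_{k+1}-z_*,\xi_{k+1}+C(z_k)\rangle$ on the right-hand side of \eqref{dec:inq}: using $\xi_{k+1}\in M(z_{k+1})$ (from \eqref{algo:inc-M}) together with monotonicity of $M$ and cocoercivity of $C$, I would rewrite $C(z_k)=C(z_{k+1})+(C(z_k)-C(z_{k+1}))$, producing the target term $-2\eta\gamma(\alpha-2)\langle z_{k+1}-z_*,\xi_{k+1}+C(z_{k+1})\rangle$, a leftover inner product with $C(z_k)-C(z_{k+1})$, and — crucially — the anomalous positive term $\tfrac{4(\alpha-2)(\alpha-1)}{3(k+1)\sqrt{k+1}}\eta^2\|z_{k+1}-z_*\|^2$. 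This last term is what forces the $\sqrt{k}$-type corrections in $\F$; it should arise from a Young/Cauchy–Schwarz split of a cross term of the form $\eta\cdot(\text{something})\cdot\|C(z_{k+1})-C(z_k)\|$ against $\eta\|z_{k+1}-z_*\|^2$, with the ``something'' tuned to give the stated $(k+1)^{3/2}$ denominator, so that the remaining negative multiple of $\|C(z_{k+1})-C(z_k)\|^2$ can be merged into $\omega_k$. The bookkeeping then consists of collecting all coefficients of $\|\xi_{k+1}+C(z_k)\|^2$ into $\tfrac12\gamma^2(\nu_3 k+\alpha\sqrt{\nu_5 k+\alpha}+\nu_4)$, all coefficients of $\|\xi_{k+1}+C(z_k)-(\xi_k+C(z_{k-1}))\|^2$ into $-\gamma^2(\nu_5 k+\alpha)(\nu_6 k+\nu_7)$, and everything multiplying $\|C(z_{k+1})-C(z_k)\|^2$ into $-\omega_k$, checking that the definitions of $\nu_5,\nu_6,\nu_7$ in the statement make the $(\nu_5 k+\alpha)(\nu_6 k+\nu_7)$ factorization exact, and that $\omega_k$ reduces to \eqref{defi:mu-k} after using $\beta$-cocoercivity to convert $\langle z_{k+1}-z_k,C(z_{k+1})-C(z_k)\rangle \geq \beta\|C(z_{k+1})-C(z_k)\|^2$ wherever the correction terms re-expand across the index shift.

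\textbf{Item (ii): the lower bound \eqref{G:inequa}.} Here I would work directly from the second expression for $\F_{\eta,k}$ in \eqref{def:G}. The idea is to regroup the squared-norm block $\tfrac12\|2\eta(z_k-z_*)+2k(z_k-z_{k-1})+\tfrac{5\alpha-2}{4(\alpha-1)}\gamma k(\xi_k+C(z_{k-1}))\|^2$ by pulling out a factor so that it matches $\tfrac{3(\alpha-2)}{4(5\alpha-2)}\|4\eta(z_k-z_*)+2k(z_k-z_{k-1})+\tfrac{5\alpha-2}{2(\alpha-1)}\gamma k(\xi_k+C(z_{k-1}))\|^2$ plus remainders; expanding both squares and comparing coefficients of $\|z_k-z_*\|^2$, $\|z_k-z_{k-1}\|^2$, $\|\xi_k+C(z_{k-1})\|^2$ and the three cross terms isolates exactly which residual quadratic form must be shown nonnegative. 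The $\|z_k-z_*\|^2$ coefficient should collapse to $2\eta(\alpha-1)(1-\tfrac{8\eta}{5\alpha-2})$, the cross term $\langle z_k-z_*,\xi_k+C(z_{k-1})\rangle$ to $2\eta\alpha\gamma$, and the $\|\xi_k+C(z_{k-1})\|^2$ coefficient to $\tfrac12\gamma^2\alpha(2k+\alpha)$. The leftover terms — the correction term $-2\gamma(\tfrac{3(\alpha-2)}{4(\alpha-1)}k+\alpha)k\langle z_k-z_{k-1},C(z_k)-C(z_{k-1})\rangle$ and the positive $\|C(z_k)-C(z_{k-1})\|^2$ term — I would bound using Cauchy–Schwarz and $\beta$-cocoercivity: $|\langle z_k-z_{k-1},C(z_k)-C(z_{k-1})\rangle|\leq \|z_k-z_{k-1}\|\cdot\beta^{-1}\|z_k-z_{k-1}\|$ via $\|C(z_k)-C(z_{k-1})\|\leq \beta^{-1}\|z_k-z_{k-1}\|$, which generates the $-\tfrac{2}{\beta}\alpha\gamma k\|z_k-z_{k-1}\|^2$ term and, together with the surviving fraction of the $k^2$-coefficient on $\|z_k-z_{k-1}\|^2$, yields the factor $3(\alpha-2)(\tfrac{1}{5\alpha-2}-\tfrac{1}{4(2-\varepsilon)(\alpha-1)})k^2-\tfrac{2}{\beta}\alpha\gamma k$ in \eqref{G:inequa}.

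\textbf{Anticipated main obstacle.} I expect the genuine difficulty to be entirely in item (i), specifically in the precise Young-inequality split that manufactures the $\eta^2(k+1)^{-3/2}\|z_{k+1}-z_*\|^2$ term while leaving behind exactly enough negativity to keep $\omega_k\geq 0$ (for $k$ large, given $\gamma$ small relative to $2\beta$) and to keep the coefficient of $\|\xi_{k+1}+C(z_k)-(\xi_k+C(z_{k-1}))\|^2$ a perfect product $(\nu_5 k+\alpha)(\nu_6 k+\nu_7)$. Getting the $\varepsilon$-dependence to land consistently across $\nu_6,\nu_7,\omega_k$ requires choosing the Young parameter as a specific function of $\varepsilon$ and $\alpha$, and a single miscalibration there propagates into all three collected coefficients; everything else is careful but routine algebra of the kind already carried out in the proof of \cref{lemma8}.
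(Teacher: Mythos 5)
Your plan for item (i) follows the paper's own route: start from \eqref{dec:inq}, add the telescoped differences of the two correction terms in $\F_{\eta,k}$, split $C(z_k)=C(z_{k+1})+\bigl(C(z_k)-C(z_{k+1})\bigr)$ and apply Young with a $(k+1)^{3/2}$ weight to create the $\eta^2(k+1)^{-3/2}\lVert z_{k+1}-z_*\rVert^2$ term, and then treat $-2\gamma\left(\nu_5 k+\alpha\right)(k+\alpha)\langle z_{k+1}-z_k,\xi_{k+1}+C(z_k)-(\xi_k+C(z_{k-1}))\rangle$ by discarding $\langle z_{k+1}-z_k,\xi_{k+1}-\xi_k\rangle\ge 0$ (monotonicity of $M$), substituting the update identity \eqref{dis:d-u} for $2(k+\alpha)(z_{k+1}-z_k)$, inserting the shifted correction term via cocoercivity, and performing two further Young estimates (with the $\sqrt{\nu_5 k+\alpha}$- and $(2-\varepsilon)$-tuned parameters) to assemble $\omega_k$, $\nu_6$, $\nu_7$; you gloss over the role of \eqref{dis:d-u}, but the approach and the anticipated difficulty are exactly those of the paper.

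In item (ii), however, your stated mechanism for the $\lVert z_k-z_{k-1}\rVert^2$ coefficient would not deliver \eqref{G:inequa}. The regrouping of the two squared norms produces $+\frac{3(\alpha-2)}{5\alpha-2}k^2\lVert z_k-z_{k-1}\rVert^2$, and the loss $-\frac{3(\alpha-2)}{4(2-\varepsilon)(\alpha-1)}k^2\lVert z_k-z_{k-1}\rVert^2$ appearing in the claimed coefficient does \emph{not} come from the Lipschitz bound $\lVert C(z_k)-C(z_{k-1})\rVert\le\beta^{-1}\lVert z_k-z_{k-1}\rVert$: in the paper it comes from pairing only the $k^2$-part of the cross term, $-\frac{3(\alpha-2)}{2(\alpha-1)}\gamma k^2\langle z_k-z_{k-1},C(z_k)-C(z_{k-1})\rangle$, with the $(2-\varepsilon)$-weighted portion of the positive $\lVert C(z_k)-C(z_{k-1})\rVert^2$ correction term and completing a square (a Young estimate with parameter $(2-\varepsilon)\gamma$, hence $\gamma$- and $\beta$-free), while Cauchy--Schwarz plus Lipschitz continuity is applied only to the remaining $-2\alpha\gamma k\langle z_k-z_{k-1},C(z_k)-C(z_{k-1})\rangle$, which is what yields $-\frac{2}{\beta}\alpha\gamma k\lVert z_k-z_{k-1}\rVert^2$. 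If, as written, you bound the \emph{whole} cross term by Cauchy--Schwarz and the Lipschitz estimate, the $k^2$-coefficient you obtain is $\frac{3(\alpha-2)}{5\alpha-2}-\frac{3(\alpha-2)\gamma}{2(\alpha-1)\beta}$, which is strictly smaller than the claimed $3(\alpha-2)\bigl(\frac{1}{5\alpha-2}-\frac{1}{4(2-\varepsilon)(\alpha-1)}\bigr)$ whenever $\gamma>\frac{\beta}{2(2-\varepsilon)}$, a regime allowed by the standing step-size condition $\gamma<\frac{2}{2-\varepsilon}\beta$; so the resulting lower bound neither implies \eqref{G:inequa} nor supports the subsequent sign condition in \eqref{dec:mu} without shrinking the admissible step size. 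With the $\varepsilon$-calibrated completion of square in place of the blanket Lipschitz bound, the rest of your coefficient-matching argument for (ii) goes through as in the paper.
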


The last of the three technical lemmas describe the intervals where the parameters that appear in the formulation of the algorithm and in the definition of the discrete energy function can be chosen in order to obtain convergence for the iterates and to derive convergence rates.  Its proof is also provided in Appendix \ref{subseca2}.

\begin{lem}\label{lemm2:3}
Let $z_* \in {\cal Z}$, $\left(z_{k} \right)_{k \geq 0}$ be the sequence generated by Algorithm \ref{algo:ffb}, and  $\left(\xi_{k} \right)_{k \geq 1}$ the sequence generated by the iterative scheme \eqref{algo:z-xi}. The following statements are true:
\begin{enumerate}
\item 
\label{lemm2:3:S_k}
if $0 < \gamma< \frac{2}{2-\varepsilon} \beta $, $0<\varepsilon<\frac{3(\alpha-2)}{4(\alpha-1)}$ and $0<\eta<\frac{5\alpha-2}{8}$, then there exists an integer $K_{0}(\eta) \geq1$ such that for every $k\geq K_{0}(\eta)$
\begin{equation}\label{dec:mu}
\omega_{k} \geq \left(2\beta-(2-\varepsilon) \gamma \right)\frac{3(\alpha -2)}{8(\alpha-1)}\gamma (k+1)^2, \quad \quad \frac{3}{2}(\alpha-2)\left(\frac{1}{5\alpha-2}-\frac{1}{4(2-\varepsilon)(\alpha-1)}\right) k^2-\frac{2}{\beta}\alpha \gamma k\geq 0,
\end{equation}
and
\begin{equation}\label{def:S0}
S_k:=\eta(\alpha-1)\left(1-\frac{8}{5\alpha-2}\eta\right)\left\lVert z_k-z_* \right\rVert^2+2\eta \alpha \gamma\langle z_k-z_*,\xi_k+C \left( z_{k-1} \right)\rangle+\frac{1}{2}\alpha \gamma^{2} k\left\lVert \xi_k+C \left( z_{k-1} \right) \right\rVert^2 \geq 0,
\end{equation}
which yields, according to \eqref{G:inequa},
\begin{equation}\label{def:S}
\F_{\eta,k} \geq \eta(\alpha-1)\left(1-\frac{8}{5\alpha-2}\eta\right)\left\lVert z_k-z_* \right\rVert^2 \geq 0 ;
\end{equation}

\item 
\label{lemm2:3:R_k}
there exist two parameters
\begin{equation}\label{dec:lambda}
0\leq \underline{\eta}(\alpha)<\overline{\eta}(\alpha)\leq \frac{5\alpha-2}{8}
\end{equation}
such that for every $\eta$ satisfying $\underline{\eta}(\alpha)<\eta<\overline{\eta}(\alpha)$ one can find an integer $K_{1}(\eta)\geq 1$ with the property that the following inequality holds for every $k\geq K_{1}(\eta)$
\begin{align}\label{dec:R}
R_k:= & \ 2\sqrt{\frac{9\alpha-2}{2(5\alpha-2)}}\nu_{2} k \left\lVert z_{k+1} - z_{k} \right\rVert ^{2}+ 2 \gamma \left(\nu_{0}k +\nu_{1}\right) \left\langle z_{k+1} - z_{k} , \xi_{k+1}+C \left( z_{k} \right) \right\rangle \nonumber \\
& \ + \frac{1}{2}\sqrt{\frac{9\alpha-2}{2(5\alpha-2)}}\gamma^{2}\left(\nu_{3} k+\alpha \sqrt{\nu_{5}k+\alpha}+\nu_{4} \right) \left\lVert \xi_{k+1}+C \left( z_{k} \right) \right\rVert^2 \leq0;
\end{align}
\end{enumerate}
\end{lem}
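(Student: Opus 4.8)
\textbf{Proof plan for \cref{lemm2:3}.}

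The plan is to treat the two parts essentially independently, since part (i) concerns the sign of explicit polynomial/irrational expressions in $k$, while part (ii) requires a genuine quadratic-form argument in the two ``variables'' $\|z_{k+1}-z_k\|$ and $\|\xi_{k+1}+C(z_k)\|$.

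For part (i), the key observation is that $\omega_k$ in \eqref{defi:mu-k} is a sum of a leading term $\big(2\beta-(2-\varepsilon)\gamma\big)\tfrac{3(\alpha-2)}{4(\alpha-1)}\gamma(k+1)^2$, which is strictly positive of order $k^2$ precisely because $0<\gamma<\tfrac{2}{2-\varepsilon}\beta$ forces $2\beta-(2-\varepsilon)\gamma>0$ and $\alpha>2$, plus lower-order corrections of order $k\sqrt{k}$, $k$ and a constant. Thus I would factor out $(k+1)^2$, write the remaining terms as $o(1)$ as $k\to+\infty$, and conclude that for $k$ large enough the lower-order part is absorbed into half of the leading term; this yields the first inequality in \eqref{dec:mu} with an explicit (but unspecified) threshold $K_0(\eta)$. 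The second inequality in \eqref{dec:mu} is the same kind of reasoning, now comparing a positive $k^2$-term against a negative $k$-term, using that $\tfrac{1}{5\alpha-2}-\tfrac{1}{4(2-\varepsilon)(\alpha-1)}>0$ under the hypothesis $\varepsilon<\tfrac{3(\alpha-2)}{4(\alpha-1)}$ (which is exactly equivalent to $4(2-\varepsilon)(\alpha-1)>5\alpha-2$). For \eqref{def:S0}, I would view $S_k$ as a quadratic form in the pair $\big(\|z_k-z_*\|,\ \|\xi_k+C(z_{k-1})\|\big)$ with cross term $2\eta\alpha\gamma\langle z_k-z_*,\xi_k+C(z_{k-1})\rangle$ bounded below by $-2\eta\alpha\gamma\|z_k-z_*\|\,\|\xi_k+C(z_{k-1})\|$; nonnegativity then follows from the discriminant condition, i.e. the product of the two diagonal coefficients $\eta(\alpha-1)(1-\tfrac{8}{5\alpha-2}\eta)$ and $\tfrac12\alpha\gamma^2 k$ dominating $(\eta\alpha\gamma)^2$, which holds for $k$ large because $0<\eta<\tfrac{5\alpha-2}{8}$ makes the first factor strictly positive. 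Combining with the first two inequalities and dropping nonnegative terms in \eqref{G:inequa} gives \eqref{def:S}.

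For part (ii), the crucial point is that $R_k$ in \eqref{dec:R} is, up to the fixed positive prefactor $\sqrt{\tfrac{9\alpha-2}{2(5\alpha-2)}}$ on the pure-square terms, a quadratic form in $u:=\|z_{k+1}-z_k\|$ and $v:=\|\xi_{k+1}+C(z_k)\|$ of the shape $a_k u^2 + b_k uv + c_k v^2$ with $a_k=2\sqrt{\tfrac{9\alpha-2}{2(5\alpha-2)}}\nu_2 k<0$ (since $\nu_2\le0$) and $c_k=\tfrac12\sqrt{\tfrac{9\alpha-2}{2(5\alpha-2)}}\gamma^2(\nu_3 k+\alpha\sqrt{\nu_5 k+\alpha}+\nu_4)<0$ for large $k$ (since $\nu_3<0$). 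The inner product term is controlled by $|b_k|\le 2\gamma(|\nu_0|k+|\nu_1|)$, so negative semidefiniteness for large $k$ is equivalent to $b_k^2\le 4a_kc_k$, i.e. to a comparison of the leading coefficients: $4\gamma^2\nu_0^2 \le 4\cdot 2\sqrt{\tfrac{9\alpha-2}{2(5\alpha-2)}}\nu_2\cdot\tfrac12\sqrt{\tfrac{9\alpha-2}{2(5\alpha-2)}}\gamma^2\nu_3$ at the $k^2$ level, which simplifies to an inequality purely in $\alpha$ and $\eta$. The plan is therefore: first, expand $\nu_0,\nu_2,\nu_3$ (the latter two in closed form, $\nu_0$ linear in $\eta$), reduce the $k^2$-coefficient comparison to a quadratic inequality $P(\eta,\alpha)\le 0$; second, show this inequality in $\eta$ has a nonempty solution interval $\big(\underline\eta(\alpha),\overline\eta(\alpha)\big)$ that lies inside $[0,\tfrac{5\alpha-2}{8}]$ — this is where one must check $\underline\eta(\alpha)<\overline\eta(\alpha)$ and $\overline\eta(\alpha)\le\tfrac{5\alpha-2}{8}$, presumably by evaluating the discriminant of $P(\cdot,\alpha)$ and the sign of $P$ at the endpoints; third, for any such $\eta$, pick $K_1(\eta)$ large enough that the strict $k^2$-level inequality swamps the $O(k)$ and $O(\sqrt{k})$ lower-order terms in the discriminant $4a_kc_k-b_k^2$, so that $R_k\le 0$ for all $k\ge K_1(\eta)$.

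I expect the main obstacle to be the endpoint bookkeeping in part (ii): verifying that the quadratic inequality $P(\eta,\alpha)\le 0$ genuinely has solutions and that the resulting interval $\big(\underline\eta(\alpha),\overline\eta(\alpha)\big)$ is nonempty and fits inside $\big(0,\tfrac{5\alpha-2}{8}\big)$ for all $\alpha>2$. Unlike part (i), which is ``soft'' asymptotics, this forces one to actually compute the discriminant of a quadratic in $\eta$ whose coefficients are themselves rational functions of $\alpha$, and to argue positivity/negativity across the whole range $\alpha>2$; the constant $\tfrac{9\alpha-2}{2(5\alpha-2)}$ and the fraction $\tfrac{5\alpha-2}{4(\alpha-1)}$ appearing throughout suggest these were reverse-engineered so that the discriminant factors nicely, but confirming this cleanly is the delicate part. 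A secondary, purely technical nuisance is keeping track of which threshold ($K_0$ or $K_1$) each asymptotic absorption needs, and that all thresholds depend only on $(\alpha,\gamma,\varepsilon,\eta,\beta)$ and not on the trajectory.
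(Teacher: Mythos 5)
Your plan coincides with the paper's own proof: part (i) is handled there exactly as you describe (leading-order absorption for $\omega_{k}$ and for the $k^{2}$-versus-$k$ expression, plus the discriminant criterion of \cref{lem111} applied to $S_{k}$ viewed as a quadratic form, where $0<\eta<\frac{5\alpha-2}{8}$ makes the $k$-coefficient of the discriminant negative), and part (ii) is reduced, just as you propose, to the leading-coefficient condition $\nu_{0}^{2}-\frac{9\alpha-2}{2(5\alpha-2)}\nu_{2}\nu_{3}<0$ together with a threshold $K_{1}(\eta)$ absorbing the $O(k\sqrt{k})$ and lower-order terms. The endpoint bookkeeping you defer does work out as you anticipated: with the substitution $\zeta=\eta+1-\alpha$ the condition becomes $4\zeta^{2}+\frac{(\alpha-2)(7\alpha-6)}{2(\alpha-1)}\zeta+\frac{(\alpha+2)^{2}(\alpha-2)^{2}}{16(\alpha-1)^{2}}<0$, whose discriminant factors as $\frac{(\alpha-2)^{2}(9\alpha-2)(5\alpha-10)}{4(\alpha-1)^{2}}>0$ for $\alpha>2$, and the corresponding root interval indeed meets $\left(0,\frac{5\alpha-2}{8}\right)$, which is how the paper defines $\underline{\eta}(\alpha)<\overline{\eta}(\alpha)$.
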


Following the previous preparatory results, we assume that  the step size $\gamma$ satisfies
\begin{mdframed}
$$0 < \gamma < \frac{8(\alpha-1)}{5\alpha-2}\beta.$$
\end{mdframed}
Therefore, 
\begin{equation}\label{epsilon}
\mbox{there exists} \ 0<\varepsilon<\frac{3(\alpha-2)}{4(\alpha-1)}<1 \ \mbox{such that} \ 0 < \gamma< \frac{2}{2-\varepsilon} \beta.
\end{equation}
Given that $\frac{8(\alpha-1)}{5\alpha-2}\beta < 2 \beta$, the condition on the step size is somewhat more restrictive than in the classical forward-backward setting, which typically permits $0 < \gamma < 2 \beta$. However, despite this, our methods demonstrate a superior theoretical rate of convergence when compared to the classical forward-backward algorithm.

\begin{prop}\label{prop:lim}
Let $z_* \in {\cal Z}$, $\left(z_{k} \right)_{k \geq 0}$ be the sequence generated by Algorithm \ref{algo:ffb}, and  $\left(\xi_{k} \right)_{k \geq 1}$ the sequence generated by the iterative scheme \eqref{algo:z-xi}. Then the following statements are true:
\begin{enumerate}
\item 
\label{prop:lim:sum}
it holds 
\begin{subequations}
\label{rate:sum}
\begin{align}		
\mysum_{k \geq 1}  \left\langle z_{k} - z_{*} , \xi_{k}+C \left( z_{k} \right) \right\rangle < + \infty ,  \\
\mysum_{k \geq 1} k^2 \left\lVert C \left( z_{k} \right) - C \left( z_{k-1} \right) \right\rVert ^{2} < + \infty , \\
\mysum_{k \geq 1} k^2 \left\lVert \xi_{k+1}+C \left( z_{k} \right) - \left(\xi_k+C \left( z_{k-1} \right)\right) \right\rVert ^{2} < + \infty , \\
\mysum_{k \geq 1} k \left\lVert z_{k+1} - z_{k} \right\rVert ^{2} < + \infty , \\
\mysum_{k \geq 1} k \left\lVert \xi_{k+1}+C \left( z_{k} \right) \right\rVert ^{2} < + \infty;
\end{align}
\end{subequations}

\item 
\label{prop:lim:rate}
the sequence $(z_k)_{k\geq 0}$ is bounded and 
\begin{align*}
\left\lVert z_k-z_{k-1} \right\rVert = \bO \left( \dfrac{1}{k } \right),\,\quad  \left\lVert \xi_k+C \left( z_{k} \right) \right\rVert = \bO \left( \dfrac{1}{k} \right),
\,\,\quad
\left\langle z_{k} - z_{*} , \xi_k+C \left( z_{k} \right)\right\rangle = \bO \left( \dfrac{1}{k } \right) \quad \mbox{as} \ k \rightarrow + \infty;
\end{align*}

\item 
\label{prop:lim:conv} for every $\eta \in \left( \underline{\eta}(\alpha), \overline{\eta}(\alpha) \right)$, where  $\underline{\eta}(\alpha)$ and $\overline{\eta}(\alpha)$ are the parameters provided by Lemma \ref{lemm2:3} (ii), the sequences $(\E_{\eta,k})_{k\geq 1}$ and $(\F_{\eta,k})_{k\geq 1}$ are convergent and have the same limit.
\end{enumerate}
\end{prop}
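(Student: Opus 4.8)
The plan is to combine the two Lyapunov-type estimates from \cref{lemma18} with the sign information on the auxiliary quantities $R_k$ and $S_k$ from \cref{lemm2:3}, in order to turn the descent inequality \eqref{bbck2} into a genuine Fejér-type / quasi-Fejér argument. First I would fix $\eta\in(\underline\eta(\alpha),\overline\eta(\alpha))$ and $\varepsilon$ as in \eqref{epsilon}, so that both Lemma \ref{lemm2:3}(i) and (ii) apply; let $K:=\max\{K_0(\eta),K_1(\eta)\}$. For $k\ge K$ I would bound the right-hand side of \eqref{bbck2} from above: the first term $\tfrac{4(\alpha-2)(\alpha-1)}{3(k+1)^{3/2}}\eta^2\|z_{k+1}-z_*\|^2$ is controlled using \eqref{def:S}, which gives $\|z_{k+1}-z_*\|^2\le \big(\eta(\alpha-1)(1-\tfrac{8}{5\alpha-2}\eta)\big)^{-1}\F_{\eta,k+1}$, so that term is $\le c\,k^{-3/2}\F_{\eta,k+1}$ for a constant $c>0$; the cross term $-2\eta\gamma(\alpha-2)\langle z_{k+1}-z_*,\xi_{k+1}+C(z_{k+1})\rangle$ is $\le 0$ by monotonicity of $M$ and cocoercivity of $C$ (this uses $\xi_{k+1}\in M(z_{k+1})$, $0\in M(z_*)+C(z_*)$, and $\alpha>2$); the $\omega_k$-term is $\le -(2\beta-(2-\varepsilon)\gamma)\tfrac{3(\alpha-2)}{8(\alpha-1)}\gamma(k+1)^2\|C(z_{k+1})-C(z_k)\|^2$ by \eqref{dec:mu}; and the remaining block $2\nu_2 k\|z_{k+1}-z_k\|^2+2\gamma(\nu_0 k+\nu_1)\langle z_{k+1}-z_k,\xi_{k+1}+C(z_k)\rangle+\tfrac12\gamma^2(\nu_3 k+\alpha\sqrt{\nu_5 k+\alpha}+\nu_4)\|\xi_{k+1}+C(z_k)\|^2$ is $\le R_k\le 0$ up to reabsorbing the constant $\sqrt{(9\alpha-2)/(2(5\alpha-2))}$ (one keeps the surplus negative part, using $\nu_2<0$ and $\nu_3<0$ to split off a strictly negative multiple of $k\|z_{k+1}-z_k\|^2$ and $k\|\xi_{k+1}+C(z_k)\|^2$), while the last line $-\gamma^2(\nu_5 k+\alpha)(\nu_6 k+\nu_7)\|\xi_{k+1}+C(z_k)-(\xi_k+C(z_{k-1}))\|^2\le 0$ is already negative.

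Assembling these, for $k\ge K$ one obtains an inequality of the shape
\begin{equation*}
\F_{\eta,k+1}-\F_{\eta,k}\le \frac{c}{k^{3/2}}\F_{\eta,k+1}-a_1 k\|z_{k+1}-z_k\|^2-a_2 k\|\xi_{k+1}+C(z_k)\|^2-a_3 k^2\|C(z_{k+1})-C(z_k)\|^2-a_4 k^2\|\xi_{k+1}+C(z_k)-(\xi_k+C(z_{k-1}))\|^2-a_5\langle z_{k+1}-z_*,\xi_{k+1}+C(z_{k+1})\rangle,
\end{equation*}
with all $a_i>0$. Since $\F_{\eta,k}\ge 0$ for $k\ge K$ by \eqref{def:S} and $\sum_k k^{-3/2}<\infty$, a standard summation lemma (of Gronwall/quasi-Fejér type — e.g. the one behind Opial-type arguments, or directly: rearranging gives $(1-ck^{-3/2})\F_{\eta,k+1}\le \F_{\eta,k}-(\text{nonneg. terms})$, then telescoping after dividing by the convergent product $\prod(1-ck^{-3/2})$) shows that $(\F_{\eta,k})_{k\ge K}$ is bounded, and that all the summed nonnegative quantities have finite total mass. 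This yields part (i) — noting $\langle z_k-z_*,\xi_k+C(z_k)\rangle\ge 0$ term by term, and that the series $\sum k^2\|C(z_k)-C(z_{k-1})\|^2$, $\sum k^2\|\xi_{k+1}+C(z_k)-(\xi_k+C(z_{k-1}))\|^2$, $\sum k\|z_{k+1}-z_k\|^2$, $\sum k\|\xi_{k+1}+C(z_k)\|^2$ all converge (the tails from $k\ge K$ are finite by the above; finitely many initial terms are harmless). For part (ii): boundedness of $(\F_{\eta,k})$ together with the lower bound \eqref{G:inequa}, in which the coefficient of $\|z_k-z_*\|^2$ is $\ge 0$ and (for $k$ large) the coefficient of $k^2\|z_k-z_{k-1}\|^2$ is $\ge 0$ by \eqref{dec:mu}, forces $(z_k)$ bounded and $\|z_k-z_{k-1}\|=\bO(1/k)$; then $\F_{\eta,k}\ge c'k\|\xi_k+C(z_{k-1})\|^2-\bO(1)$ (after absorbing the cross term $2\eta\alpha\gamma\langle z_k-z_*,\xi_k+C(z_{k-1})\rangle$ using Young's inequality against the $\|z_k-z_*\|^2$ term and the $\tfrac12\gamma^2\alpha(2k+\alpha)\|\xi_k+C(z_{k-1})\|^2$ term) gives $\|\xi_k+C(z_{k-1})\|=\bO(1/\sqrt k)$; combining with $\|C(z_k)-C(z_{k-1})\|\le\beta^{-1}\|z_k-z_{k-1}\|=\bO(1/k)$ (cocoercivity implies Lipschitzness with constant $1/\beta$) upgrades this to $\|\xi_k+C(z_k)\|=\bO(1/k)$, and $\langle z_k-z_*,\xi_k+C(z_k)\rangle=\bO(1/k)$ follows from Cauchy-Schwarz.

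For part (iii), I would rewrite \eqref{def:G} as $\F_{\eta,k}=\E_{\eta,k}+(\text{correction terms})$ where the correction terms are $-2\gamma(\tfrac{3(\alpha-2)}{4(\alpha-1)}k+\alpha)k\langle z_k-z_{k-1},C(z_k)-C(z_{k-1})\rangle+\tfrac12\gamma^2(\tfrac{3(\alpha-2)}{4(\alpha-1)}k+\alpha)((2-\varepsilon)(2k+\alpha)+\alpha\sqrt{\tfrac{3(\alpha-2)}{4(\alpha-1)}k+\alpha})\|C(z_k)-C(z_{k-1})\|^2$; using $|\langle z_k-z_{k-1},C(z_k)-C(z_{k-1})\rangle|\le\|z_k-z_{k-1}\|\cdot\beta^{-1}\|z_k-z_{k-1}\|$ and the rates from (ii), these correction terms are $\bO(k^2\cdot\tfrac1{k^2})+\bO(k^{5/2}\cdot\tfrac1{k^2})$ — wait, one must be careful: $k^2\|C(z_k)-C(z_{k-1})\|^2$ is summable and $\to 0$, so in fact the correction terms tend to $0$, hence $\E_{\eta,k}-\F_{\eta,k}\to 0$. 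It therefore suffices to show one of the two sequences converges. I would show $(\F_{\eta,k})_{k\ge K}$ converges: from the displayed descent inequality, $\F_{\eta,k+1}-\F_{\eta,k}\le ck^{-3/2}\F_{\eta,k+1}$, and since $\F_{\eta,k+1}$ is bounded this is $\le \tilde c k^{-3/2}$ with $\sum\tilde c k^{-3/2}<\infty$; thus $k\mapsto\F_{\eta,k}-\sum_{j\ge k}\tilde c j^{-3/2}$ is eventually nondecreasing and bounded above, hence convergent, so $(\F_{\eta,k})$ converges, and by the correction estimate $(\E_{\eta,k})$ converges to the same limit.

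The main obstacle I anticipate is the bookkeeping in the second paragraph: getting the polynomial coefficients in front of $\|z_{k+1}-z_k\|^2$, $\|\xi_{k+1}+C(z_k)\|^2$, $\|C(z_{k+1})-C(z_k)\|^2$ to come out with a definite (negative) sign simultaneously, which is exactly what the constraints $0<\varepsilon<\tfrac{3(\alpha-2)}{4(\alpha-1)}$, $0<\gamma<\tfrac{8(\alpha-1)}{5\alpha-2}\beta$, and $\underline\eta(\alpha)<\eta<\overline\eta(\alpha)$ in \cref{lemm2:3} were engineered to guarantee. Concretely, one must verify that after discarding $R_k\le 0$ there is still a strictly negative residual of the form $-a_1 k\|z_{k+1}-z_k\|^2-a_2 k\|\xi_{k+1}+C(z_k)\|^2$ left over (this is where $\sqrt{(9\alpha-2)/(2(5\alpha-2))}<1$ versus the raw coefficients matters, and where the asymptotic dominance of the $k$-terms over the $\sqrt k$- and constant terms is invoked for $k$ large), and likewise that $\omega_k$ beats its own lower-order $\sqrt k$ corrections. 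Once the signs are secured, the summability and the convergence of $(\F_{\eta,k})$ are routine telescoping arguments, and the passage to $(\E_{\eta,k})$ is immediate from the already-established $\bO(1/k)$ rates.
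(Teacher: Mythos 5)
Your treatment of parts (i) and (iii) follows essentially the paper's own route: the $\|z_{k+1}-z_*\|^2$ term on the right-hand side of \eqref{bbck2} is absorbed into $\F_{\eta,k+1}$ via \eqref{def:S}, the block governed by $\nu_0,\nu_1,\nu_2,\nu_3$ is dominated by $R_k\le 0$ while the factor $\sqrt{(9\alpha-2)/(2(5\alpha-2))}<1$ leaves a strictly negative surplus, $\omega_k$ is bounded below by the quadratic term from \eqref{dec:mu}, and the resulting perturbed descent inequality is handled by a quasi-Fej\'er summation (the paper packages this as $a_{k+1}\le(1+d_k)a_k-b_k$ and invokes Lemma \ref{ddec}); the identification of $\lim\E_{\eta,k}$ with $\lim\F_{\eta,k}$ via $k^2\|C(z_k)-C(z_{k-1})\|^2\to 0$ and the boundedness of $k\|z_k-z_{k-1}\|$ is also the paper's argument. (A small slip there: your auxiliary sequence $\F_{\eta,k}-\sum_{j\ge k}\tilde c\, j^{-3/2}$ is not nondecreasing; the correct choice is $\F_{\eta,k}+\sum_{j\ge k}\tilde c\, j^{-3/2}$, which is nonincreasing and bounded below.)

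The genuine gap is in part (ii). You extract the rate for $\xi_k+C(z_{k-1})$ only from the term $\tfrac12\gamma^2\alpha(2k+\alpha)\|\xi_k+C(z_{k-1})\|^2$ in \eqref{G:inequa}, which yields $\|\xi_k+C(z_{k-1})\|=\bO(1/\sqrt k)$, and then claim that adding $\|C(z_k)-C(z_{k-1})\|=\bO(1/k)$ ``upgrades'' this to $\|\xi_k+C(z_k)\|=\bO(1/k)$. That is a non sequitur: the triangle inequality gives only $\bO(1/\sqrt k)+\bO(1/k)=\bO(1/\sqrt k)$, and Cauchy--Schwarz then gives merely $\langle z_k-z_*,\xi_k+C(z_k)\rangle=\bO(1/\sqrt k)$, so the stated rates in (ii) are not reached. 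The missing ingredient is the first squared norm in \eqref{G:inequa}: boundedness of $(\F_{\eta,k})$ gives $\bigl\lVert 4\eta(z_k-z_*)+2k(z_k-z_{k-1})+\tfrac{5\alpha-2}{2(\alpha-1)}\gamma k\,(\xi_k+C(z_{k-1}))\bigr\rVert\le\mathrm{const}$, and since $\|z_k-z_*\|$ and $k\|z_k-z_{k-1}\|$ are bounded, the triangle inequality isolates $\gamma k\,\|\xi_k+C(z_{k-1})\|=\bO(1)$, i.e.\ the full $\bO(1/k)$ rate; only then does $k\|C(z_k)-C(z_{k-1})\|=\bO(1)$ (from the summability in (i), or Lipschitz continuity of $C$) yield $\|\xi_k+C(z_k)\|=\bO(1/k)$ and, with boundedness of $(z_k)$, the inner-product rate. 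A related ordering issue: you assert boundedness of $(z_k)$ and $k\|z_k-z_{k-1}\|=\bO(1)$ from the ``nonnegative coefficients'' in \eqref{G:inequa} before neutralizing the possibly negative cross term $2\eta\alpha\gamma\langle z_k-z_*,\xi_k+C(z_{k-1})\rangle$; this is precisely what $S_k\ge 0$ in \eqref{def:S0} is for (your later Young absorption plays the same role), and it must be invoked at that point, otherwise the first claim is unsupported.
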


\begin{proof}
From the step size assumption we deduce that there exists $0<\varepsilon<\frac{3(\alpha-2)}{4(\alpha-1)}<1$ such that $0 < \gamma< \frac{2}{2-\varepsilon} \beta$. Furthermore, we consider the parameters $\underline{\eta}(\alpha)$ and $\overline{\eta}(\alpha)$ provided by Lemma \ref{lemm2:3} (ii) and choose $\eta \in \left( \underline{\eta}(\alpha), \overline{\eta}(\alpha) \right)$. From \cref{lemm2:3}  (i) we obtain an integer  $K_{0}(\eta) \geq 1$ such that for every $k\geq K_{0}(\eta)$  the inequalities (\ref{dec:mu}), (\ref{def:S0}) and (\ref{def:S}) hold; from \cref{lemm2:3} (ii), we obtain an integer $K_{1}(\eta)\geq 1$ such that for every $k\geq K_{1}(\eta)$ the inequality (\ref{dec:R}) holds.

Invoking \eqref{bbck2}, for every $k\geq K_{2}(\eta):=\max\{K_{0}(\eta), K_{1}(\eta)\}$ it holds
\begin{align}
\F_{\eta,k+1} - \F_{\eta,k} 
\leq & \  \frac{4(\alpha -2)(\alpha-1)}{3(k+1)\sqrt{k+1}}\eta^2\left\lVert z_{k+1}-z_{*} \right\rVert^2-2 \eta \gamma(\alpha-2)\left\langle z_{k+1} - z_{*} , \xi_{k+1}+C \left( z_{k+1} \right) \right\rangle\nonumber\\
& \ + \frac{1}{2}\left(1-\sqrt{\frac{9\alpha-2}{2(5\alpha-2)}}\right)\gamma^{2}\left(\nu_{3} k+\alpha\sqrt{\nu_{5} k+\alpha } +\nu_{4}\right) \left\lVert \xi_{k+1}+C \left( z_{k} \right) \right\rVert^2 \nonumber\\
& \ - \omega_{k} \|C \left( z_{k+1} \right)-C \left( z_{k} \right)\|^2+ 2\left(1-\sqrt{\frac{9\alpha-2}{2(5\alpha-2)}}\right)\nu_{2} k \left\lVert z_{k+1} - z_{k} \right\rVert ^{2} \nonumber\\
& \ - \gamma^{2}\left(\nu_{5} k+\alpha\right)\left(\nu_{6} k+\nu_{7}\right)\|\xi_{k+1}+C \left( z_{k} \right) - (\xi_{k}+C \left( z_{k-1} \right))\|^2.\label{func:G}
\end{align}
We set
\begin{align*}
	\nu_{8} :=\frac{4}{3}(\alpha-2)\eta\left(1-\frac{8}{5\alpha-2}\eta\right)^{-1}>0 \quad \mbox{and} \quad
	\nu_{9} := \left(2\beta-(2-\varepsilon) \gamma \right)\frac{3(\alpha -2)}{8(\alpha-1)}\gamma > 0.
\end{align*}
In view of (\ref{def:S}), we have that for every $k\geq K_{2}(\eta)$
\begin{align*}
\frac{4(\alpha -2)(\alpha-1)}{3(k+1)\sqrt{k+1}}\eta^2\left\lVert z_{k+1}-z_{*} \right\rVert^2 
&\leq \frac{\nu_{8}}{(k+1)\sqrt{k+1}}\F_{\eta,k+1},\nonumber
\end{align*}
therefore
\begin{align}
& \ \left(1-\frac{\nu_{8}}{(k+1)\sqrt{k+1}}\right)\F_{\eta,k+1} \nonumber\\
 \leq & \ \F_{\eta,k}-2 \eta \gamma(\alpha-2)\left\langle z_{k+1} - z_{*} , \xi_{k+1}+C \left( z_{k+1} \right) \right\rangle\nonumber\\
& \ + \frac{1}{2}\left(1-\sqrt{\frac{9\alpha-2}{2(5\alpha-2)}}\right)\gamma^{2}\left(\nu_{3} k+\alpha\sqrt{\nu_{5} k+\alpha } +\nu_{4}\right) \left\lVert \xi_{k+1}+C \left( z_{k} \right) \right\rVert^2 \nonumber\\
& \ -\nu_{9} \left( k+1 \right) ^{2} \|C \left( z_{k+1} \right)-C \left( z_{k} \right)\|^2+ 2\left(1-\sqrt{\frac{9\alpha-2}{2(5\alpha-2)}}\right)\nu_{2} k \left\lVert z_{k+1} - z_{k} \right\rVert ^{2} \nonumber\\
& \ - \gamma^{2}\left(\nu_{5} k+\alpha\right)\left(\nu_{6} k+\nu_{7}\right)\|\xi_{k+1}+C \left( z_{k} \right) - (\xi_{k}+C \left( z_{k-1} \right))\|^2.\label{func:G1}
\end{align}
For $K_{3}(\eta):= \lceil\sqrt[3]{\nu_{8}^2}-1\rceil$ we see that for every $k \geq K_{3}(\eta)$
\begin{equation}
\left(1-\frac{\nu_{8}}{(k+1)\sqrt{k+1}}\right)^{-1}=\frac{(k+1)\sqrt{k+1}}{(k+1)\sqrt{k+1}-\nu_{8}}=1+\frac{\nu_{8}}{(k+1)\sqrt{k+1}-\nu_{8}}>1.\nonumber
\end{equation}
Since $\nu_{2},\nu_{3}<0$ and $\nu_{5},\nu_{6} > 0$, there exists an integer $K_{4} \geq 1$ such that for every $k\geq K_{4}$ it holds
\begin{equation}
\nu_{3} k+\alpha\sqrt{\nu_{5} k+\alpha } +\nu_{4}<0 \quad \textnormal{and} \quad \left(\nu_{5} k+\alpha\right)\left(\nu_{6} k+\nu_{7}\right)>0.\nonumber
\end{equation}
Hence, for every $k\geq K_{5}(\eta):=\max\{K_{2}(\eta), K_{3}(\eta), K_{4}\}$, according to (\ref{func:G1}), we obtain
\begin{align}
0 \leq \F_{\eta,k+1} \leq& \left( 1+\frac{\nu_{8}}{(k+1)\sqrt{k+1}-\nu_{8}}\right)\F_{\eta,k}-2 \eta \gamma(\alpha-2)\left\langle z_{k+1} - z_{*} , \xi_{k+1}+C \left( z_{k+1} \right) \right\rangle\nonumber\\
&+ \frac{1}{2}\left(1-\sqrt{\frac{9\alpha-2}{2(5\alpha-2)}}\right)\gamma^{2}\left(\nu_{3} k+\alpha\sqrt{\nu_{5} k+\alpha } +\nu_{4}\right) \left\lVert \xi_{k+1}+C \left( z_{k} \right) \right\rVert^2 \nonumber\\
&- \nu_{9} \left( k+1 \right) ^{2} \|C \left( z_{k+1} \right)-C \left( z_{k} \right)\|^2+ 2\left(1-\sqrt{\frac{9\alpha-2}{2(5\alpha-2)}}\right)\nu_{2} k \left\lVert z_{k+1} - z_{k} \right\rVert ^{2}\nonumber\\
&- \gamma^{2}\left(\nu_{5} k+\alpha\right)\left(\nu_{6} k+\nu_{7}\right)\|\xi_{k+1}+C \left( z_{k} \right) - (\xi_{k}+C \left( z_{k-1} \right))\|^2,\nonumber
\end{align}
we utilized the fact that all summands on the right-hand side of the inequality are nonpositive, except for the first one. The fact that $\left\langle z_{k+1} - z_{*} , \xi_{k+1}+C \left( z_{k+1} \right) \right\rangle \geq 0$ for every $k \geq 1$ follows from the monotonicity of the operator $M+C$ and relation \eqref{algo:inc-M}.
This means that for 
\begin{align*}
a_k:=&\,\F_{\eta,k}\geq 0,\nonumber\\
b_k:=&\,2 \eta \gamma(\alpha-2)\left\langle z_{k+1} - z_{*} ,\xi_{k+1}+C \left( z_{k+1} \right) \right\rangle-  2\left(1-\sqrt{\frac{9\alpha-2}{2(5\alpha-2)}}\right)\nu_{2} k \left\lVert z_{k+1} - z_{k} \right\rVert ^{2} \nonumber\\
&+\nu_{9} \left( k+1 \right) ^{2} \|C \left( z_{k+1} \right)-C \left( z_{k} \right)\|^2 + \gamma^{2}\left(\nu_{5} k+\alpha\right)\left(\nu_{6} k+\nu_{7}\right)\|\xi_{k+1}+C \left( z_{k} \right) - (\xi_{k}+C \left( z_{k-1} \right))\|^2\nonumber\\
&\,-\frac{1}{2}\left(1-\sqrt{\frac{9\alpha-2}{2(5\alpha-2)}}\right)\gamma^{2}\left(\nu_{3} k+\alpha\sqrt{\nu_{5} k+\alpha } +\nu_{4}\right) \left\lVert \xi_{k+1}+C \left( z_{k} \right) \right\rVert^2 \geq 0 \nonumber\\
d_k:=&\,\frac{\nu_{8}}{(k+1)\sqrt{k+1}-\nu_{8}}>0,\nonumber
\end{align*}
it holds
\begin{equation}\label{perturbeddec}
a_{k+1} \leq (1+d_k)a_k - b_k \quad \forall k \geq K_{5}(\eta),
\end{equation}
which, according \cref{ddec}, ensures that the relations (\ref{rate:sum}) hold as well as that the sequence $(\F_{\eta,k})_{k \geq 1}$ is convergent.

As $(\F_{\eta,k})_{k \geq 1}$ is also bounded from above,  (\ref{G:inequa}), (\ref{dec:mu}) and (\ref{def:S0}) yield that for every $k\geq K_{5}(\eta)$
\begin{align}
&\frac{3(\alpha-2)}{4(5\alpha-2)} \left\lVert 4 \eta \left( z_{k} - z_{*} \right)+ 2k \left( z_{k} - z_{k-1} \right) + \frac{5\alpha-2}{2(\alpha-1)}\gamma k \left(\xi_k +C \left( z_{k-1} \right)\right) \right\rVert ^{2} \nonumber\\
&	+\frac{3}{2}(\alpha-2)\left(\frac{1}{5\alpha-2}-\frac{1}{4(2-\varepsilon)(\alpha-1)}\right) k^2 \left\lVert z_k-z_{k-1} \right\rVert ^{2}\nonumber\\
&+  \eta(\alpha-1) \left( 1 - \frac{8\eta}{5\alpha-2} \right) \left\lVert z_{k} - z_{*} \right\rVert ^{2}\nonumber\\
\leq \ 	&\F_{\eta,k}\leq \sup_{k\geq 1}\F_{\eta,k}<+\infty.\nonumber
\end{align}
In particular, for every $k\geq K_{5}(\eta)$ it holds
\begin{align*}
	 &\left\lVert 4 \eta \left( z_{k} - z_{*} \right)+ 2k \left( z_{k} - z_{k-1} \right) + \frac{5\alpha-2}{2(\alpha-1)}\gamma k \left(\xi_k +C \left( z_{k-1} \right)\right) \right\rVert
	 \leq \nu_{10} := 2 \sqrt{\frac{5\alpha-2}{3(\alpha-2)}\sup_{k\geq 1}\F_{\eta,k} }<+\infty, \nonumber \\
	& k \left\lVert z_k-z_{k-1} \right\rVert 
	 \leq \nu_{11} := \sqrt{\frac{8(\alpha-1)(5\alpha-2)(2-\varepsilon)}{3(\alpha-2)\left(4(\alpha-1)(2-\varepsilon)-5\alpha+2\right)}\sup_{k\geq 1}\F_{\eta,k}} <+\infty, \nonumber \\
	& \left\lVert z_{k} - z_{*} \right\rVert 
	 \leq \nu_{12} := \sqrt{\frac{5\alpha-2}{\eta(\alpha-1)(5\alpha-2-8\eta)}\sup_{k\geq 1}\F_{\eta,k}} <+\infty. \nonumber
\end{align*}
Using the triangle inequality, we deduce from here that for every $k\geq K_{5}(\eta)$
\begin{align}
\left\lVert \xi_{k} + C \left( z_{k-1} \right) \right\rVert \leq & \ \frac{2(\alpha-1)}{(5\alpha-2) \gamma k}\left\lVert 4 \eta \left( z_{k} - z_{*} \right)+ 2k \left( z_{k} - z_{k-1} \right) + \frac{5\alpha-2}{2(\alpha-1)}\gamma k \left(\xi_k +C \left( z_{k-1} \right)\right) \right\rVert \nonumber\\
& \ +\frac{4(\alpha-1)}{(5\alpha-2) \gamma}\left\lVert z_k-z_{k-1} \right\rVert+\frac{8\eta(\alpha-1)}{(5\alpha-2)\gamma k}\left\lVert z_k-z_{*} \right\rVert\leq \frac{\nu_{13}}{k},\nonumber
\end{align}
where
\begin{equation}
\nu_{13}:=\frac{2(\alpha-1)}{(5\alpha-2) \gamma}(\nu_{10}+2\nu_{11}+4\overline{\eta}(\alpha)\nu_{12})>0.\nonumber
\end{equation}
Further, it follows from
\begin{equation}
\mysum_{k \geq 1} k^2 \left\lVert C \left( z_{k} \right) - C \left( z_{k-1} \right) \right\rVert ^{2} < + \infty\nonumber
\end{equation}
that
\begin{equation*}
\lim_{k\rightarrow +\infty}k\left\lVert C \left( z_{k} \right)-C \left( z_{k-1} \right) \right\rVert=0\quad \Rightarrow \quad \nu_{14}:=\sup_{k\geq 1}\left\{k\left\lVert C \left( z_{k} \right)-C \left( z_{k-1} \right) \right\rVert\right\}<+\infty.
\end{equation*}
This implies that for every $k\geq K_{5}(\eta)$
\begin{align*}
\|\xi_{k+1}+C \left( z_{k+1} \right)\|&\leq \|C \left( z_{k+1} \right)-C \left( z_{k} \right)\|+\|\xi_{k+1}+C \left( z_{k} \right)\| \leq \frac{\nu_{15}}{k},
\end{align*}
where $\nu_{15}:=\nu_{13}+\nu_{14}>0$. Since $(z_k)_{k\geq 0}$ is bounded, we deduce from here that for every $k\geq K_{5}(\eta)$
\begin{equation*}
0\leq \langle z_{k+1}-z_{*}, \xi_{k+1}+C \left( z_{k+1} \right)\rangle\leq\left\lVert z_{k+1}-z_{*} \right\rVert\|\xi_{k+1}+C \left( z_{k+1} \right)\|\leq \frac{\nu_{12}\nu_{15}}{k}.\nonumber
\end{equation*}
In conclusion,
\begin{align*}
&\left\lVert z_k-z_{k-1} \right\rVert = \bO \left( \dfrac{1}{k } \right),\,\quad  \left\lVert \xi_k+C \left( z_{k-1} \right) \right\rVert = \bO \left( \dfrac{1}{k} \right)\\
&\left\lVert \xi_k+C \left( z_{k} \right) \right\rVert = \bO \left( \dfrac{1}{k} \right),\,\quad
\left\langle z_{k} - z_{*} , \xi_k+C \left( z_{k} \right)\right\rangle = \bO \left( \dfrac{1}{k } \right) 
\end{align*}
as $k \rightarrow +\infty$.

In the remainder of the proof we will show that the sequence $(\E_{\eta,k})_{k\geq 1}$ is convergent and that it has the same limit as $(\F_{\eta,k})_{k\geq 1}$. On the one hand, the Cauchy-Schwarz inequality yields
\begin{align*}
0&\leq \lim_{k\rightarrow +\infty}k^2\big|\langle z_k-z_{k-1}, C \left( z_{k} \right)-C \left( z_{k-1} \right)\rangle\big|
\leq \nu_{11}\lim_{k\rightarrow +\infty}k\big\|C \left( z_{k} \right)-C \left( z_{k-1} \right)\big\|=0,
\end{align*}
therefore
\begin{align*}
0&\leq \lim_{k\rightarrow +\infty}k^2\langle z_k-z_{k-1}, C \left( z_{k} \right)-C \left( z_{k-1} \right)\rangle
=0.
\end{align*}
On the other hand,
\begin{align*}
0\leq\lim_{k\rightarrow +\infty} \sqrt{k}\left\lVert C \left( z_{k} \right)-C \left( z_{k-1} \right) \right\rVert^2&\leq\lim_{k\rightarrow +\infty} k\sqrt{k}\left\lVert C \left( z_{k} \right)-C \left( z_{k-1} \right) \right\rVert^2 \nonumber\\
&\leq \lim_{k\rightarrow +\infty} k^2\left\lVert C \left( z_{k} \right)-C \left( z_{k-1} \right) \right\rVert^2=0,\nonumber
\end{align*}
which, by using the definitions of the two discrete energy functions, allow us to conclude that
\begin{equation}
\lim_{k\rightarrow +\infty} \E_{\eta,k}=\lim_{k\rightarrow +\infty} \F_{\eta,k} \in \sR.\nonumber
\end{equation}	
\end{proof}

\begin{rmk}\label{rmk28}
The discrete energy function employed in the convergence analysis of the \emph{Fast Krasnosel'ski\u{\i}-Mann algorithm} (fKM), as introduced in \cite{Bot2023Nguyen} (also referenced in \cite{Bot2023Robert_Nguyen}), and forming the basis of the iterative scheme \eqref{fast_KM}, exhibits a nonnegative and nonincreasing nature.  In the current context, the introduction of additional structural elements inherent in the sum of two operators results in a discrete energy function that demonstrates a perturbed decreasing property, as observed in \eqref{perturbeddec}. However, this perturbation does not impede the convergence of the sequence $\left( \F_{\eta,k} \right)_{k \geq 1}$.
\end{rmk}

Next we prove the weak convergence of the generated sequence of the iterates and derive improved convergence rates.
\begin{thm}
\label{thm:ex:conv}
Let $z_* \in {\cal Z}$, $\left(z_{k} \right)_{k \geq 0}$ be the sequence generated by Algorithm \ref{algo:ffb}, and  $\left(\xi_{k} \right)_{k \geq 1}$ the sequence generated by the iterative scheme \eqref{algo:z-xi}. Then the following statements are true:
\begin{enumerate}
\item 
the sequence $\left(z_{k} \right) _{k \geq 0}$ converges weakly to a solution of \eqref{intro:pb:eq};

\item
it holds
\begin{align*}
\left\lVert z_k-z_{k-1} \right\rVert = o \left( \dfrac{1}{k } \right),\,\quad  \left\langle z_{k} - z_{*} , \xi_k+C \left( z_{k} \right)\right\rangle = o \left( \dfrac{1}{k } \right),
\,\quad \left\lVert\xi_k+C \left( z_{k} \right) \right\rVert = o \left( \dfrac{1}{k} \right) \quad \mbox{as} \ k \rightarrow + \infty.
\end{align*}
Since, as seen in \cref{rmk:sub}, $\xi_k\in M(z_k)$ for every $k\geq 1$, it yields
\begin{equation*}
\dist (0, M(z_k) + C(z_k)) = o \left( \dfrac{1}{k } \right) \quad \mbox{and} \quad \left\lVert z_k-J_{\gamma M}\big(z_k-\gamma C \left( z_{k} \right)\big)\right\rVert  = o \left( \dfrac{1}{k } \right) \quad \mbox{as} \ k \rightarrow + \infty.
\end{equation*}
\end{enumerate}
\end{thm}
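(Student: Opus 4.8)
The plan is to establish the $o(1/k)$ rates in (ii) first, and then to deduce the weak convergence in (i) from them via Opial's lemma. As a preliminary step I would harvest the ``cheap'' limits contained in \cref{prop:lim}: since $\sum_{k\ge1}a_k<+\infty$ forces $a_k\to0$, the summabilities \eqref{rate:sum} immediately give $k\,\|C(z_k)-C(z_{k-1})\|\to0$, $k\,\|(\xi_{k+1}+C(z_k))-(\xi_k+C(z_{k-1}))\|\to0$ and $\langle z_k-z_*,\xi_k+C(z_k)\rangle\to0$, while $(z_k)_{k\ge0}$ is bounded. What remains is the upgrade of the $\bO(1/k)$ bounds for $\|z_k-z_{k-1}\|$ and $\|\xi_k+C(z_k)\|$ to $o(1/k)$; once this is in hand, the rate for $\langle z_k-z_*,\xi_k+C(z_k)\rangle$ follows from Cauchy--Schwarz and boundedness of $(z_k)_{k\ge0}$, and the two identities in the last display of (ii) follow from $\xi_k\in M(z_k)$ (see \eqref{algo:inc-M}) together with the estimate $r_{\mathrm{fix},\gamma}(z)\le\gamma\,\dist(0,M(z)+C(z))$.

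For the $o(1/k)$ upgrade I would exploit that $\E_{\eta,k}$ is \emph{affine} in the parameter $\eta$: from \eqref{im:defi:E-k:eq} one reads off $\E_{\eta,k}=\eta\,p_k+q_k$, where $p_k$ collects the three terms carrying the factor $\eta$ and the $\eta$-independent part is
\begin{equation*}
q_k:=\tfrac12 k^2\Big\|2(z_k-z_{k-1})+\tfrac{5\alpha-2}{4(\alpha-1)}\gamma\big(\xi_k+C(z_{k-1})\big)\Big\|^2+\tfrac12\gamma^2\Big(\tfrac{3(\alpha-2)}{4(\alpha-1)}k+\alpha\Big)\Big(\tfrac{5\alpha-2}{4(\alpha-1)}k+\alpha\Big)\big\|\xi_k+C(z_{k-1})\big\|^2\ge0.
\end{equation*}
By \cref{prop:lim}(iii), $(\E_{\eta,k})_{k\ge1}$ converges for every $\eta$ in the nonempty open interval $\big(\underline{\eta}(\alpha),\overline{\eta}(\alpha)\big)$ of \cref{lemm2:3}(ii); evaluating at two distinct such values and solving the resulting linear system shows that both $(p_k)_{k\ge1}$ and $(q_k)_{k\ge1}$ converge. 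On the other hand, reindexing the summabilities in \eqref{rate:sum} yields $\sum_{k\ge1}k\,\|z_k-z_{k-1}\|^2<+\infty$ and $\sum_{k\ge1}k\,\|\xi_k+C(z_{k-1})\|^2<+\infty$, and since the coefficient of $\|\xi_k+C(z_{k-1})\|^2$ in $q_k$ is $\bO(k^2)$, this gives $\sum_{k\ge1}q_k/k<+\infty$; a nonnegative convergent sequence $(q_k)_{k\ge1}$ with $\sum_k q_k/k<+\infty$ must have limit $0$, for otherwise $\sum_k q_k/k$ would dominate a positive multiple of the harmonic series. Hence $q_k\to0$, and since $q_k$ is a sum of two nonnegative terms whose coefficients dominate positive multiples of $k^2$, this forces $k\,\big\|2(z_k-z_{k-1})+\tfrac{5\alpha-2}{4(\alpha-1)}\gamma(\xi_k+C(z_{k-1}))\big\|\to0$ and $k\,\|\xi_k+C(z_{k-1})\|\to0$; the triangle inequality then yields $k\,\|z_k-z_{k-1}\|\to0$, and adding $k\,\|C(z_k)-C(z_{k-1})\|\to0$ also gives $k\,\|\xi_k+C(z_k)\|\to0$.

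It then remains to run Opial's lemma for (i). For the first hypothesis, that $\lim_{k\to+\infty}\|z_k-z\|$ exists for every $z\in\mathcal{Z}$, I would apply \cref{prop:lim} with $z_*=z$: convergence of $(p_k)_{k\ge1}$ together with the rates just obtained shows that the two scalar-product contributions to $p_k$ vanish in the limit (each is either $k$ times an $o(1/k)$ quantity paired with the bounded factor $z_k-z$, or the bounded factor $z_k-z$ paired with an $o(1/k)$ quantity), so $2(\alpha-1)\|z_k-z\|^2$ converges. For the second hypothesis, that every weak sequential cluster point of $(z_k)_{k\ge0}$ lies in $\mathcal{Z}$: by \eqref{algo:inc-M} we have $\xi_k+C(z_k)\in(M+C)(z_k)$, and $\xi_k+C(z_k)\to0$ strongly by (ii); since $C$ is monotone, continuous and everywhere defined, $M+C$ is maximally monotone, so its graph is sequentially closed in the weak$\times$strong topology, and therefore any weak limit $\bar z$ of a subsequence of $(z_k)_{k\ge0}$ satisfies $0\in(M+C)(\bar z)$. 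Opial's lemma then yields weak convergence of the whole sequence to a point of $\mathcal{Z}$.

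I expect the $o(1/k)$ upgrade to be the main obstacle: the summability estimates alone only deliver $o(1/\sqrt k)$, and it is precisely the convergence of the entire family $(\E_{\eta,k})_{k\ge1}$---equivalently of its $\eta$-independent part $q_k$---combined with the harmonic-series obstruction that sharpens the rate to $o(1/k)$. By contrast, once the rates are available, the Opial argument and the identification of the limits are routine.
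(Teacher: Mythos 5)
Your proposal is correct, and it reaches the theorem by a genuinely different organization than the paper. The paper proves (i) first: it introduces the auxiliary sequences $p_k$ and $q_k$ of \eqref{defi:im:p-k}--\eqref{defi:im:q-k}, shows that the series $\sum_{i}\langle z_i-z_*,\xi_i+C(z_{i-1})\rangle$ converges (via absolute convergence of $\sum_i\langle z_i-z_*,C(z_{i-1})-C(z_i)\rangle$), establishes that $(\alpha-1)q_k+k(q_k-q_{k-1})$ has a limit, and then invokes \cref{lem:lim-u-k} to conclude that $\lim_k\lVert z_k-z_*\rVert$ exists --- all of this with only the $\bO(1/k)$ rates of \cref{prop:lim} in hand; the $o(1/k)$ rates of (ii) are derived afterwards, using the convergence of $p_k$ obtained along the way. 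You instead prove (ii) first, exploiting that in the expanded form \eqref{im:defi:E-k:eq} the energy $\E_{\eta,k}$ is affine in $\eta$, so that convergence of $(\E_{\eta,k})_k$ for two distinct admissible values of $\eta$ (from \cref{prop:lim}(iii) and \cref{lemm2:3}(ii)) yields convergence of both the $\eta$-linear part and the $\eta$-free nonnegative part $q_k$; the harmonic-series obstruction (a convergent nonnegative sequence with $\sum_k q_k/k<+\infty$ must tend to $0$) then forces $q_k\to 0$ and hence the $o(1/k)$ rates --- this is the same mechanism as the paper's argument with $g_k$, and your phrasing ``$\lim_k q_k=0$'' is in fact the correct reading of the paper's step stated there as ``$\lim_k\frac{1}{k}g_k=0$''. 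With the $o(1/k)$ rates available, your verification of the first Opial hypothesis becomes immediate: for each $z\in{\cal Z}$ the cross terms in the $\eta$-coefficient are $k$ times $o(1/k)$ quantities against a bounded factor, so $\lVert z_k-z\rVert^2$ converges, and you bypass the partial-sum sequence, the series argument and \cref{lem:lim-u-k} altogether; there is no circularity, since your route to the rates uses only \cref{prop:lim} and the affinity of $\E_{\eta,k}$ in $\eta$, not part (i). The trade-off is purely structural: the paper's route shows convergence of the iterates independently of the rate upgrade, while yours is shorter and needs less auxiliary machinery but ties (i) to (ii). The remaining steps --- the cluster-point identification via maximal monotonicity of $M+C$ and weak--strong closedness of its graph, the Cauchy--Schwarz step for the product rate, and the passage to the tangent and fixed-point residuals via $\xi_k\in M(z_k)$ and \cref{bb1} --- coincide with the paper's.
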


\begin{proof}
We fix $0<\varepsilon<\frac{3(\alpha-2)}{4(\alpha-1)}<1$ such that $0 < \gamma< \frac{2}{2-\varepsilon} \beta$,  consider the parameters $\underline{\eta}(\alpha)$ and $\overline{\eta}(\alpha)$ provided by Lemma \ref{lemm2:3} (ii) and choose $\eta \in \left( \underline{\eta}(\alpha), \overline{\eta}(\alpha) \right)$. 

(i) For every $k \geq 1$ we set
\begin{align}
p_{k}	& := \dfrac{1}{2} \left( \alpha - 1 \right) \left\lVert z_{k} - z_{*} \right\rVert ^{2} + k \left\langle z_{k} - z_{*} , z_{k} - z_{k-1} + \gamma  (\xi_k+C \left( z_{k-1} \right)) \right\rangle , \label{defi:im:p-k} \\
q_{k}	& := \dfrac{1}{2} \left\lVert z_{k} - z_{*} \right\rVert ^{2} + \gamma \mysum_{i = 1}^{k} \left\langle z_{i} - z_{*} , \xi_i+C(z_{i-1}) \right\rangle, \label{defi:im:q-k}
\end{align}
and notice that
\begin{align*}
q_{k} - q_{k-1}
& = \left\langle z_{k} - z_{*} , z_{k} - z_{k-1} \right\rangle - \dfrac{1}{2} \left\lVert z_{k} - z_{k-1} \right\rVert ^{2} + \gamma \left\langle z_{k} - z_{*} , \xi_k+C \left( z_{k-1} \right) \right\rangle ,
\end{align*}
and thus
\begin{equation*}
\left( \alpha - 1 \right) q_{k} + k \left( q_{k} - q_{k-1} \right) = p_{k} + \left( \alpha - 1 \right) \gamma \mysum_{i = 1}^{k} \left\langle z_{i} - z_{*} , \xi_i+C(z_{i-1}) \right\rangle - \dfrac{k}{2} \left\lVert z_{k} - z_{k-1} \right\rVert ^{2} .
\end{equation*}

Invoking \eqref{im:defi:E-k:eq}, we can rewrite the discrete energy function for every $k \geq 1$ as
\begin{align}\label{represE}
\E_{\eta,k} = & \ 4 \eta p_{k}  + 2 \eta \gamma \alpha\langle z_k-z_*,\xi_k+C \left( z_{k-1} \right)\rangle+\dfrac{1}{2} k^{2} \left\lVert 2 \left( z_{k} - z_{k-1} \right) + \frac{5\alpha-2}{4(\alpha-1)}\gamma (\xi_k+C \left( z_{k-1} \right)) \right\rVert ^{2} \nonumber \\
&+ \frac{1}{2} \gamma^{2}\left(\frac{3(\alpha-2)}{4(\alpha-1)}k +\alpha\right) \left( \frac{5\alpha-2}{4(\alpha-1)} k + \alpha \right) \left\lVert \xi_k +C \left( z_{k-1} \right) \right\rVert ^{2}.
\end{align}
From here, we easily see that for every $0\leq\underline{\eta}(\alpha) < \eta_{1} < \eta_{2} < \overline{\eta}(\alpha)$ and  every $k \geq 1$ it holds
\begin{align}
\label{conv:im:Ed-lambad}
\E_{\eta_{2},k} - \E_{\eta_{1},k}
= &4 \left( \eta_{2} - \eta_{1} \right) \left( p_{k} +\frac{1}{2}\alpha \gamma\langle z_k-z_*,\xi_k+C \left( z_{k-1} \right)\rangle \right).
\end{align}
Since the limit $\lim_{k \to + \infty} \left(\E_{\eta_{2},k} - \E_{\eta_{1},k}\right) \in \sR$ exists and 
\begin{align}\label{deczk}
\lim_{k\rightarrow +\infty}\langle z_k-z_*, \xi_k+C \left( z_{k-1} \right)\rangle
= \lim_{k\rightarrow +\infty}\left\lVert \xi_k+C \left( z_{k-1} \right) \right\rVert = 0,
\end{align}
according to \eqref{conv:im:Ed-lambad}
\begin{equation}
\label{conv:lim-p-k}
\lim\limits_{k \to + \infty} p_{k} \in \sR \textrm{ exists}.
\end{equation}	

Moreover, since for every $k \geq 1$
\begin{align}
\mysum_{i = 1}^{k}\big|\langle z_i-z_*,C(z_{i-1})-C(z_{i})\rangle\big|&\leq \mysum_{i = 1}^{k}\left\lVert z_i-z_* \right\rVert\left\lVert C(z_{i-1})-C(z_{i}) \right\rVert \nonumber\\
&\leq \frac{1}{2}\mysum_{i = 1}^{k} \frac{1}{i^2}\left\lVert z_i-z_* \right\rVert^2+\frac{1}{2}\mysum_{i = 1}^{k} i^2\left\lVert C(z_{i})-C(z_{i-1}) \right\rVert^2 \nonumber\\
&\leq \frac{1}{2}\nu_{12}^2\mysum_{i = 1}^{k} \frac{1}{i^2}+\frac{1}{2}\mysum_{i = 1}^{k} i^2\left\lVert C(z_{i})-C(z_{i-1}) \right\rVert^2 \nonumber\\
&\leq \frac{1}{2}\nu_{12}^2\mysum_{i = 1}^{+\infty} \frac{1}{i^2}+\frac{1}{2}\mysum_{i = 1}^{+\infty} i^2\left\lVert C(z_{i})-C(z_{i-1}) \right\rVert^2<+\infty, \nonumber
\end{align}
 the series $\sum_{k\geq 2}\langle z_k-z_*,C \left( z_{k-1} \right)-C \left( z_{k} \right)\rangle$ is absolutely convergent, thus convergent. It follows from here that the limit
\begin{align*}
\lim_{k\rightarrow +\infty}\mysum_{i=1}^{k}\langle z_i-z_*,\xi_i+C(z_{i-1})\rangle
=\!\!\lim_{k\rightarrow +\infty}\mysum_{i=1}^{k}\langle z_i-z_*,\xi_i+C(z_{i})\rangle+\!\!\lim_{k\rightarrow +\infty}\!\!\mysum_{i=1}^{k}\langle z_i-z_*,C(z_{i-1})-C(z_{i})\rangle\in\sR \nonumber
\end{align*}
exists. Since, in addition,
\begin{equation*}
\lim\limits_{k \to + \infty} k \left\lVert z_{k+1} - z_{k} \right\rVert ^{2} = 0,
\end{equation*}
we obtain
\begin{equation*}
\lim\limits_{k \to + \infty}  \left( \alpha - 1 \right) q_{k} + k \left( q_{k} - q_{k-1} \right) \in \sR \textrm{ exists} .
\end{equation*}
We also have that $\left( q_{k} \right)_{k \geq 1}$ is bounded due to the boundedness of $\left( z_{k} \right)_{k \geq 0}$ and the fact that $$\lim_{k\rightarrow +\infty}\sum_{i=1}^k \left\langle z_{i} - z_{*} , \xi_{i}+C(z_{i-1}) \right\rangle \in\sR ,$$ which, according to \cref{lem:lim-u-k}, allows us to conclude that $\lim_{k \to + \infty} q_{k} \in \sR$ also exists. Once again,  by the definition of $q_{k}$ and the fact that the sequence $\left(\sum_{i = 1}^{k} \left\langle z_{i} - z_{*} , \xi_{i}+C(z_{i-1}) \right\rangle \right)_{k \geq 1}$ converges, it follows that $\lim_{k \to + \infty} \left\lVert z_{k} - z_{*} \right\rVert \in \sR$ exists. In other words, since $z_* \in {\cal Z}$ was arbitrarily chosen, the hypothesis \ref{lem:Opial:dis:i} in Opial’s Lemma (see \cref{lem:Opial:dis}) is fulfilled.

Now let $\widebar{z}$ be a weak sequential cluster point of $\left(z_{k} \right) _{k \geq 0}$, meaning that there exists a subsequence $\left(z_{k_{n}} \right)_{n \geq 0}$ such that
\begin{equation*}
z_{k_{n}} \rightharpoonup \widebar{z} \textrm{ as } n \to + \infty .
\end{equation*}
From \cref{prop:lim} we have
\begin{equation*}
\xi_{k_n}+C(z_{k_n}) \to 0 \textrm{ as } n \to + \infty , \textrm{where } \xi_{k_n} \in M \left(z_{k_{n}} \right)\quad \forall n \geq 0 .
\end{equation*}
Since the sum $M+C$ is maximally monotone,  it is sequentially closed in $\sH^{\textrm{weak}} \times \sH^{\textrm{strong}}$. This gives that $0\in M(\overline{z})+C(\overline{z})$, thus $\widebar{z} \in \cal Z$, and shows that hypothesis \ref{lem:Opial:dis:ii} of Opial’s Lemma is also fulfilled. This completes the proof of the weak convergence of the sequence $(z_k)_{k \geq 0}$ to a zero of the operator $M+C$.

(ii) According to \cref{prop:lim} we have that
\begin{equation*}
\lim\limits_{k \to + \infty} k  \left\lVert \xi_k +C \left( z_{k-1} \right)  \right\rVert ^{2} = 0 \quad \textnormal{and} \quad \lim\limits_{k \to + \infty}  \left\lVert \xi_k +C \left( z_{k-1} \right)  \right\rVert ^{2} = 0,
\end{equation*}
and the sequence $(\E_{\eta,k})_{k \geq 1}$ is convergent.  Taking into account \eqref{represE}, by using (\ref{deczk}) and the fact that the limit $\lim_{k \to + \infty} p_{k} \in \sR$ exists, it follows that for the sequence
$$g_{k}:= \dfrac{k^{2}}{2} \left( \left\lVert 2 \left( z_{k} - z_{k-1} \right) + \frac{5\alpha-2}{4(\alpha-1)}\gamma (\xi_k+C \left( z_{k-1} \right)) \right\rVert ^{2} + \frac{3(\alpha-2)(5\alpha-2)}{16(\alpha-1)^2} \gamma^{2} \left\lVert \xi_k +C \left( z_{k-1} \right) \right\rVert ^{2} \right),$$
the limit
\begin{equation*}
\lim\limits_{k \to + \infty}  g_{k} \in [0,+\infty) \textrm{ exists}.
\end{equation*}
Furthermore, \cref{prop:lim}  guarantees that
\begin{align*}
\mysum_{k \geq 1} \dfrac{1}{k} g_{k}
& \leq 4 \mysum_{k \geq 1} k \left\lVert z_{k} - z_{k-1} \right\rVert ^{2} +\frac{(13\alpha-10)(5\alpha-2)}{32(\alpha-1)^2} \gamma^{2} \mysum_{k \geq 1} k \left\lVert \xi_k+C \left( z_{k-1} \right) \right\rVert ^{2}  < + \infty .
\end{align*}
From here we conclude that $\lim_{k \to + \infty} \frac{1}{k} g_{k} = 0$, and so
\begin{equation*}
\lim\limits_{k \to + \infty} k  \left\lVert 2 \left( z_{k} - z_{k-1} \right) + \frac{5\alpha-2}{4(\alpha-1)}\gamma (\xi_k+C \left( z_{k-1} \right)) \right\rVert = \lim\limits_{k \to + \infty} k \left\lVert \xi_k +C \left( z_{k-1} \right) \right\rVert  = 0 .
\end{equation*}
This immediately implies $\lim_{k \to + \infty} k \left\lVert z_k-z_{k-1} \right\rVert  = 0$ and, by using the Lipschitz continuity of $C$, that $\lim\limits_{k \to + \infty} k \left\lVert \xi_k+C \left( z_{k} \right) \right\rVert = 0$.
Further, using the Cauchy-Schwarz inequality and the fact that $(z_k)_{k\geq 0}$ is bounded, we obtain that
\begin{equation*}
\lim\limits_{k \to + \infty} k \left\langle z_k-z_{*}, \xi_k+C \left( z_{k} \right) \right\rangle = 0.
\end{equation*}
Finally, it follows from \cref{bb1} that 
\begin{align*}
0 \leq \lim\limits_{k \to + \infty} k  \|z_k-J_{\gamma M}\big(z_k-\gamma C \left( z_{k} \right)\big)\| \leq \gamma \lim\limits_{k \to + \infty} k \dist(0, M(z_{k}) + C ( z_{k} ) ) \leq \gamma \lim\limits_{k \to + \infty} k \left\lVert \xi_k+C \left( z_{k} \right) \right\rVert = 0, 
\end{align*}
which completes the proof.
\qedhere
\end{proof}

\section{A fast primal-dual full splitting algorithm}
\label{sec:convex}

The aim of this section is to use the Fast Forward-Backward algorithm to construct a primal-dual full splitting algorithm for nonsmooth convex optimization problems with linear constraints that exhibits fast convergence rates as well as convergence of the iterates. 

The problem under consideration reads
\begin{align}
\label{convexpro}
\begin{array}{rl}
\min\limits_{x\in \sX} & f \left( x \right)+h \left( x \right), \\
\textrm{subject to} 	& Ax = b,
\end{array}
\end{align}
where $\sX$ and $\sY$ are real Hilbert spaces, $f \colon \sX \rightarrow \mathbb{R} \cup \left\lbrace + \infty \right\rbrace$ is a proper, lower semicontinuous, and convex function, $h \colon \sX \rightarrow \mathbb{R}$ convex differentiable function such that $\nabla h$ is $\frac{1}{\beta}${-}Lipschitz continuous  with $\beta > 0$, and $A \colon \sX\rightarrow \sY$ is a  linear continuous operator.
Consider the saddle point problem associated to problem \eqref{convexpro}
\begin{equation*}
\min_{x \in \sX} \max_{\lambda \in \sY} \Lag \left( x , \lambda \right),
\end{equation*}
where $\Lag \colon \sX \times \sY \to \sR$ denotes the associated \emph{Lagrangian function}
\begin{equation*}
\Lag \left( x , \lambda \right) := f \left( x \right)+h \left( x \right) + \left\langle \lambda , Ax - b \right\rangle .
\end{equation*}
A pair $\left( x_{*} , \lambda_{*} \right) \in \sX \times \sY$ is said to be a \emph{saddle point} of the Lagrangian function $\Lag$ if for every $\left( x , \lambda \right) \in \sX \times \sY$
\begin{equation*}
\Lag \left( x_{*} , \lambda \right) \leq \Lag \left( x_{*} , \lambda_{*} \right) \leq \Lag \left( x , \lambda_{*} \right).
\end{equation*}
If $\left( x_{*} , \lambda_{*} \right) \in \sX \times \sY$ is a saddle point of $\Lag$, then $x_{*} \in \sX$ is an optimal solution of \eqref{convexpro} and $\lambda_{*} \in \sY$ is an optimal solution of its Lagrange dual problem. If $x_{*} \in \sX$ is an optimal solution of \eqref{convexpro} and a suitable constraint qualification is fulfilled (see, for instance, \cite{Bot2010,bauschkebook2011}), then there exists an optimal solution $\lambda_{*} \in \sY$ of the Lagrange dual problem of \eqref{convexpro} such that $\left( x_{*} , \lambda_{*} \right) \in \sX \times \sY$ is a saddle point of $\Lag$. The set of saddle points of $\Lag$, called the \emph{set of primal-dual optimal solutions} of \eqref{convexpro}, will be denoted by $\sol$.  

The system of primal-dual optimality conditions for \eqref{convexpro} reads
\begin{equation}
\label{intro:opt-Lag}
\left( x_{*} , \lambda_{*} \right) \in \sol
\Leftrightarrow \begin{cases}
0 \in \partial_{x} \Lag \left( x_{*} , \lambda_{*} \right)  \\
0 = \nabla_{\lambda} \Lag \left( x_{*} , \lambda_{*} \right) 
\end{cases} \Leftrightarrow \begin{cases}
0 \in  \partial f \left( x_{*} \right) +\nabla h \left( x_{*} \right)+ A^{*} \lambda_{*} 	\\
0 = Ax_{*} - b 												
\end{cases},
\end{equation}
where $A^{*} : \sY \rightarrow \sX$ denotes the adjoint operator of $A$.

For $\bH = \sX \bigoplus \sY$, the problem of finding a primal-dual solution of \eqref{convexpro} can be equivalently written as the monotone inclusion
\begin{equation}\label{no100}
\textrm{find} \quad \bz\in \bH \quad \textrm{such} \quad \textrm{that} \quad \boldsymbol{0}\in \bP(\bz)+\bQ(\bz),
\end{equation}
where
\begin{equation}\
\left\{
\begin{aligned}\label{lef15}
& \bP \colon \bH \rightarrow 2^{\bH} \colon \left( x , \lambda \right) \mapsto \big(\partial f(x)+A^{*} \lambda\big)\times\big(-Ax +b\big), \\
& \bQ \colon \bH \rightarrow \bH \colon \left( x , \lambda \right) \mapsto (\nabla h(x),0).
\end{aligned}\right.
\end{equation}
The operator $\bP$ as the sum of the maximally monotone operators $ \left( x , \lambda \right) \mapsto \partial f(x)\times \left\lbrace b \right\rbrace$ and $\left( x , \lambda \right) \mapsto (A^{*} \lambda,-Ax)$ is also a maximally monotone operator. In view of the Baillon-Haddad theorem, $\nabla h$ is $\beta$-cocoercive, which means that $\bQ$ is also $\beta$-cocoercive.

Inspired by \cite{Vu2011}, we define the self-adjoint operator
\begin{align}\label{def:N}
& \qquad \bN \colon \bH \rightarrow \bH, \quad (x,\lambda)\mapsto \left( \frac{1}{\tau} x-A^{*} \lambda,-A x+\frac{1}{\sigma} \lambda \right),
\end{align}
which is $\rho$-strongly positive with modulus 
$$\rho:=\min \left\lbrace \frac{1}{\tau},\frac{1}{\sigma} \right\rbrace \left(1-\sqrt{\tau\sigma\|A\|^2}\right).$$ Therefore,
its inverse $\bN^{-1}$ exists and $\|\bN^{-1}\|\leq\frac{1}{\rho}$. This means that $\bN^{-1}\bP$ is a maximally monotone operator and $\bN^{-1}\bQ$ is a $\beta\rho$-cocoercive operator.  The algorithm below is the Fast Forward-Backward algorithm applied to the problem of finding a zero of $\bN^{-1}\bP + \bN^{-1}\bQ$.

\begin{mdframed}
	\begin{algo}
		\label{algo:fpdVu}
		Let $\alpha >2$, $\tau,\sigma>0$ such that $$1<\min \left\lbrace \frac{1}{\tau},\frac{1}{\sigma} \right\rbrace \left(1-\sqrt{\tau\sigma\|A\|^2}\right) \frac{8(\alpha-1)}{(5\alpha-2)}\beta,$$
$x_{0}, v_{0}\in \sX $, $\lambda_{0}, \eta_{0} \in \sY$, and $x_1 := \textnormal{prox}_{\tau f}\left(v_{0}-\tau A^*\eta_0-\tau \nabla h\left( x_{0} \right) \right)$ and $\lambda_{1} := \eta_0 + \sigma \left( Ax_{1} - b \right) + \sigma A \left( x_{1} - v_{0} \right)$. We set
		\begin{equation}\label{Main_algo}
			(\forall k \geq 1) \quad \begin{dcases}
				v_{k}		& := x_{k}+\left(1-\frac{\alpha}{k+\alpha}\right)\left( x_{k}-x_{k-1} \right)+\left(1-\frac{\alpha}{2(k+\alpha)}\right) \left( v_{k-1} - x_{k} \right) ;\\
				\eta_{k}	& := \lambda_k+\left(1-\frac{\alpha}{k+\alpha}\right)(\lambda_k-\lambda_{k-1})+\left(1-\frac{\alpha}{2(k+\alpha)}\right) \left( \eta_{k-1} - \lambda_{k} \right) ;\\
				x_{k+1}		& := \textnormal{prox}_{\tau f}\left(v_{k}-\tau A^*\eta_k-\tau \nabla h\left( x_{k} \right) \right);\\
				\lambda_{k+1}	& := \eta_k + \sigma \left( Ax_{k+1} - b \right) + \sigma A \left( x_{k+1} - v_{k} \right).\nonumber
			\end{dcases}
		\end{equation}
	\end{algo}
\end{mdframed}
Based on \cref{thm:ex:conv}, we obtain the following convergence and convergence rate results.

\begin{thm}\label{thm:comp:conv}
	Let $(x_{*},\lambda_{*}) \in {\cal S}$, and $\left(x_{k},\lambda_{k} \right)_{k \geq 0}$ be the sequence generated by Algorithm \ref{algo:fpdVu}. Then the following statements are true:
	\begin{enumerate}
		\item 
		the sequence $\left(x_{k} , \lambda_{k} \right) _{k \geq 0}$ converges weakly to a primal-dual solution of \eqref{convexpro};
		
		\item 
		we have
		\begin{align*}
			\left\lVert x_{k} - x_{k-1} \right\rVert = o \left( \dfrac{1}{k} \right)	
			& \qquad \textrm{ and } \qquad
			\left\lVert \lambda_{k} - \lambda_{k-1} \right\rVert = o \left( \dfrac{1}{k} \right) \nonumber \\
			\left\lVert w_k + \nabla h\left( x_{k} \right) \right\rVert = o \left( \dfrac{1}{k} \right)
			& \qquad \textrm{ and } \qquad
			\left\lVert Ax_{k} - b \right\rVert = o \left( \dfrac{1}{k} \right) \nonumber \\
			\Lag \left( x_{k} , \lambda_{*} \right)-\Lag \left( x_{*} , \lambda_{k} \right) = o \left( \dfrac{1}{k } \right)
			& \qquad \textrm{ and } \qquad
			\left\lvert \left( f+h \right) \left( x_{k} \right) - \left( f+h \right) \left( x_{*} \right) \right\rvert = o \left( \dfrac{1}{k} \right)
		\end{align*}
as $k \to + \infty$, where, for every $k \geq 1$
		\begin{equation}
			w_{k+1} := \frac{1}{\tau} \left( v_{k}-x_{k+1} \right) +A^{*} \left( \lambda_{k+1} - \eta_{k} \right) -\nabla h\left( x_{k} \right) \in \partial f(x_{k+1}) +A^*\lambda_{k+1}.\nonumber
		\end{equation}
	\end{enumerate}
\end{thm}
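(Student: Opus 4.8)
The plan is to recognize \cref{algo:fpdVu} as \cref{algo:ffb} applied, with step size $\gamma=1$, to the monotone inclusion $0\in\bN^{-1}\bP(\bz)+\bN^{-1}\bQ(\bz)$ on the Hilbert space $\bH=\sX\oplus\sY$ equipped with the inner product $\langle\cdot,\cdot\rangle_{\bN}:=\langle\bN\cdot,\cdot\rangle$. First I would record the standard facts: by the $\rho$-strong positivity of $\bN$ (with $\rho=\min\{1/\tau,1/\sigma\}(1-\sqrt{\tau\sigma\|A\|^2})$), the operator $\bN^{-1}\bP$ is maximally monotone and $\bN^{-1}\bQ$ is $\beta\rho$-cocoercive on $(\bH,\langle\cdot,\cdot\rangle_{\bN})$, the norm $\|\cdot\|_{\bN}$ is equivalent to the canonical product norm, and consequently the weak topology of $(\bH,\langle\cdot,\cdot\rangle_{\bN})$ coincides with that of the product space. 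The step-size hypothesis required by \cref{thm:ex:conv}, namely $0<\gamma<\tfrac{8(\alpha-1)}{5\alpha-2}(\beta\rho)$, reduces with $\gamma=1$ to exactly the standing assumption $1<\min\{1/\tau,1/\sigma\}(1-\sqrt{\tau\sigma\|A\|^2})\tfrac{8(\alpha-1)}{5\alpha-2}\beta$ imposed in \cref{algo:fpdVu}.

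\textbf{Identification of the iterations (main obstacle).} The genuinely delicate computation is checking that the iteration of \cref{algo:fpdVu} is precisely \eqref{algo:z-y} for this data. Writing $z=(x,\lambda)$, $y=(v,\eta)$, the inertial/correction block is componentwise identical, so what must be verified is that the forward-backward step $z_{k+1}=J_{\bN^{-1}\bP}(y_k-\bN^{-1}\bQ(z_k))$ unfolds into the prox/linear updates for $x_{k+1}$ and $\lambda_{k+1}$. Using $J_{\bN^{-1}\bP}=(\Id+\bN^{-1}\bP)^{-1}=(\bN+\bP)^{-1}\bN$ together with the explicit forms of $\bN$ in \eqref{def:N} and of $\bP,\bQ$ in \eqref{lef15}, this is the classical V\~u/Condat reduction: the first block gives $x_{k+1}=\prox_{\tau f}(v_k-\tau A^{*}\eta_k-\tau\nabla h(x_k))$, and substituting this back into the second block gives $\lambda_{k+1}=\eta_k+\sigma(Ax_{k+1}-b)+\sigma A(x_{k+1}-v_k)$; the same algebra applied to the initialization yields $z_1=J_{\bN^{-1}\bP}(y_0-\bN^{-1}\bQ(z_0))$. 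Moreover the element $\xi_{k+1}=\tfrac1\gamma(y_k-z_{k+1})-\bN^{-1}\bQ(z_k)$ supplied by \cref{rmk:sub} satisfies $\bN\xi_{k+1}=(w_{k+1},\,-Ax_{k+1}+b)$ with $w_{k+1}\in\partial f(x_{k+1})+A^{*}\lambda_{k+1}$, which is the term named in the statement.

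\textbf{Convergence and velocity/residual rates.} With the identification in place, item (i) is immediate from \cref{thm:ex:conv}(i): $(\bz_k)_{k\ge0}$, with $\bz_k:=(x_k,\lambda_k)$, converges weakly to a zero $\bz_*=(x_*,\lambda_*)$ of $\bN^{-1}\bP+\bN^{-1}\bQ$, which by \eqref{intro:opt-Lag} is exactly a primal-dual optimal solution, and weak convergence in $\|\cdot\|_{\bN}$ is weak convergence in the product space. For item (ii), \cref{thm:ex:conv}(ii) yields $\|\bz_k-\bz_{k-1}\|_{\bN}=o(1/k)$ and, since $\xi_k\in\bN^{-1}\bP(\bz_k)$, also $\|\xi_k+\bN^{-1}\bQ(\bz_k)\|_{\bN}=o(1/k)$; passing to the product norm and reading off the two blocks of $\bN\xi_k+\bQ(\bz_k)=(w_k+\nabla h(x_k),\,-Ax_k+b)$ gives at once $\|x_k-x_{k-1}\|=o(1/k)$, $\|\lambda_k-\lambda_{k-1}\|=o(1/k)$, $\|w_k+\nabla h(x_k)\|=o(1/k)$ and $\|Ax_k-b\|=o(1/k)$.

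\textbf{From residuals to function values.} It remains to convert these into rates for the Lagrangian gap and the objective error. Set $g_k:=\Lag(x_k,\lambda_*)-\Lag(x_*,\lambda_k)$; the two saddle-point inequalities for $(x_*,\lambda_*)$ give $g_k\ge0$. Since $w_k-A^{*}\lambda_k\in\partial f(x_k)$ and $h$ is convex, the subgradient inequalities at $x_*$ combined with $Ax_*=b$ give $g_k\le\langle w_k+\nabla h(x_k),x_k-x_*\rangle+\langle\lambda_*-\lambda_k,Ax_k-b\rangle$; as $(x_k)_{k\ge0}$ and $(\lambda_k)_{k\ge0}$ are bounded (they converge weakly), the right-hand side is a sum of two $o(1/k)$ quantities, whence $g_k=o(1/k)$. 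Finally, using $Ax_*=b$ once more, $(f+h)(x_k)-(f+h)(x_*)=g_k-\langle\lambda_*,Ax_k-b\rangle$, and both terms on the right are $o(1/k)$, so $|(f+h)(x_k)-(f+h)(x_*)|=o(1/k)$. Apart from the resolvent/initialization bookkeeping in the renormed space flagged above, every step here is a direct transcription of \cref{thm:ex:conv} together with elementary convexity and saddle-point inequalities.
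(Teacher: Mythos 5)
Your proposal is correct and follows essentially the same route as the paper: identify \cref{algo:fpdVu} as \cref{algo:ffb} with $\gamma=1$ applied to $0\in\bN^{-1}\bP(\bz)+\bN^{-1}\bQ(\bz)$ (with $\bN^{-1}\bP$ maximally monotone and $\bN^{-1}\bQ$ $\beta\rho$-cocoercive in the renormed space), unfold the resolvent step into the prox/dual updates, transfer the weak convergence and the $o(1/k)$ rates from \cref{thm:ex:conv}, and convert the residual rates into rates for the Lagrangian gap and the objective by the same convexity/saddle-point estimates, since your bound $\langle w_k+\nabla h(x_k),x_k-x_*\rangle+\langle\lambda_*-\lambda_k,Ax_k-b\rangle$ is exactly the paper's $\langle \bz_k-\bz_*,\bN\bxi_k+\bQ(\bz_k)\rangle$. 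The only cosmetic difference is the last step, where you use the exact identity $(f+h)(x_k)-(f+h)(x_*)=\Lag(x_k,\lambda_*)-\Lag(x_*,\lambda_k)-\langle\lambda_*,Ax_k-b\rangle$ instead of the paper's two-sided sandwich via $-A^*\lambda_*\in\partial(f+h)(x_*)$; these are equivalent because $Ax_*=b$.
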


\begin{proof}
We apply \cref{algo:ffb} with step size $\gamma=1$ to the problem
\begin{equation}\label{renormedincl}
\textrm{find} \quad \bz\in \bH \quad \textrm{such} \quad \textrm{that} \quad \boldsymbol{0}\in \bN^{-1}\bP(\bz)+ \bN^{-1}\bQ(\bz).
\end{equation}
For the initial points $\boldsymbol{z}_{0}, \boldsymbol{y}_{0} \in \bH$ and  $\boldsymbol{z}_{1}:= J_{\boldsymbol{N}^{-1}\boldsymbol{P}} \Big( \boldsymbol{y}_{0} -\boldsymbol{N}^{-1}\boldsymbol{Q}(\boldsymbol{z}_{0})\Big)$, this gives rise to the following iterative scheme
\begin{equation}
(\forall k \geq 1) \quad  \left\{
\begin{aligned}\label{2222}
\boldsymbol{y}_{k}:= & \boldsymbol{z}_{k} + \left( 1 - \dfrac{\alpha}{k + \alpha} \right) \left( \boldsymbol{z}_{k} - \boldsymbol{z}_{k-1} \right) + \left(1-\frac{\alpha}{2(k+\alpha)}\right) \left(\boldsymbol{y}_{k-1} - \boldsymbol{z}_{k}\right);\\
\boldsymbol{z}_{k+1}:= & J_{\boldsymbol{N}^{-1}\boldsymbol{P}} \Big( \boldsymbol{y}_{k} -\boldsymbol{N}^{-1}\boldsymbol{Q}(\boldsymbol{z}_{k})\Big). \\
\end{aligned}\right.
\end{equation}
The second equation (\ref{2222}) can be equivalently written  for every $k \geq 1$ as
\begin{equation*}
\boldsymbol{z}_{k+1}+\boldsymbol{N}^{-1}\boldsymbol{P}(\boldsymbol{z}_{k+1})\ni  \boldsymbol{y}_{k} -\boldsymbol{N}^{-1}\boldsymbol{Q}(\boldsymbol{z}_{k})
\end{equation*}
and further as
\begin{equation}\label{cf22}
\boldsymbol{N} \boldsymbol{z}_{k+1}+\boldsymbol{P}(\boldsymbol{z}_{k+1})\ni  \boldsymbol{N} \boldsymbol{y}_k-\boldsymbol{Q}(\boldsymbol{z}_k).
\end{equation}
By denoting $\boldsymbol{z}_k:=(x_{k},\lambda_{k})$ and $\boldsymbol{y}_k:=(v_{k},\eta_{k})$ for every $k \geq 0$, one can see that the first equation in (\ref{2222}) corresponds to the first two equations in the iterative scheme of Algorithm \ref{algo:fpdVu}. On the other hand, by making use of the definitions of $\boldsymbol{N}$, $\boldsymbol{P}$ and $\boldsymbol{Q}$, \eqref{cf22} is nothing else than 
\begin{equation*}\
\left\{
\begin{aligned}
&\frac{1}{\tau}x_{k+1}+\partial f(x_{k+1})\ni\frac{1}{\tau} v_{k}- A^*\eta_{k}- \nabla h\left( x_{k} \right),\\
&\frac{1}{\sigma}\lambda_{k+1}+b-2Ax_{k+1}=- Av_{k} +\frac{1}{\sigma}\eta_{k}
\end{aligned}\right.
\end{equation*}
or, equivalently,
\begin{equation*}
\left\{
\begin{aligned}
&x_{k+1}=\textnormal{prox}_{\tau f}\left(v_{k}-\tau A^*\eta_k
-\tau \nabla h\left( x_{k} \right) \right),\\
&\lambda_{k+1}	 = \eta_k + \sigma \left( Ax_{k+1} - b \right) + \sigma A \left( x_{k+1} - v_{k} \right),
\end{aligned}\right.
\end{equation*}
which corresponds to the last equations in the iterative scheme of Algorithm \ref{algo:fpdVu}. The initial condition $\boldsymbol{\xi}_1\in \boldsymbol{N}^{-1}\boldsymbol{P}(\boldsymbol{z}_{1})$ is also equivalent the initialization made for $x_1$ and $\lambda_1$. This proves that Algorithm \ref{algo:fpdVu} is nothing else than Algorithm \ref{algo:z-y} applied to a particular monotone inclusion problem, which means that it inherits its convergence properties.

For the formulation of the convergence rates, the sequence $(\boldsymbol{\xi}_{k})_{k \geq 1}$ introduced in Propostion \ref{rmk:sub} plays a crucial role. In the context of the iterative scheme \eqref{2222}, this is defined for every $k \geq 1$ as
$$ \boldsymbol{\xi}_{k}:=\boldsymbol{y}_{k-1}  - \boldsymbol{N}^{-1}\boldsymbol{Q}(\boldsymbol{z}_{k-1})-\boldsymbol{z}_{k}$$
or, equivalently,
\begin{align} \label{cf23}
\boldsymbol{N}\boldsymbol{\xi}_{k}=\boldsymbol{N} \boldsymbol{y}_{k-1}-\boldsymbol{Q}(\boldsymbol{z}_{k-1})-\boldsymbol{N} \boldsymbol{z}_{k}.
\end{align}
By denoting $\boldsymbol{N}\boldsymbol{\xi}_k:= (w_k,\zeta_{k})$,  the formula \eqref{cf23} can be equivalently written as
\begin{equation}\
\left\{
\begin{aligned}\label{inequ2001}
w_{k}&=\frac{1}{\tau}v_{k-1}-A^*\eta_{k-1} - \nabla h\left( x_{k-1} \right)-\frac{1}{\tau}x_{k}+A^*\lambda_{k},\\
\zeta_{k}&= -Av_{k-1}+\frac{1}{\sigma}\eta_{k-1}+Ax_{k} -\frac{1}{\sigma}\lambda_{k}.
\end{aligned}\right.
\end{equation}
Furthermore, according to \eqref{algo:inc-M}, for every $k \geq 1$ it holds $\boldsymbol{N}\boldsymbol{\xi}_{k} \in \boldsymbol{P} \boldsymbol{z}_{k}$ or, equivalently,
\begin{equation}\
\left\{
\begin{aligned}\label{w in sf}
w_{k}&\in \partial f(x_{k})+A^*\lambda_{k},\\
\zeta_{k}&= -Ax_{k}+b.
\end{aligned}\right.
\end{equation}

(i) \cref{thm:ex:conv} (i) guarantees that the sequence $\left( \bz_{k} \right) _{k \geq 0}$  generated by the iterative scheme \eqref{2222} converges weakly to an element of $\textnormal{zer} \left( \bN^{-1}\bP + \bN^{-1}\bQ \right) = \textnormal{zer} \left( \bP + \bQ \right)$. In other words, the sequence $\left( x_{k} , \lambda_{k} \right) _{k \geq 0}$ generated by Algorithm \ref{algo:fpdVu} converges weakly to a primal-dual optimal solution of \eqref{convexpro}.

(ii) \cref{thm:ex:conv} (ii) yields
\begin{align*}
\left\lVert \bz_k-\bz_{k-1} \right\rVert = o \left( \dfrac{1}{k } \right),\,\quad  
\left\langle \bz_{k} - \bz_{*} , \bxi_k+\bN^{-1}\bQ(\bz_{k})\right\rangle = o \left( \dfrac{1}{k } \right),\,\quad  
\left\lVert \bxi_k+\bN^{-1}\bQ(\bz_{k}) \right\rVert = o \left( \dfrac{1}{k} \right)
\end{align*}
as $k \rightarrow +\infty$.
Since
\begin{equation*}
\left\lVert x_{k}-x_{k-1} \right\rVert + \left\lVert \lambda_{k}-\lambda_{k-1} \right\rVert \leq \sqrt{2} \sqrt{\left\lVert x_{k}-x_{k-1} \right\rVert ^{2} + \left\lVert \lambda_{k}-\lambda_{k-1} \right\rVert ^{2}} = \sqrt{2} \left\lVert \bz_k-\bz_{k-1} \right\rVert \ \forall k \geq 1,
\end{equation*}
we obtain
\begin{align*}
\left\lVert x_k - x_{k-1} \right\rVert = o \left( \dfrac{1}{k } \right) \quad \mbox{and} \quad \left\lVert \lambda_k-\lambda_{k-1} \right\rVert = o \left( \dfrac{1}{k } \right) \ \mbox{as} \ k \rightarrow +\infty.
\end{align*}
Since
\begin{align*}
0&\leq \lim_{k\rightarrow +\infty} k\left\lVert \bN\bxi_k+\bQ(\bz_{k}) \right\rVert
\leq \|\bN\|\lim_{k\rightarrow +\infty}k\left\lVert \bxi_k+\bN^{-1}\bQ(\bz_{k}) \right\rVert=0,
\end{align*}
it yields
\begin{align*}
& \left\lVert \bN\bxi_k+\bQ(\bz_{k}) \right\rVert = o \left( \dfrac{1}{k} \right) \ \mbox{as} \ k \rightarrow +\infty,
\end{align*}
and further, by the boundedness of $\left( \bz_{k} \right) _{k \geq 0}$,
\begin{align*}
&\left\langle \bz_{k} - \bz_{*} , \bN\bxi_k+\bQ(\bz_{k})\right\rangle = o \left( \dfrac{1}{k} \right) \ \mbox{as} \ k \rightarrow +\infty.
\end{align*}
For $\bz_k=(x_{k},\lambda_{k})$ and  $\bN\bxi_k= (w_k,b-Ax_{k})$, where $w_k =\frac{1}{\tau}v_{k-1}-A^* \eta_{k-1}-\nabla h\left( x_{k-1} \right)-\frac{1}{\tau}x_{k}+A^*\lambda_{k}$, this yields 
\begin{align}\label{defiopt}
& \left\lVert \bN\bxi_k+\bQ(\bz_{k}) \right\rVert = \left\lVert \begin{pmatrix} w_k + \nabla h\left( x_{k} \right) , b-Ax_{k} \end{pmatrix} \right\rVert = o \left( \dfrac{1}{k} \right) \ \mbox{as} \ k \rightarrow +\infty
\end{align}
and
\begin{align}\label{Inrate:1}
&\left\langle \begin{pmatrix} x_{k} -  x_{*} , \lambda_{k} - \lambda_{*} \end{pmatrix} , \begin{pmatrix} w_k + \nabla h\left( x_{k} \right) , b-Ax_{k} \end{pmatrix} \right\rangle = o \left( \dfrac{1}{k } \right) \ \mbox{as} \ k \rightarrow +\infty,
\end{align}
respectively. By arguing as above, we obtain
\begin{equation}\label{rateAb}
\left\lVert w_k + \nabla h\left( x_{k} \right) \right\rVert = o \left( \dfrac{1}{k } \right) \quad \mbox{and} \quad \left\lVert Ax_{k} - b \right\rVert  = o \left( \dfrac{1}{k} \right) \ \mbox{as} \ k \rightarrow +\infty.
\end{equation}
Recall that from \eqref{w in sf} we have $w_{k}\in \partial f\left( x_{k} \right)+A^* \lambda_{k}$.  Using the convexity of $f$ and $h$,  for every $k \geq 1$ it yields
\begin{align*}
&\langle w_k - A^*\lambda_k,x_{k}-x_{*}\rangle \geq f\left( x_{k} \right)-f(x_{*}), \\
&\langle \nabla h\left( x_{k} \right),x_{k}-x_{*}\rangle \geq h\left( x_{k} \right)-h(x_{*}),
\end{align*}
and, by adding the two inequalities,
\begin{align}
& \left\langle \bz_{k} - \bz_{*} , \bN\bxi_k+\bQ(\bz_{k})\right\rangle \nonumber \\
= \ 	& \langle w_k + \nabla h\left( x_{k} \right),x_{k}-x_{*}\rangle+\langle b-Ax_{k},\lambda_{k}-\lambda_{*}\rangle \nonumber \\
= \ 	& \langle w_k - A^*\lambda_k, x_{k}-x_{*}\rangle+\langle \nabla h\left( x_{k} \right),x_{k}-x_{*}\rangle+\langle b-Ax_{k},\lambda_{k}-\lambda_{*}\rangle+\langle A^*\lambda_k, x_{k}-x_{*} \rangle \nonumber\\
\geq \ 	& f\left( x_{k} \right)-f(x_{*}) + h\left( x_{k} \right)-h(x_{*}) + \langle \lambda_{*} , Ax_{k}-b\rangle \label{Inq:pre func} \nonumber\\
= \ 	& \Lag \left( x_{k} , \lambda_{*} \right)-\Lag \left( x_{*} , \lambda_{k} \right) \geq 0.
\end{align}
Combining this esitmate with (\ref{Inrate:1}) yields
\begin{align*}
&\Lag \left( x_{k} , \lambda_{*} \right)-\Lag \left( x_{*} , \lambda_{k} \right) = o \left( \dfrac{1}{k } \right) \ \mbox{as} \ k \rightarrow +\infty.
\end{align*}
Furthermore, from \eqref{Inq:pre func} and the Cauchy-Schwarz inequality, we have for every $k \geq 1$
\begin{align*}
\left\langle \bz_{k} - \bz_{*} , \bN\bxi_k+\bQ(\bz_{k})\right\rangle 
\geq \ & f\left( x_{k} \right)-f(x_{*}) + h\left( x_{k} \right)-h(x_{*}) + \langle \lambda_{*} , Ax_{k}-b\rangle\\
\geq \ & (f+h) \left( x_{k} \right) - (f+h)(x_{*}) - \left\lVert \lambda_{*}  \right\rVert \left\lVert Ax_{k}-b \right\rVert,
\end{align*}
while on the other hand, since $-A^*\lambda_{*} \in \partial (f+h) \left( x_{*} \right)$ and $Ax_{*} = b$, we deduce
\begin{align*}
(f+h) \left( x_{k} \right) &\geq (f+h) \left( x_{*} \right) - \langle A^*\lambda_{*} , x_{k} - x_{*} \rangle = (f+h)(x_{*}) - \langle \lambda_{*} , Ax_{k} - b \rangle \\
&\geq (f+h)(x_{*}) - \left\lVert \lambda_{*}  \right\rVert \left\lVert Ax_{k}-b \right\rVert.
\end{align*}
Combining these relations, we obtain for every $k \geq 1$
\begin{equation*}
- \left\lVert \lambda_{*} \right\rVert \left\lVert Ax_{k}-b \right\rVert \leq (f+h) \left( x_{k} \right) - (f+h)(x_{*}) \leq \left\lVert \lambda_{*} \right\rVert \left\lVert Ax_{k}-b \right\rVert + \left\langle \bz_{k} - \bz_{*} , \bN\bxi_k+\bQ(\bz_{k})\right\rangle .
\end{equation*}
Multiplying the above relation by $k$, letting $k$ converge to $+ \infty$, and using \eqref{rateAb}, the convergence rate for the objective function value follows.
\end{proof}

\begin{rmk}\label{rem33}
An equivalent formulation for \cref{algo:fpdVu} can be provided by using instead of \eqref{algo:z-y} the iterative scheme
\eqref{algo:z-xi} when solving \eqref{renormedincl}. For initial points $\bz_0, \bz_1 \in \bH$ and $\bxi_1 \in M(\bz_1)$ and step size $\gamma=1$, \eqref{algo:z-xi} reads for every $k \geq 1$
\begin{equation*}\
\left\{
\begin{aligned}\label{a:2222}
& \bz_{k+1}:=J_{\bN^{-1}\bP} \Biggl( \bz_{k} -\bN^{-1}\bQ(\bz_{k}) + \left( 1 - \dfrac{\alpha}{k + \alpha} \right) \left( \bz_{k} - \bz_{k-1} \right) + \frac{2k+\alpha}{2(k+\alpha)} \left(\bxi_k +\bN^{-1}\bQ(\bz_{k-1})\right)\Biggr); \\
& \bxi_{k+1}:=\bz_{k} - \bN^{-1}\bQ(\bz_{k}) + \left( 1 - \dfrac{\alpha}{k + \alpha} \right) ( \bz_{k} - \bz_{k-1} )+ \frac{2k+\alpha}{2(k+\alpha)} \left(\bxi_k  +\bN^{-1}\bQ(\bz_{k-1})\right) -\bz_{k+1},
\end{aligned}\right.
\end{equation*}
which is equivalent to
\begin{equation*}\
\left\{
\begin{aligned}\label{a:equa:zk}
& \bz_{k+1}+\bN^{-1}\bP(\bz_{k+1}) \ni \bz_{k} -\bN^{-1}\bQ(\bz_{k}) + \left( 1 - \dfrac{\alpha}{k + \alpha} \right) \left( \bz_{k} - \bz_{k-1} \right)\\
& \qquad \qquad \qquad \qquad \qquad + \frac{2k+\alpha}{2(k+\alpha)} \left(\bxi_k +\bN^{-1}\bQ(\bz_{k-1})\right);\\
& \bxi_{k+1} =  \bz_{k}  - \bN^{-1}\bQ(\bz_{k}) + \left( 1 - \dfrac{\alpha}{k + \alpha} \right) ( \bz_{k} - \bz_{k-1} )+ \frac{2k+\alpha}{2(k+\alpha)} \left(\bxi_k  +\bN^{-1}\bQ(\bz_{k-1})\right) -\bz_{k+1},\\
\end{aligned}\right.
\end{equation*}
and further to
\begin{equation}\
\left\{
\begin{aligned}\label{a:cf22}
& \bN \bz_{k+1}+\bP(\bz_{k+1})  \ni  \bN\left(\bz_k+\big(1-\frac{\alpha}{k+\alpha}\big)(\bz_k-\bz_{k-1}) +\frac{2k+\alpha}{2(k+\alpha)}\bxi_k \right) +\frac{2k+\alpha}{2(k+\alpha)}\bQ(\bz_{k-1})-\bQ(\bz_k);\\
& \bN\bxi_{k+1} =\bN\left(\bz_k-\bz_{k+1}+\Big(1-\frac{\alpha}{k+\alpha}\Big)(\bz_k-\bz_{k-1}) +\frac{2k+\alpha}{2(k+\alpha)} \bxi_k \right) +\frac{2k+\alpha}{2(k+\alpha)}\bQ(\bz_{k-1})-\bQ(\bz_k).
\end{aligned}\right.
\end{equation}
According to the definitions of $\bN$, $\bP$ and $\bQ$ in \eqref{lef15} and \eqref{def:N}, and using $\bz_k=(x_{k},\lambda_{k})$ and $\bN\bxi_k= (w_k,\zeta_{k})$ in \eqref{cf22},  we obtain an equivalent iterative scheme that reads for every $k \geq 1$
\begin{equation}\
 \left\{
\begin{aligned}\label{a:inequ200}
\frac{1}{\tau}x_{k+1}+\partial f(x_{k+1})
&\ni\frac{1}{\tau} \left( x_{k}+\left(1-\frac{\alpha}{k+\alpha}\right)\left( x_{k}-x_{k-1} \right) \right) - A^*\left(\lambda_{k}+\left(1-\frac{\alpha}{k+\alpha}\right) \left( \lambda_{k}-\lambda_{k-1} \right)\right) \\
& \quad +\frac{2k+\alpha}{2(k+\alpha)} w_{k}  +\frac{2k+\alpha}{2(k+\alpha)} \nabla h\left( x_{k-1} \right) - \nabla h\left( x_{k} \right);\\
\frac{1}{\sigma}\lambda_{k+1}+b-2Ax_{k+1} 
&= - A\left(x_{k}+\left(1-\frac{\alpha}{k+\alpha}\right)\left( x_{k}-x_{k-1} \right)\right) \\
& \quad +\frac{1}{\sigma}\left(\lambda_{k}+\left(1-\frac{\alpha}{k+\alpha}\right)\left( \lambda_{k}-\lambda_{k-1} \right) \right)
+\frac{2k+\alpha}{2(k+\alpha)} \zeta_{k};\\
w_{k+1}			& =\frac{1}{\tau}\left(x_{k}-x_{k+1}+\left(1-\frac{\alpha}{k+\alpha}\right)\left( x_{k}-x_{k-1} \right)\right) \\
& \quad -A^*\left( \lambda_{k}-\lambda_{k+1}+\left(1-\frac{\alpha}{k+\alpha}\right)\left( \lambda_{k}-\lambda_{k-1} \right)\right) \\
& \quad +\frac{2k+\alpha}{2(k+\alpha)}w_k +\frac{2k+\alpha}{2(k+\alpha)}\nabla h\left( x_{k-1} \right)-\nabla h\left( x_{k} \right);\\
\zeta_{k+1} 	& = -A\left(x_{k}-x_{k+1}+\left(1-\frac{\alpha}{k+\alpha}\right)\left( x_{k}-x_{k-1} \right)\right)\\ & \quad +\frac{1}{\sigma}\left(\lambda_{k}-\lambda_{k+1}+\left(1-\frac{\alpha}{k+\alpha}\right)\left( \lambda_{k}-\lambda_{k-1} \right)\right) +\frac{2k+\alpha}{2(k+\alpha)}\zeta_{k}.
\end{aligned}\right.
\end{equation}
According to \eqref{w in sf},
\begin{equation*}
\zeta_{k}= -Ax_{k}+b \quad \forall k \geq 1,
\end{equation*}
which means that we can  formulate \eqref{a:inequ200} by making use of only the first three sequences. Noticing that the first equation in \eqref{a:inequ200} is equivalent to
\begin{align*}
&x_{k+1}=\textnormal{prox}_{\tau f}\Bigg(x_{k}-\tau \left( \nabla h\left( x_{k} \right) + A^* \lambda_{k} \right) +\left(1-\frac{\alpha}{k+\alpha}\right)\left( x_{k}-x_{k-1} \right)-\tau A^*\left(1-\frac{\alpha}{k+\alpha}\right) \left( \lambda_{k}-\lambda_{k-1} \right)\nonumber\\
&\qquad\qquad \qquad\qquad+\frac{2k+\alpha}{2(k+\alpha)}\tau \left( w_{k}+ \nabla h\left( x_{k-1} \right) \right) \Bigg),
\end{align*}
and the second equation in \eqref{a:inequ200}  is equivalent to
\begin{align*}
&\lambda_{k+1}=2\sigma A x_{k+1}-\sigma b-\sigma A\left(x_{k}+\left(1-\frac{\alpha}{k+\alpha}\right)\left( x_{k}-x_{k-1} \right)\right)-\frac{2k+\alpha}{2(k+\alpha)}\sigma \left( Ax_{k} -b \right)\nonumber\\
&\qquad \quad+\lambda_{k}+\left(1-\frac{\alpha}{k+\alpha}\right)\left( \lambda_{k}-\lambda_{k-1} \right),
\end{align*}
we obtain a further alternative formulation for \cref{algo:fpdVu}  that reads for every $k \geq 1$
\begin{equation}\
\left\{
	\begin{aligned}
		x_{k+1}  := & \ \textnormal{prox}_{\tau f}\Bigg(x_{k}-\tau \left( \nabla h\left( x_{k} \right) + A^* \lambda_{k} \right) +\left(1-\frac{\alpha}{k+\alpha}\right)\left( x_{k}-x_{k-1} \right)\nonumber\\
		&\qquad\qquad -\tau A^*\left(1-\frac{\alpha}{k+\alpha}\right) \left( \lambda_{k}-\lambda_{k-1} \right)+\left( 1 - \frac{\alpha}{2(k+\alpha)} \right)\tau \left( w_{k}+ \nabla h\left( x_{k-1} \right) \right) \Bigg);\\
		\lambda_{k+1} := & \ \lambda_{k}+ \sigma \left( Ax_{k+1}-b \right)+\left(1-\frac{\alpha}{k+\alpha}\right)\left( \lambda_{k}-\lambda_{k-1} \right)\\
& + \sigma A\left(x_{k+1}-x_{k} - \left(1-\frac{\alpha}{k+\alpha}\right)\left( x_{k}-x_{k-1} \right)\right)  -\left( 1 - \frac{\alpha}{2(k+\alpha)} \right)\sigma \left( Ax_{k} -b \right);\nonumber\\
		w_{k+1} := & \frac{1}{\tau}\left(x_{k}-x_{k+1}+\left(1-\frac{\alpha}{k+\alpha}\right)\left( x_{k}-x_{k-1} \right)\right) -A^*\left( \lambda_{k}-\lambda_{k+1}+\left(1-\frac{\alpha}{k+\alpha}\right)\left( \lambda_{k}-\lambda_{k-1} \right)\right) \\
&+\left( 1 - \frac{\alpha}{2(k+\alpha)} \right) \left( w_k +\nabla h\left( x_{k-1} \right) \right) -\nabla h\left( x_{k} \right).
	\end{aligned}\right.
\end{equation}
\end{rmk}

\begin{rmk}\label{rem34}
Sabach and Teboulle proposed in \cite{Sabach2022Teboulle} in finite-dimensional spaces a class of \emph{faster Lagrangian-based methods} (FLAG), which, when applied to the convex optimization problem \eqref{convexpro}, for initial points $x_1$ and $\widebar{x}_1$ and constants $\tau,  r > 0$ and $\theta \in(0,1]$, reads
\begin{equation*}\
(\forall k \geq 1) \quad	\left\{
	\begin{aligned}
		& \widebar{x}_{k+1} := \textnormal{prox}_{\tau f}\left(\widebar{x}_{k}-\tau \left( \nabla h(\widebar{x}_k) + A^{*} \lambda_{k} + r A^{*} (A\widebar{x}_{k}-b) \right) - \tau r k A^{*}(Ax_{k}-b) \right); \\
		& \lambda_{k+1} := \lambda_{k} + \theta r \left(A\widebar{x}_{k+1}-b\right);\\
		&x_{k+1} :=\left(1-\frac{1}{k+1}\right)x_{k} +\frac{1}{k+1}\widebar{x}_{k+1}.
	\end{aligned}\right.
\end{equation*}
The authors derived nonergodic/last iterate convergence rates for the function values and the feasibility violation measure, expressed as
\begin{equation*}
	(f+h)\left( x_{k} \right)-(f+h)(x_*) = \mathcal{O} \left( \frac{1}{k} \right)
	\quad \textrm{ and } \quad
	\left\lVert A\left( x_{k} \right)-b \right\rVert=\mathcal{O} \left( \frac{1}{k} \right) \ \mbox{as} \ k \rightarrow +\infty.
\end{equation*}

Recently, He, Huang, and Fang introduced in \cite{He-Huang-Fang} a primal-dual full splitting algorithm in finite-dimensional spaces, formulated in a manner akin to FLAG. When applied to the convex optimization problem \eqref{convexpro}, their algorithm demonstrates nonergodic/last iterate convergence rates of $\mathcal{O} \left( \frac{1}{k} \right)$ for the restricted primal-dual gap, as well as for the primal and dual discrete velocities. Moreover, they establish that every limit point of the primal-dual iterates sequence is a primal-dual optimal solution.

In contrast, our convergence rate results are of a \emph{little-o} type instead of the \emph{Big-O} type. Specifically, this applies to the primal and dual discrete velocities, the primal-dual gap, the feasibility violation measure, and function values. For the latter, we also present a convergence rate from below. Furthermore, we have successfully proven the convergence of the entire primal-dual sequence of iterates to a primal-dual optimal solution. 

Our algorithm demonstrates the most advanced convergence and convergence rate results among primal-dual full splitting algorithms in the context of minimizing a nonsmooth convex function under linear constraints. Moreover, the iterative scheme and its convergence results can be readily extended to minimize separable nonsmooth convex optimization functions with linearly coupled block variables (see \cite{Bot2021Nguyen}).
\end{rmk}

\section{Numerical experiments}\label{sec4}

In this section, we validate the derived convergence rates for our proposed algorithm concerning optimization problems with linear constraints \eqref{convexpro}. We conduct numerical experiments to compare its performance with the FLAG method \cite{Sabach2022Teboulle}. All experiments are executed using Matlab on a standard Lenovo laptop equipped with an Intel(R) Core(TM) i5-8265U CPU and 8GB RAM.

\begin{figure}[ht!]
	\centering
	\begin{subfigure}[b]{\textwidth}
		\centering
		\includegraphics[width=0.50\linewidth]{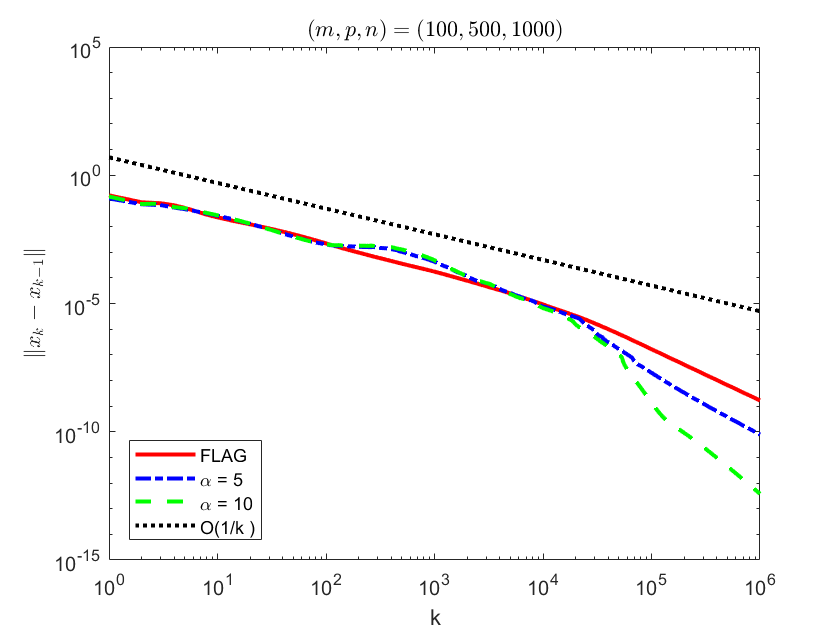}%
		\hfill
		\includegraphics[width=0.50\linewidth]{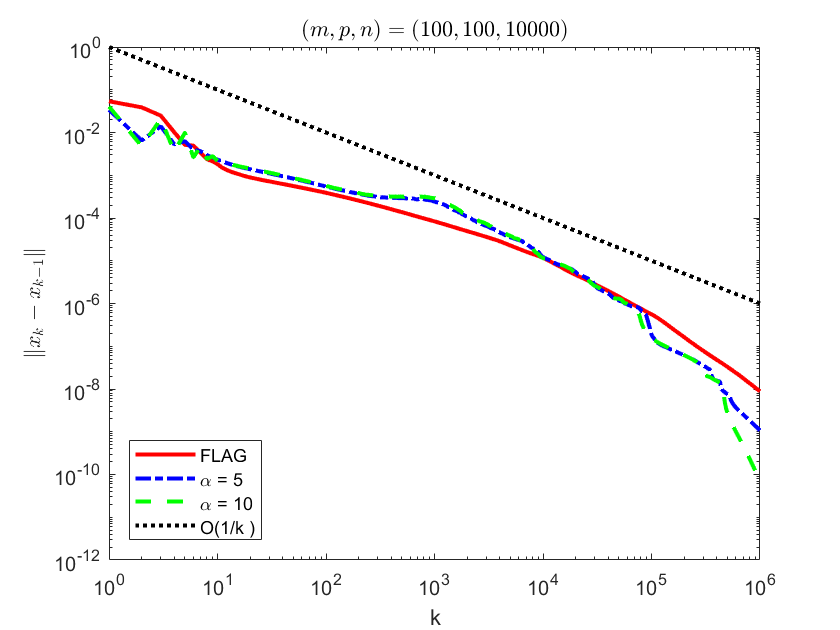}
		\caption{The discrete velocities of the primal sequence}
	\end{subfigure}
	\vskip\baselineskip
	\begin{subfigure}[b]{\textwidth}
		\centering
		\includegraphics[width=0.50\linewidth]{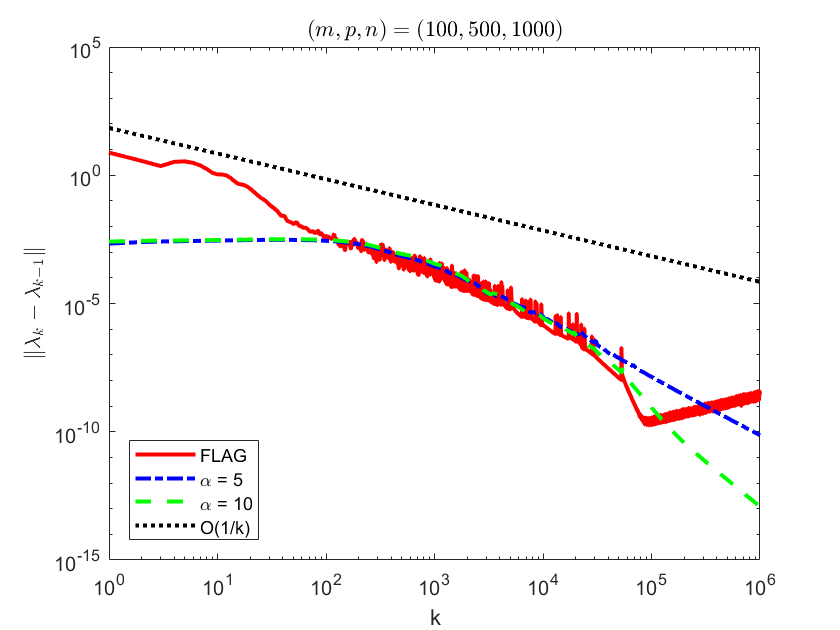}%
		\hfill
		\includegraphics[width=0.50\linewidth]{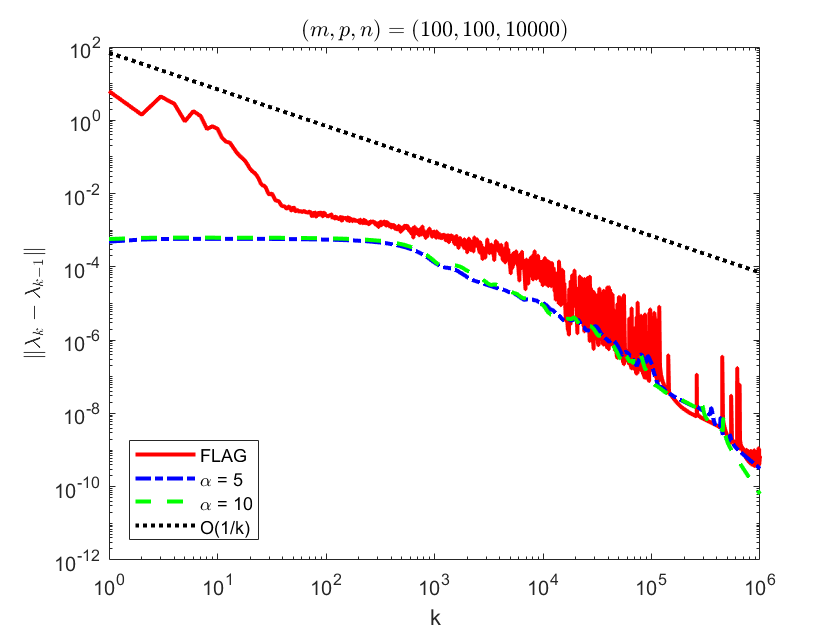}
		\caption{The discrete velocities of the dual sequence}
	\end{subfigure}
	\caption{Comparing the convergence behaviour of the discrete velocities across various dimension settings}\label{figure41}
\end{figure}

\begin{table}[h!]\normalsize
	\centering	
	\begin{tabular}{|c|l|l|l|}
		\hline
		\diagbox{Quantity}{Measure} & \multicolumn{1}{c|}{\textrm{FLAG}} & \multicolumn{1}{c|}{$\alpha = 5$} 	& \multicolumn{1}{c|}{$\alpha = 10$} \\
		\hline
		$\left\lVert x_{k}-x_{k-1} \right\rVert$ 				&	 $1.5124\times 10^{-9}$ 	& $1.2723\times 10^{-10}$ 	& $3.6571\times 10^{-13}$\\
		\hline
		$\left\lVert \lambda_{k}-\lambda_{k-1} \right\rVert$ 	&	 $2.4809\times 10^{-9}$ 	& $ 6.2010\times 10^{-11}$ 	& $1.7187\times 10^{-13}$\\
		\hline	
		$\left\lVert Ax_{k}-b \right\rVert$ 					&	 $3.6996\times 10^{-7}$ 	& $3.4758\times 10^{-4}$ 	& $3.3808\times 10^{-7}$\\
		\hline
		$\left( f+h \right) \left( x_{k} \right)$ 				&	 $25.6340$ 					& $25.6330$ 				& $25.6332$\\		
		\hline
	\end{tabular}
	\caption{Comparing the average convergence behavior of the discrete velocities, feasibility violation measure, and function values for $\left( m, p, n \right) = \left( 100, 500, 1000 \right)$.}\label{inn222}
\end{table}

Particulaly, we considered the following convex optimization problem
\begin{align*}
\min_{x\in \mathbb{R}^{n}} \,\,& \left\lVert x \right\rVert _{1} +\frac{1}{2} \left\lVert Bx-c \right\rVert ^{2} \nonumber\\
\textnormal{s.t.}\,\, &Ax=b,
\end{align*}
where $A\in \mathbb{R}^{m\times n}$, $B\in \mathbb{R}^{p\times n}$, $b\in \mathbb{R}^{m}$ and $c\in \mathbb{R}^{p}$. We carried out the numerical experiments for different choices of the triple of dimensions $(m,p,n)$ for randomly generated matrices and vectors.  For each dimensional setting,  we conducted the experiments $10$ times with $10^6$ iterations per run.  This was achieved by generating
 $A\in \mathbb{R}^{m\times n}$, $B\in \mathbb{R}^{p\times n}$, $b\in \mathbb{R}^{m}$ and $c\in \mathbb{R}^{p}$, all following a standard normal distribution.  In the Tables \ref{inn222} and \ref{inn2222} and the Figures  \ref{figure41} and  \ref{figure42} we showcase the average convergence behavior of the discrete velocities, feasibility violation measure, and function values over $10$ runs for $(m,p,n) = (100,500,1000)$ and $(m,p,n)=(100,100,10000)$, respectively.  These two instances serve as typical examples for the convergence behavior of the two considered numerical methods.  

When solving the optimization problems with FLAG and \cref{algo:fpdVu}, we selected optimal algorithm parameters based on theoretical considerations.  According to \cite{Sabach2022Teboulle}, we chose
\begin{equation*}
	r = 1 \quad \textrm{ and } \quad \tau = \dfrac{\beta}{\beta \left\lVert A \right\rVert ^{2} + 1} .
\end{equation*}
Specifically for \cref{algo:fpdVu}, we explored different choices for $\alpha$ and chose accordingly
\begin{equation*}
	\tau = \sigma := \dfrac{0.99 \beta}{\beta \left\lVert A \right\rVert + \left( 1 - \frac{0.99 \cdot 3 \left( \alpha - 2 \right)}{8 \left( \alpha - 1 \right)} \right)} < \dfrac{\beta}{\beta \left\lVert A \right\rVert + \left( 1 - \frac{3 \left( \alpha - 2 \right)}{8 \left( \alpha - 1 \right)} \right)} = \dfrac{1}{\left\lVert A \right\rVert + \frac{5 \alpha - 2}{8 \left( \alpha - 1 \right) \beta}} .
\end{equation*}
\color{black}

FLAG demonstrates superior convergence concerning the feasibility violation measure, while \cref{algo:fpdVu} outperforms FLAG in terms of the convergence of function values and discrete velocities.

\begin{figure}[ht!]
	\centering
	\begin{subfigure}[b]{\textwidth}
		\centering
		\includegraphics[width=0.50\linewidth]{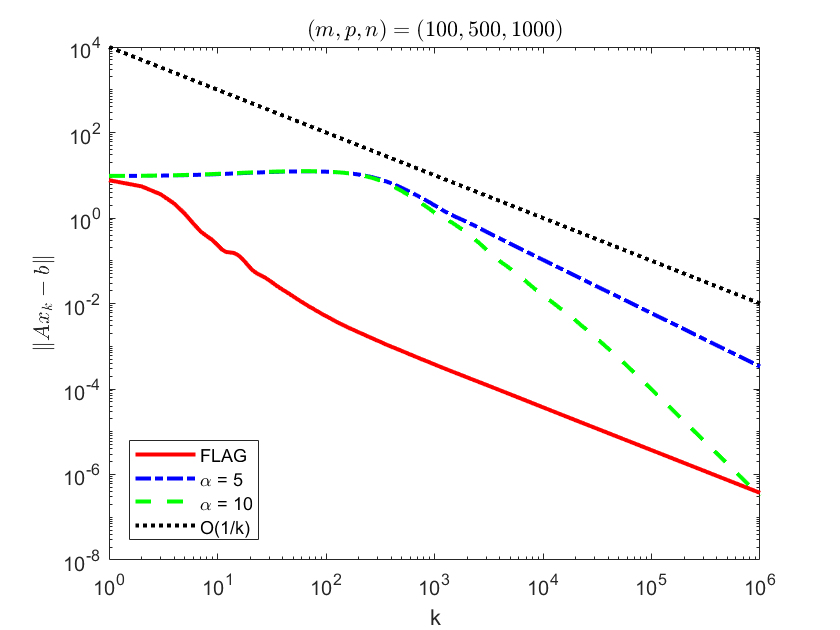}%
		\hfill
		\includegraphics[width=0.50\linewidth]{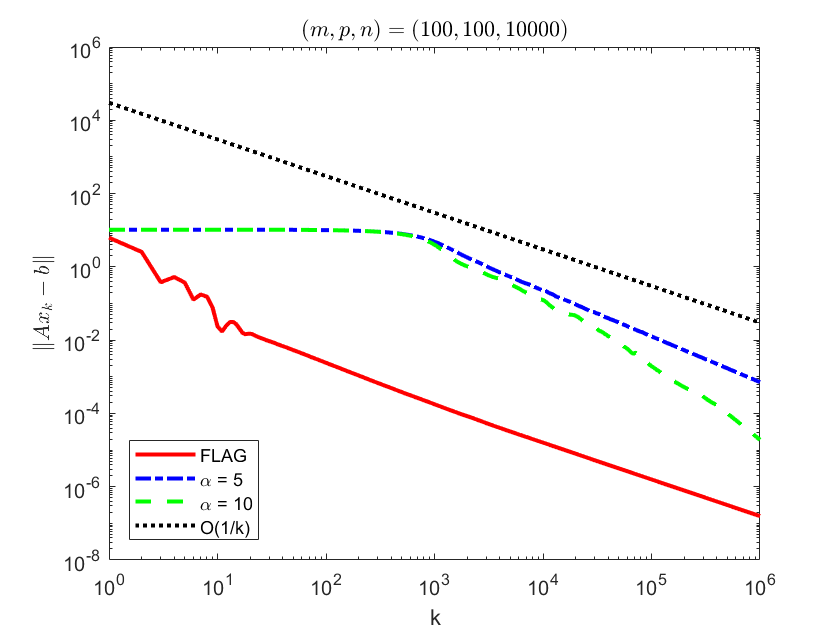}
		\caption{The feasibility violation measures}
	\end{subfigure}
	\vskip\baselineskip
	\begin{subfigure}[b]{\textwidth}
		\centering
		\includegraphics[width=0.50\linewidth]{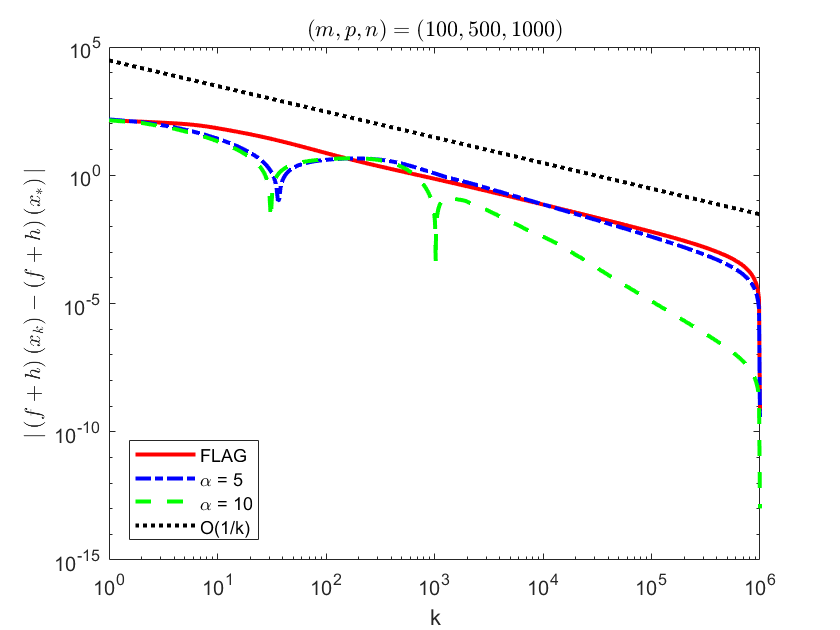}%
		\hfill
		\includegraphics[width=0.50\linewidth]{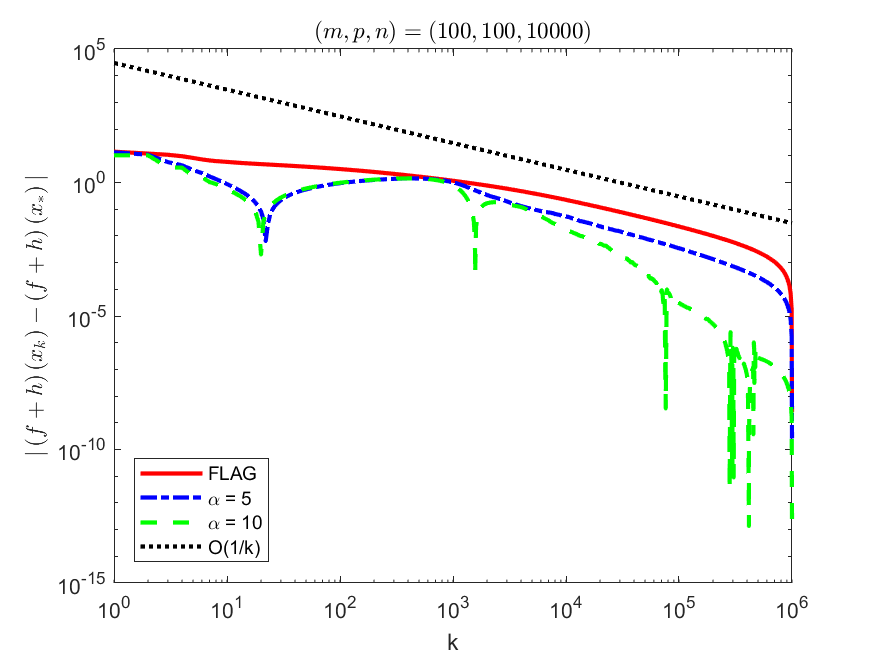}
		\caption{The function values}
	\end{subfigure}
	\caption{Comparing the convergence behavior of the feasibility violation measure and function values across various dimension settings}\label{figure42}
\end{figure}


\begin{table}[h!]\normalsize
	\centering	
	\begin{tabular}{|c|l|l|l|}
		\hline
		\diagbox{Quantity}{Measure} & \multicolumn{1}{c|}{\textrm{FLAG}} & \multicolumn{1}{c|}{$\alpha = 5$} 	& \multicolumn{1}{c|}{$\alpha = 10$} \\
		\hline
		$\left\lVert x_{k}-x_{k-1} \right\rVert$ 				&	 $1.2429\times 10^{-9}$ 	& $3.0553\times 10^{-10}$ 	& $1.5930\times 10^{-13}$\\
		\hline
		$\left\lVert \lambda_{k}-\lambda_{k-1} \right\rVert$ 	&	 $5.6672\times 10^{-10}$ 	& $1.6168\times 10^{-9}$ 	& $4.1257\times 10^{-10}$\\
		\hline	
		$\left\lVert Ax_{k}-b \right\rVert$ 					&	 $1.5354\times 10^{-7}$ 	& $7.0237\times 10^{-4}$ 	& $2.4590\times 10^{-5}$\\
		\hline
		$\left( f+h \right) \left( x_{k} \right)$ 				&	 $5.2996$ 					& $5.2967$ 				& $5.2969$\\		
		\hline
	\end{tabular}
	\caption{Comparing the average convergence behavior of the discrete velocities, feasibility violation measure, and function values for $\left( m, p, n \right) = \left( 100, 100, 10000 \right)$.}\label{inn2222}
\end{table}


\pagebreak
\appendix
\section{Appendix}

Within the appendix, we have assembled supplementary results and furnished proofs for the two technical lemmas essential in analyzing the convergence of the proposed forward-backward algorithm.

\subsection{Preliminaries}\label{subseca1}

\begin{lem}\label{lem111}
Let $a,b,c\in \mathbb{R}$ be such that $a\neq 0$ and $b^2-ac\leq 0$. The following statements are true:
\begin{enumerate}
		\item
		if $a>0$, then
\begin{equation}
a\|x\|^2+2b\langle x,y\rangle+c\|y\|^2\geq 0\qquad \forall x,y\in \mathcal{H};\nonumber
\end{equation}
		
		\item
		if $a<0$, then
\begin{equation}
a\|x\|^2+2b\langle x,y\rangle+c\|y\|^2\geq 0\qquad \forall x,y\in \mathcal{H}.\nonumber
\end{equation}
	\end{enumerate}
\end{lem}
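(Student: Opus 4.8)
The plan is to prove both parts from a single algebraic identity obtained by completing the square, which is legitimate precisely because $a \neq 0$. Concretely, for all $x, y \in \mathcal{H}$ one has
\begin{equation*}
a\|x\|^2 + 2b\langle x, y\rangle + c\|y\|^2 = a\left\|x + \tfrac{b}{a}\, y\right\|^2 + \frac{ac - b^2}{a}\,\|y\|^2 .
\end{equation*}
I would verify this identity by expanding the squared norm via the bilinearity of the inner product: the cross terms reproduce exactly $2b\langle x,y\rangle$, and the coefficients of $\|y\|^2$ recombine to $c$. The point of the identity is that it reduces the sign of the whole quadratic form to the signs of the two scalar coefficients $a$ and $(ac-b^2)/a$, both of which are pinned down by the hypothesis $b^2 - ac \le 0$, i.e.\ $ac - b^2 \ge 0$.

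For part (i), I would assume $a > 0$. Then the first summand $a\bigl\|x + \tfrac{b}{a}y\bigr\|^2$ is nonnegative, while the coefficient $(ac-b^2)/a$ of the second summand is nonnegative as well, being a quotient of the nonnegative numerator $ac - b^2 \ge 0$ and the positive denominator $a > 0$. The right-hand side is therefore a sum of two nonnegative terms, which yields the stated inequality.

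For part (ii), I would assume $a < 0$ and deduce the conclusion from part (i) rather than rerun the computation: apply part (i) to the triple $(-a, -b, -c)$. These coefficients satisfy the hypotheses of part (i), since $-a > 0$ and the discriminant quantity is invariant under the simultaneous sign flip, $(-b)^2 - (-a)(-c) = b^2 - ac \le 0$. Invoking part (i) for $(-a,-b,-c)$ and rearranging then delivers the inequality claimed in part (ii).

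There is no genuine obstacle here; the statement is elementary. The only points that require care are the verification of the completing-the-square identity and the confirmation that the discriminant quantity $b^2 - ac$ is preserved under the sign change $(a,b,c)\mapsto(-a,-b,-c)$, so that the hypothesis of part (i) transfers correctly when deducing part (ii).
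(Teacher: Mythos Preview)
Your argument via completing the square is correct and is the standard elementary proof; the paper itself does not supply a proof of this lemma, so there is nothing to compare against on that front.

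There is, however, one point you should flag explicitly rather than gloss over. Part~(ii) as printed in the paper asserts $a\|x\|^2+2b\langle x,y\rangle+c\|y\|^2\geq 0$, which is false when $a<0$ (take $y=0$, $x\neq 0$). Your reduction to part~(i) with the triple $(-a,-b,-c)$ in fact yields
\[
-a\|x\|^2 - 2b\langle x,y\rangle - c\|y\|^2 \geq 0,
\qquad\text{i.e.}\qquad
a\|x\|^2 + 2b\langle x,y\rangle + c\|y\|^2 \leq 0,
\]
so what you actually prove is the reversed inequality. This is clearly the intended statement: the paper invokes part~(ii) precisely to conclude $R_k\leq 0$ in the proof of Lemma~\ref{lemm2:3}. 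Just make sure your write-up says ``$\leq 0$'' and notes the evident typo, rather than claiming to have established ``the inequality claimed in part~(ii)'' as literally printed.
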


The following result follows from \cite[Lemma 5.31]{bauschkebook2011}.

\begin{lem}\label{ddec}
Let $(a_k)_{k\geq 1}$, $(b_k)_{k\geq 1}$ and $(d_k)_{k\geq 1}$ be sequences of real numbers. If $(a_k)_{k\geq 1}$ is bounded from below, $(b_k)_{k\geq 1}$ and $(d_k)_{k\geq 1}$ are nonnegative sequences satisfying $\sum_{k\geq 1}d_k<+\infty$ and
\begin{equation}\label{inequ:13}
a_{k+1}\leq (1+d_k)a_k -b_k \quad \forall k\geq 1,
\end{equation}
then the following statements are true:
\begin{enumerate}
		\item
		the sequence $(b_k)_{k\geq 1}$ is summable, i.e., $\sum_{k\geq 1}b_k<+\infty$;
		
		\item
		the sequence $(a_k)_{k\geq 1}$ is convergent.
	\end{enumerate}

\end{lem}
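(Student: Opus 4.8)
The plan is to reduce the quasi-monotone recursion \eqref{inequ:13} to a genuinely monotone one by rescaling with the partial products of the perturbation factors. Concretely, I would set $t_{1} := 1$ and $t_{k} := \prod_{j=1}^{k-1}(1+d_{j})$ for $k \geq 2$. Since $d_{j} \geq 0$ and $\log(1+d_{j}) \leq d_{j}$, we have $0 \leq \sum_{j \geq 1}\log(1+d_{j}) \leq \sum_{j \geq 1}d_{j} < +\infty$, so the nondecreasing sequence $(t_{k})_{k \geq 1}$ converges to some $t_{\infty} \in [1,+\infty)$ and satisfies $1 \leq t_{k} \leq t_{\infty}$ for every $k \geq 1$.

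Next I would divide \eqref{inequ:13} by $t_{k+1} = (1+d_{k})\,t_{k} > 0$ to obtain, for every $k \geq 1$,
\begin{equation*}
\frac{a_{k+1}}{t_{k+1}} \leq \frac{a_{k}}{t_{k}} - \frac{b_{k}}{t_{k+1}} \leq \frac{a_{k}}{t_{k}},
\end{equation*}
the last step using $b_{k} \geq 0$. Hence $(a_{k}/t_{k})_{k \geq 1}$ is nonincreasing. Writing $c := \inf_{k \geq 1} a_{k} \in \mathbb{R}$, which is finite by hypothesis, the bound $t_{k} \geq 1$ gives $a_{k}/t_{k} \geq \min\{0,c\}$ for every $k$: if $a_{k} \geq 0$ this is clear, and if $a_{k} < 0$ then $a_{k}/t_{k} \geq a_{k} \geq c$ since $0 < 1/t_{k} \leq 1$. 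A nonincreasing sequence bounded from below converges, so $\ell := \lim_{k \to +\infty} a_{k}/t_{k}$ exists in $\mathbb{R}$; since $t_{k} \to t_{\infty} > 0$, we conclude $a_{k} = (a_{k}/t_{k})\,t_{k} \to \ell\, t_{\infty} \in \mathbb{R}$, which is assertion (ii).

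For assertion (i) I would telescope the first displayed inequality: for every $N \geq 1$,
\begin{equation*}
\sum_{k=1}^{N}\frac{b_{k}}{t_{k+1}} \leq \sum_{k=1}^{N}\left( \frac{a_{k}}{t_{k}} - \frac{a_{k+1}}{t_{k+1}} \right) = \frac{a_{1}}{t_{1}} - \frac{a_{N+1}}{t_{N+1}} \leq a_{1} - \min\{0,c\},
\end{equation*}
so $\sum_{k \geq 1} b_{k}/t_{k+1} < +\infty$; since $0 < t_{k+1} \leq t_{\infty}$, this gives $\sum_{k \geq 1} b_{k} \leq t_{\infty}\sum_{k \geq 1} b_{k}/t_{k+1} < +\infty$, which is (i). (Alternatively, once the nonnegative case has been isolated by this rescaling one may simply invoke \cite[Lemma 5.31]{bauschkebook2011}.) There is no genuine obstacle here, the argument being elementary; the only points requiring a little care are the sign bookkeeping in the lower bound for $a_{k}/t_{k}$ — necessary because $(a_{k})$ is merely bounded from below rather than nonnegative — and verifying that $(t_{k})$ converges, which is precisely where the summability $\sum_{j} d_{j} < +\infty$ enters.
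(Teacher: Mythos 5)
Your argument is correct, and it is genuinely more than what the paper provides: for this lemma the paper gives no proof at all, simply stating that the result "follows from \cite[Lemma 5.31]{bauschkebook2011}". Your route — rescaling by the partial products $t_{k}=\prod_{j=1}^{k-1}(1+d_{j})$, observing that summability of $(d_{k})$ forces $t_{k}\uparrow t_{\infty}<+\infty$, and then exploiting monotonicity of $(a_{k}/t_{k})$ together with a telescoping bound — is exactly the standard mechanism underlying the cited quasi-Fej\'er-type lemma, so in substance you have reproved that lemma rather than invoked it. The one point where your write-up adds value beyond the citation is the hypothesis: the Bauschke--Combettes statement is formulated for nonnegative sequences, whereas here $(a_{k})_{k\geq 1}$ is only bounded from below, and your sign bookkeeping (the bound $a_{k}/t_{k}\geq\min\{0,c\}$ with $c=\inf_{k}a_{k}$) handles this directly; alternatively one can reduce to the nonnegative case by replacing $a_{k}$ with $a_{k}+m$ for $m\geq 0$ with $a_{k}\geq -m$, since $a_{k+1}+m\leq(1+d_{k})(a_{k}+m)-b_{k}-d_{k}m\leq(1+d_{k})(a_{k}+m)-b_{k}$, which is the implicit step the paper's citation glosses over. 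Either way, both the summability of $(b_{k})$ and the convergence of $(a_{k})$ follow as you argue, and no gap remains.
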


The following result is stated and proven in \cite[Lemma A.5]{Bot2023Robert_Nguyen}.

\begin{lem}
	\label{lem:lim-u-k}
	Let $a \geq 1$ and $\left( q_{k} \right)_{k \geq 0}$ be a bounded sequence in $\sH$ such that
	\begin{equation*}
		\lim\limits_{k \to + \infty} \left( q_{k+1} + \dfrac{k}{a} \left( q_{k+1} - q_{k} \right) \right) = l \in \sH .
	\end{equation*}
	Then it holds $\lim\limits_{k \to + \infty} q_{k} = l$.
\end{lem}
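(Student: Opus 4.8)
\textbf{Proof proposal for \cref{lem:lim-u-k}.}

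The plan is to convert the hypothesis into an \emph{explicit} formula for $q_k$ by a weighted telescoping, and then extract the limit by a Cesàro-type averaging argument. First I would introduce $p_k := q_k - l$ and $\epsilon_k := q_{k+1} + \tfrac{k}{a}(q_{k+1}-q_k) - l$, so that $\epsilon_k \to 0$ by assumption. Multiplying the defining relation by $a$ gives $(a+k)q_{k+1} - kq_k = al + a\epsilon_k$, and substituting $q_j = p_j + l$ the constant terms cancel, leaving the clean recursion
\begin{equation*}
(a+k)\,p_{k+1} - k\,p_k = a\,\epsilon_k \qquad \forall k \ge 1 .
\end{equation*}

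Next I would introduce telescoping weights $\theta_k$ by $\theta_1 := 1$ and $\theta_{k+1} := \tfrac{a+k}{k+1}\theta_k$, equivalently $\theta_k = \prod_{j=1}^{k-1}\tfrac{a+j}{j+1}$. Since $a \ge 1$, each factor is $\ge 1$, so $(\theta_k)_{k\ge 1}$ is positive, nondecreasing, and $\theta_k \ge 1$. Multiplying the recursion by $\theta_k$ and using the key identity $\theta_k(a+k) = \theta_{k+1}(k+1)$, the left-hand side becomes the telescoping difference $\theta_{k+1}(k+1)p_{k+1} - \theta_k k\, p_k$; summing over $j = 1,\dots,k$ yields
\begin{equation*}
\theta_{k+1}(k+1)\,p_{k+1} = \theta_1 p_1 + a\sum_{j=1}^{k}\theta_j\,\epsilon_j ,
\qquad \text{hence} \qquad
p_{k+1} = \frac{\theta_1 p_1 + a\sum_{j=1}^{k}\theta_j\epsilon_j}{\theta_{k+1}(k+1)} .
\end{equation*}

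It then remains to show the right-hand side tends to $0$. A one-line induction establishes $\sum_{j=1}^{k}\theta_j \le \tfrac{1}{a}\,\theta_{k+1}(k+1)$: the base case is $\theta_1 \le \tfrac1a\theta_2\cdot 2 = \tfrac{a+1}{a}\theta_1$, and the inductive step uses $\theta_k(a+k) = \theta_{k+1}(k+1)$ to turn $\tfrac1a\theta_{k+1}(k+1) + \theta_{k+1}$ into exactly $\tfrac1a\theta_{k+2}(k+2)$. Consequently, given $\delta > 0$, pick $N$ with $\|\epsilon_j\| < \delta$ for $j > N$; then $\big\|a\sum_{j=1}^{k}\theta_j\epsilon_j\big\| \le a\sum_{j=1}^{N}\theta_j\|\epsilon_j\| + \delta\,\theta_{k+1}(k+1)$, and since $\theta_{k+1}(k+1) \ge k+1 \to +\infty$ the fixed term $\theta_1 p_1$ and the partial sum up to $N$ become negligible, so $\limsup_{k}\|p_{k+1}\| \le \delta$; letting $\delta \downarrow 0$ gives $q_k \to l$. (Alternatively, one may invoke the Silverman–Toeplitz/weighted-Cesàro theorem directly, which applies verbatim to Hilbert-space-valued sequences.)

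The only real obstacle here is bookkeeping: choosing the weights $\theta_k$ so that the recursion telescopes, and verifying the bound $\sum_{j\le k}\theta_j \le \tfrac1a\theta_{k+1}(k+1)$ that keeps the averaging weights summable. No analytic difficulty beyond this arises; in particular, the boundedness hypothesis on $(q_k)_{k\ge 0}$ is not actually needed, since the explicit formula already forces $p_k \to 0$.
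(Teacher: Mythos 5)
Your argument is correct and complete. The paper itself does not prove \cref{lem:lim-u-k}; it only cites \cite[Lemma A.5]{Bot2023Robert_Nguyen}, so your weighted-telescoping proof is a valid self-contained substitute rather than a variant of an in-paper argument. The key steps all check out: the recursion $(a+k)p_{k+1}-kp_k=a\epsilon_k$ is exact, the weights $\theta_{k+1}:=\frac{a+k}{k+1}\theta_k$ satisfy $\theta_k(a+k)=\theta_{k+1}(k+1)$ so the left-hand side telescopes, the induction giving $\sum_{j=1}^{k}\theta_j\leq\frac{1}{a}\theta_{k+1}(k+1)$ closes with equality in the inductive step (using the identity at index $k+1$), and since $\theta_{k+1}\geq 1$ for $a\geq 1$ the denominator $\theta_{k+1}(k+1)\to+\infty$, so the split into the finitely many terms $j\leq N$ and the tail controlled by $\delta$ yields $\limsup_k\lVert p_{k+1}\rVert\leq\delta$. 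In effect you have verified the Toeplitz/weighted-Ces\`aro condition by hand, which works verbatim for Hilbert-space-valued sequences. Your closing observation is also accurate: the explicit formula forces $p_k\to 0$ without invoking the boundedness of $(q_k)_{k\geq 0}$, so that hypothesis in the statement is redundant for the conclusion (it is harmless, and in the paper's application boundedness is available anyway).
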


For the convergence proof of the iterates we use the discrete version of Opial’s Lemma.

\begin{lem}
	\label{lem:Opial:dis}
	Let $\mathcal{S}$ be a nonempty subset of $\sH$ and $\left( z_{k} \right) _{k \geq 1}$ be a sequence in $\sH$.
	Assume that
	\begin{enumerate}
		\item
		\label{lem:Opial:dis:i}
		for every $z_{*} \in \mathcal{S}$, $\lim\limits_{k \to + \infty} \left\lVert z_{k} - z_{*} \right\rVert$ exists;
		
		\item
		\label{lem:Opial:dis:ii}
		every weak sequential cluster point of the sequence $\left(z_{k} \right) _{k \geq 1}$ as $k \to + \infty$ belongs to $\mathcal{S}$.
	\end{enumerate}
	Then $\left(z_{k} \right)_{k \geq 1}$ converges weakly to a point in $\mathcal{S}$ as $k \to + \infty$.
\end{lem}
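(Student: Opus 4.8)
The plan is to follow the classical argument behind Opial's lemma. \textbf{Step 1 (boundedness and existence of a cluster point).} Since $\mathcal{S}\neq\emptyset$, fix some $z_*\in\mathcal{S}$; by hypothesis \ref{lem:Opial:dis:i} the real sequence $\left\lVert z_k-z_*\right\rVert$ converges, hence is bounded, and therefore $\left\lVert z_k\right\rVert\leq\left\lVert z_k-z_*\right\rVert+\left\lVert z_*\right\rVert$ shows that $(z_k)_{k\geq 1}$ is bounded. As $\sH$ is a Hilbert space, bounded sequences admit weakly convergent subsequences, so $(z_k)_{k\geq 1}$ has at least one weak sequential cluster point, which by \ref{lem:Opial:dis:ii} lies in $\mathcal{S}$. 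In particular the set of weak sequential cluster points is nonempty.

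\textbf{Step 2 (uniqueness of the cluster point).} Suppose $\widebar{z}_1,\widebar{z}_2$ are weak sequential cluster points, say $z_{k_n}\rightharpoonup\widebar{z}_1$ and $z_{l_m}\rightharpoonup\widebar{z}_2$; by \ref{lem:Opial:dis:ii} both belong to $\mathcal{S}$. The key identity is
\begin{equation*}
\left\lVert z_k-\widebar{z}_1\right\rVert^2-\left\lVert z_k-\widebar{z}_2\right\rVert^2 = 2\left\langle z_k,\widebar{z}_2-\widebar{z}_1\right\rangle+\left\lVert\widebar{z}_1\right\rVert^2-\left\lVert\widebar{z}_2\right\rVert^2 .
\end{equation*}
Applying \ref{lem:Opial:dis:i} with $z_*=\widebar{z}_1$ and with $z_*=\widebar{z}_2$, the left-hand side converges as $k\to+\infty$, hence so does $\left\langle z_k,\widebar{z}_2-\widebar{z}_1\right\rangle$; denote its limit by $\ell$. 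Passing to the limit along $(z_{k_n})$ and using weak convergence gives $\ell=\left\langle\widebar{z}_1,\widebar{z}_2-\widebar{z}_1\right\rangle$, while passing along $(z_{l_m})$ gives $\ell=\left\langle\widebar{z}_2,\widebar{z}_2-\widebar{z}_1\right\rangle$. Subtracting yields $\left\lVert\widebar{z}_2-\widebar{z}_1\right\rVert^2=0$, so $\widebar{z}_1=\widebar{z}_2$.

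\textbf{Step 3 (global weak convergence).} It remains to argue that a bounded sequence with a unique weak sequential cluster point $\widebar{z}$ converges weakly to $\widebar{z}$. If not, there would exist a weak neighbourhood $V$ of $\widebar{z}$ and a subsequence of $(z_k)_{k\geq 1}$ lying outside $V$; being bounded, this subsequence would admit a further weakly convergent subsequence, whose limit is a weak sequential cluster point of $(z_k)_{k\geq 1}$ distinct from $\widebar{z}$ (it remains outside $V$), contradicting Step 2. Hence $z_k\rightharpoonup\widebar{z}$, and $\widebar{z}\in\mathcal{S}$ by Step 1--2.

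The argument is entirely standard, and I do not expect a genuine obstacle; the only point requiring mild care is the passage from uniqueness of the weak cluster point to global weak convergence in Step 3, which rests on the boundedness obtained in Step 1 together with the sequential weak compactness of bounded sets in a Hilbert space.
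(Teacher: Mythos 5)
Your proof is correct: Step 1 gives boundedness and hence a weak cluster point, Step 2 is the classical uniqueness argument via the identity $\left\lVert z_k-\widebar{z}_1\right\rVert^2-\left\lVert z_k-\widebar{z}_2\right\rVert^2 = 2\left\langle z_k,\widebar{z}_2-\widebar{z}_1\right\rangle+\left\lVert\widebar{z}_1\right\rVert^2-\left\lVert\widebar{z}_2\right\rVert^2$, and Step 3 correctly passes from uniqueness of the weak sequential cluster point to weak convergence (the complement of the weak neighbourhood being weakly sequentially closed, the limit of the further subsequence is indeed distinct from $\widebar{z}$). The paper states this discrete Opial lemma as a known auxiliary result without proof, and your argument is exactly the standard one, so there is nothing to reconcile.
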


The following lemma illustrates the relationship between the two convergence measures utilized in this paper (see [Fact 1, \cite{Yang2022Zheng}]).

\begin{lem}\label{bb1}
For every $z\in \sH$ and every $\gamma >0$, it holds
\begin{equation*}
 r_{\mathrm{fix}, \gamma}(z) = \|z-J_{\gamma M}\big(z-\gamma C(z)\big)\| \leq \gamma \inf_{\xi\in M(z)}\|\xi+C(z)\| = \gamma \dist(0, M(z) + C(z)) = \gamma r_{\mathrm{tan}}(z). 
\end{equation*}
\end{lem}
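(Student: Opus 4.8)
The plan is to derive the inequality directly from the nonexpansiveness of the resolvent, using the elementary observation that $z$ is the resolvent value at the point $z$ shifted along any of its subgradient directions. Fix $z \in \sH$ and $\gamma > 0$. If $M(z) = \emptyset$, then $\dist(0, M(z)+C(z)) = +\infty$ and there is nothing to prove, so assume $M(z) \neq \emptyset$ and let $\xi \in M(z)$ be arbitrary. Since $M$ is maximally monotone, $J_{\gamma M} = (\Id + \gamma M)^{-1}$ is single-valued and everywhere defined; and because $z + \gamma \xi \in (\Id + \gamma M)(z)$, we obtain $J_{\gamma M}(z + \gamma \xi) = z$.

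Next I would invoke that $J_{\gamma M}$ is firmly nonexpansive, hence in particular $1$-Lipschitz, to estimate
\[
r_{\mathrm{fix}, \gamma}(z) = \bigl\| z - J_{\gamma M}\bigl(z - \gamma C(z)\bigr) \bigr\| = \bigl\| J_{\gamma M}(z + \gamma \xi) - J_{\gamma M}\bigl(z - \gamma C(z)\bigr) \bigr\| \leq \bigl\| (z + \gamma \xi) - (z - \gamma C(z)) \bigr\| = \gamma \|\xi + C(z)\|.
\]
Since $\xi \in M(z)$ was arbitrary, taking the infimum over $\xi \in M(z)$ yields $r_{\mathrm{fix}, \gamma}(z) \leq \gamma \inf_{\xi \in M(z)} \|\xi + C(z)\|$. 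Finally, the substitution $w := \xi + C(z)$ identifies $\inf_{\xi \in M(z)} \|\xi + C(z)\| = \inf_{w \in M(z) + C(z)} \|w\| = \dist(0, M(z) + C(z)) = r_{\mathrm{tan}}(z)$, which closes the chain of (in)equalities.

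There is essentially no genuine obstacle here; the proof is short and relies only on standard facts about resolvents of maximally monotone operators (single-valuedness, full domain, firm nonexpansiveness), for which one may cite \cite{bauschkebook2011}. The only point deserving a moment's care is the reduction to the case $M(z) \neq \emptyset$ so that the bound is meaningful, together with the sign bookkeeping in the shifted argument $z + \gamma\xi$ versus $z - \gamma C(z)$.
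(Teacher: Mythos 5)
Your proof is correct and follows essentially the same route as the paper's: apply $J_{\gamma M}(z+\gamma\xi)=z$ for an arbitrary $\xi\in M(z)$ and the nonexpansiveness of the resolvent, then take the infimum over $\xi$. The extra remark about the vacuous case $M(z)=\emptyset$ is a harmless addition.
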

\begin{proof}
Let  $z\in \sH$. For arbitrary $\xi\in M(z)$, it follows from the nonexpansivity of $J_{\gamma M}$ that
\begin{align*}
	r_{\mathrm{fix}}(z)&=\|z-J_{\gamma M}\big(z-\gamma C(z)\big)\| =\|J_{\gamma M}(z+ \gamma \xi)-J_{\gamma M}\big(z-\gamma C(z)\big)\| \leq \gamma \|\xi+C(z)\|.
\end{align*}
This proves the statement.
\end{proof}

\subsection{Proofs of the technical lemmas}\label{subseca2}

In this subsection we provide the proofs of the Lemmas \ref{lemma8}, \ref{lemma18}, and \ref{lemm2:3}. By invoking the second update block in Algorithm \ref{algo:ffb}, we have that for every $k \geq 1$
\begin{align}
 & \ 2 \left( k + \alpha \right) \left( z_{k+1} - z_{k} \right) \nonumber \\
= &  \ 2k \left( z_{k} - z_{k-1} \right) -  (2 k+\alpha) \gamma  \Big( \xi_{k+1}+C \left(z_k\right) - \left( \xi_{k}+C \left( z_{k-1} \right) \right)\Big) - \gamma \alpha  \left(\xi_{k+1}+C \left(z_k\right)\right), \label{dis:d-u}
\end{align}

\begin{proofff}
Let $0 \leq \eta \leq \alpha-1$.  For brevity  we denote for every $k \geq 0$
\begin{equation}
\label{defi:u-k-lambda}
u_{\eta,k+1} := 2 \eta \left( z_{k+1} - z_{*} \right) + 2 \left( k + 1 \right) \left( z_{k+1} - z_{k} \right) +  \frac{5\alpha-2}{4(\alpha-1)}\gamma \left( k + 1 \right) (\xi_{k+1}+C \left( z_{k} \right)) .
\end{equation}
This means that  for every $k \geq 1$ it holds
\begin{equation*}
u_{\eta,k} = 2 \eta \left( z_{k} - z_{*} \right) + 2k \left( z_{k} - z_{k-1} \right) + \frac{5\alpha-2}{4(\alpha-1)}\gamma k  (\xi_{k}+C \left( z_{k-1} \right)).
\end{equation*}
Therefore, taking the difference and using \eqref{dis:d-u}, we deduce that for every $k \geq 1$
\begin{align}
u_{\eta,k+1} - u_{\eta,k}
= & \ 2 \left( \eta + 1 - \alpha \right) \left( z_{k+1} - z_{k} \right) + 2 \left( k + \alpha \right) \left( z_{k+1} - z_{k} \right) - 2k \left( z_{k} - z_{k-1} \right) \nonumber \\
& + \frac{5\alpha-2}{4(\alpha-1)}\gamma \left( k + 1 \right) (\xi_{k+1}+C \left( z_{k} \right)) - \frac{5\alpha-2}{4(\alpha-1)}\gamma k  (\xi_{k}+C \left( z_{k-1} \right)) \nonumber \\
= & \ 2 \left( \eta + 1 - \alpha \right) \left( z_{k+1} - z_{k} \right) - \frac{(4\alpha -1)(\alpha-2)}{4(\alpha -1)} \gamma \left(\xi_{k+1}+C \left( z_{k} \right)\right)\nonumber\\
&- \left(\frac{3(\alpha-2)}{4(\alpha-1)}k+\alpha\right)\gamma \Big(\xi_{k+1}+C \left( z_{k} \right)-(\xi_{k}+C \left( z_{k-1} \right)) \Big). \label{defi:u-k-lambda:dif}
\end{align}
Let $k \geq 1$. In the following we want to use the identity
\begin{equation}
\label{dec:dif:u-lambda:pre}
\dfrac{1}{2} \left( \left\lVert u_{\eta,k+1} \right\rVert ^{2} - \left\lVert u_{\eta,k} \right\rVert ^{2} \right) = \left\langle u_{\eta,k+1} , u_{\eta,k+1} - u_{\eta,k} \right\rangle - \dfrac{1}{2} \left\lVert u_{\eta,k+1} - u_{\eta,k} \right\rVert ^{2} .
\end{equation}
Using \eqref{defi:u-k-lambda} and \eqref{defi:u-k-lambda:dif}, we obtain that
\begin{align}
& \left\langle u_{\eta,k+1} , u_{\eta,k+1} - u_{\eta,k} \right\rangle \nonumber \\
= & \ 4 \eta \left( \eta + 1 - \alpha \right) \left\langle z_{k+1} - z_{*} , z_{k+1} - z_{k} \right\rangle
-\frac{(4\alpha-1)(\alpha-2)}{2(\alpha-1)} \eta \gamma \left\langle z_{k+1} - z_{*} , \xi_{k+1}+C \left( z_{k} \right) \right\rangle \nonumber \\
& - 2\left(\frac{3(\alpha -2)}{4(\alpha -1)}k+\alpha\right)\eta \gamma  \left\langle z_{k+1} - z_{*} , \xi_{k+1}+C \left( z_{k} \right) - (\xi_{k}+C \left( z_{k-1} \right)) \right\rangle
\nonumber \\
& + \left(\frac{5\alpha-2}{2(\alpha-1)}(\eta+1-\alpha)-\frac{(4\alpha-1)(\alpha-2)}{2(\alpha-1)}\right) \gamma \left( k + 1 \right)  \left\langle z_{k+1} - z_{k} , \xi_{k+1}+C \left( z_{k} \right) \right\rangle \nonumber \\
&  - 2\left(\frac{3(\alpha -2)}{4(\alpha -1)}k+\alpha\right) \gamma \left( k + 1 \right) \left\langle z_{k+1} - z_{k} , \xi_{k+1}+C \left( z_{k} \right) - (\xi_{k}+C \left( z_{k-1} \right)) \right\rangle\nonumber\\
&  -  \frac{(4\alpha -1)(\alpha-2)(5\alpha-2)}{16(\alpha-1)^2}\gamma^{2} \left( k + 1 \right)  \left\lVert \xi_{k+1}+C \left( z_{k} \right) \right\rVert ^{2}+ 4 \left( \eta + 1 - \alpha \right) \left( k + 1 \right) \left\lVert z_{k+1} - z_{k} \right\rVert ^{2} \nonumber \\
&  - \frac{5\alpha-2}{4(\alpha-1)}\left(\frac{3(\alpha-2)}{4(\alpha-1)}k+\alpha\right) \gamma^{2} \left( k + 1 \right)  \left\langle \xi_{k+1}+C \left( z_{k} \right) , \xi_{k+1}+C \left( z_{k} \right)- (\xi_{k}+C \left( z_{k-1} \right))\right\rangle , \label{dec:dif:u-lambda:inn}
\end{align}
and
\begin{align}
&- \dfrac{1}{2} \left\lVert u_{\eta,k+1} - u_{\eta,k} \right\rVert ^{2}\nonumber\\
= &-2\left( \eta + 1 - \alpha \right)^2\|z_{k+1}-z_{k}\|^2- \frac{(4\alpha-1)^2(\alpha-2)^2}{32(\alpha-1)^2} \gamma^{2} \left\lVert \xi_{k+1}+C \left( z_{k} \right) \right\rVert ^{2} \nonumber \\
& \quad - \frac{1}{2}\left(\frac{3(\alpha-2)}{4(\alpha-1)}k+\alpha\right)^2 \gamma^{2}  \left\lVert \xi_{k+1}+C \left( z_{k} \right) - (\xi_{k}+C \left( z_{k-1} \right)) \right\rVert ^{2} \nonumber \\
& \quad +\frac{(4\alpha-1)(\alpha-2)}{2(\alpha-1)}\left( \eta + 1 - \alpha \right)  \gamma  \left\langle z_{k+1} - z_{k} , \xi_{k+1}+C \left( z_{k} \right) \right\rangle \nonumber \\
& \quad	+2\left( \eta + 1 - \alpha \right) \left(\frac{3(\alpha-2)}{4(\alpha-1)} k+\alpha\right) \gamma \left\langle z_{k+1} - z_{k} , \xi_{k+1}+C \left( z_{k} \right) - (\xi_{k}+C \left( z_{k-1} \right)) \right\rangle \nonumber \\
& \quad -\frac{(4\alpha-1)(\alpha-2)}{4(\alpha-1)}\left(\frac{3(\alpha-2)}{4(\alpha-1)}k+\alpha\right) \gamma^{2}    \left\langle \xi_{k+1}+C \left( z_{k} \right) , \xi_{k+1}+C \left( z_{k} \right)- (\xi_{k}+C \left( z_{k-1} \right)) \right\rangle . \label{dec:dif:u-lambda:norm}
\end{align}

By plugging \eqref{dec:dif:u-lambda:inn} and \eqref{dec:dif:u-lambda:norm} into \eqref{dec:dif:u-lambda:pre}, we get 
\begin{align}
& \dfrac{1}{2} \left( \left\lVert u_{\eta,k+1} \right\rVert ^{2} - \left\lVert u_{\eta,k} \right\rVert ^{2} \right) \nonumber \\
= & \ 4 \eta \left( \eta + 1 - \alpha \right) \left\langle z_{k+1} - z_{*} , z_{k+1} - z_{k} \right\rangle
- \frac{(4\alpha-1)(\alpha-2)}{2(\alpha-1)} \eta \gamma \left\langle z_{k+1} - z_{*} , \xi_{k+1}+C \left( z_{k} \right) \right\rangle \nonumber \\
& -2 \left(\frac{3(\alpha-2)}{4(\alpha-1)}k+\alpha\right) \eta \gamma \left\langle z_{k+1} - z_{*} , \xi_{k+1}+C \left( z_{k} \right) - (\xi_{k}+C \left( z_{k-1} \right)) \right\rangle\nonumber\\
&+ 2 \left( \eta + 1 - \alpha \right) \left( 2k + \alpha + 1 - \eta \right) \left\lVert z_{k+1} - z_{k} \right\rVert ^{2} \nonumber \\
& + 2  \left(\frac{5\alpha-2}{4(\alpha-1)} \left( \eta + 1 - \alpha \right) -\frac{(4\alpha-1)(\alpha-2)}{4(\alpha-1)}\right)k\gamma 
\left\langle z_{k+1} - z_{k} , \xi_{k+1}+C \left( z_{k} \right) \right\rangle \nonumber \\
&-2 \left( \frac{(4\alpha-1)(\alpha-2)}{4(\alpha-1)}
-\alpha(\eta+1-\alpha)\right)
\gamma \left\langle z_{k+1} - z_{k} , \xi_{k+1}+C \left( z_{k} \right) \right\rangle \nonumber \\
& - 2 \left(\frac{3(\alpha-2)}{4(\alpha-1)}k+\alpha\right) \left( k + \alpha - \eta \right)\gamma \left\langle z_{k+1} - z_{k} , \xi_{k+1}+C \left( z_{k} \right) - (\xi_{k}+C \left( z_{k-1} \right)) \right\rangle\nonumber\\
& -\frac{(4\alpha-1)(\alpha-2)}{8(\alpha-1)}\left(\frac{5\alpha-2}{2(\alpha-1)}k + \frac{4\alpha^2+\alpha-2}{4(\alpha-1)} \right) \gamma^{2} \left\lVert \xi_{k+1}+C \left( z_{k} \right) \right\rVert ^{2} \nonumber \\
& - \left(\frac{3(\alpha-2)}{4(\alpha-1)}k+\alpha\right) \left(\frac{5\alpha-2}{4(\alpha-1)} k + \alpha \right)  \gamma^{2}  \left\langle \xi_{k+1}+C \left( z_{k} \right) , \xi_{k+1}+C \left( z_{k} \right) - (\xi_{k}+C \left( z_{k-1} \right)) \right\rangle\nonumber\\
&- \frac{1}{2}\left(\frac{3(\alpha-2)}{4(\alpha-1)}k+\alpha\right)^2 \gamma^{2} \left\lVert \xi_{k+1}+C \left( z_{k} \right) - (\xi_{k}+C \left( z_{k-1} \right)) \right\rVert ^{2} . \label{dec:dif:u-lambda}
\end{align}
Above we obtained an estimate for the difference of the first terms in the definition of $\E_{\eta,k+1}$ and $\E_{\eta,k}$, respectively. Next we evaluate the differences of the remaining terms in the two discrete energy functions, respectively.  First, we observe that 
\begin{align}
& \ 2 \eta \left( \alpha - 1 - \eta \right) \left( \left\lVert z_{k+1} - z_{*} \right\rVert ^{2} - \left\lVert z_{k} - z_{*} \right\rVert ^{2} \right) \nonumber \\
= & \  2 \eta \left( \alpha - 1 - \eta \right) \left( 2 \left\langle z_{k+1} - z_{*} , z_{k+1} - z_{k} \right\rangle - \left\lVert z_{k+1} - z_{k} \right\rVert ^{2} \right) . \label{dec:dif:norm}
\end{align}
Further, we have
\begin{align}
& \ 2 \eta \gamma \left(\frac{3(\alpha-2)}{4(\alpha-1)}\left( k + 1 \right)+\alpha\right)   \left\langle z_{k+1} - z_{*} , \xi_{k+1}+C \left( z_{k} \right) \right\rangle\nonumber\\
&- 2 \eta \gamma \left(\frac{3(\alpha-2)}{4(\alpha-1)}k+\alpha\right)  \left\langle z_{k} - z_{*} , \xi_{k}+C \left( z_{k-1} \right) \right\rangle \nonumber \\
=  & \ \frac{3(\alpha-2)}{2(\alpha-1)} \eta \gamma \left\langle z_{k+1} - z_{*} , \xi_{k+1}+C \left( z_{k} \right) \right\rangle + 2 \eta \gamma \left(\frac{3(\alpha-2)}{4(\alpha-1)}k+\alpha\right) \big( \left\langle z_{k+1} - z_{*} , \xi_{k+1}+C \left( z_{k} \right) \right\rangle\nonumber\\
& - \left\langle z_{k} - z_{*} , \xi_{k}+C \left( z_{k-1} \right)\right\rangle \big) \nonumber \\
= & \ \frac{3(\alpha-2)}{2(\alpha-1)} \eta \gamma \left\langle z_{k+1} - z_{*} , \xi_{k+1}+C \left( z_{k} \right) \right\rangle+2 \eta \gamma \left(\frac{3(\alpha-2)}{4(\alpha-1)}k+\alpha\right)  \left\langle z_{k+1} - z_{k} ,  \xi_{k+1}+C \left( z_{k} \right) \right\rangle \nonumber \\
&  + 2 \eta \gamma \left(\frac{3(\alpha-2)}{4(\alpha-1)}k+\alpha\right)  \left\langle z_{k+1} - z_{*} ,  \xi_{k+1}+C \left( z_{k} \right) - (\xi_{k}+C \left( z_{k-1} \right)) \right\rangle\nonumber\\
&- 2 \eta \gamma \left(\frac{3(\alpha-2)}{4(\alpha-1)}k+\alpha\right)   \left\langle z_{k+1} - z_{k} ,  \xi_{k+1}+C \left( z_{k} \right) - (\xi_{k}+C \left( z_{k-1} \right)) \right\rangle, \label{dec:dif:vi}
\end{align}
and
\begin{align}
& \ \dfrac{1}{2} \gamma^{2}\left(\frac{3(\alpha-2)}{4(\alpha-1)}(k+1)+\alpha\right)  \left( \frac{5\alpha-2}{4(\alpha-1)}(k+1) + \alpha \right) \left\lVert \xi_{k+1}+C \left( z_{k} \right) \right\rVert ^{2} \nonumber\\
&- \dfrac{1}{2} \gamma^{2}\left(\frac{3(\alpha-2)}{4(\alpha-1)}k+\alpha\right) \left( \frac{5\alpha-2}{4(\alpha-1)}k + \alpha \right)   \left\lVert \xi_{k}+C \left( z_{k-1} \right) \right\rVert ^{2} \nonumber \\
= & \  \dfrac{1}{2} \gamma^{2} \left( \frac{3(5\alpha-2)(\alpha-2)}{8(\alpha-1)^2}k + \frac{(5\alpha-2)\alpha}{4(\alpha-1)}+\frac{3(\alpha-2)(4\alpha^2+\alpha-2)}{16(\alpha-1)^2} \right) \left\lVert\xi_{k+1}+C \left( z_{k} \right) \right\rVert ^{2}\nonumber\\
&-\dfrac{1}{2} \gamma^{2} \left(\frac{3(\alpha-2)}{4(\alpha-1)}k+\alpha\right)\left( \frac{5\alpha-2}{4(\alpha-1)}k + \alpha \right) \left\lVert \xi_{k+1}+C \left( z_{k} \right)-(\xi_{k}+C \left( z_{k-1} \right)) \right\rVert ^{2} \nonumber \\
& + \gamma^{2}\left(\frac{3(\alpha-2)}{4(\alpha-1)}k+\alpha\right) \left(\frac{5\alpha-2}{4(\alpha-1)}k + \alpha \right) \left\langle \xi_{k+1}+C \left( z_{k} \right),\xi_{k+1}+C \left( z_{k} \right)-(\xi_{k}+C \left( z_{k-1} \right))\right\rangle. \label{dec:dif:eq}
\end{align}
Summing the indentities in  \eqref{dec:dif:u-lambda}, \eqref{dec:dif:norm}, \eqref{dec:dif:vi} and \eqref{dec:dif:eq} and taking into account that $\eta+1-\alpha \leq 0$, we obtain the statement in \cref{lemma8}.
\end{proofff}

\begin{prooffff}
Let $k \geq 1$ be fixed.

(i) By \cref{lemma8}, we have that
\begin{align}
\E_{\eta,k+1} - \E_{\eta,k}
\leq \ & 2 \eta \gamma(2-\alpha)\left\langle z_{k+1} - z_{*} , \xi_{k+1}+C \left( z_{k} \right) \right\rangle+ 2 \gamma \left(\nu_{0}k +\nu_{1}\right) \left\langle z_{k+1} - z_{k} , \xi_{k+1}+C \left( z_{k} \right) \right\rangle \nonumber \\
& + 2\nu_{2} k \left\lVert z_{k+1} - z_{k} \right\rVert ^{2} +\frac{1}{2} \gamma^{2} (\nu_{3}k +\nu_{4}) \left\lVert \xi_{k+1}+C \left( z_{k} \right) \right\rVert ^{2} \nonumber \\
& - 2 \gamma\left(\frac{3(\alpha-2)}{4(\alpha-1)}k+\alpha\right) \left( k + \alpha \right) \left\langle z_{k+1} - z_{k} ,\xi_{k+1}+C \left( z_{k} \right) - (\xi_{k}+C \left( z_{k-1} \right))\right\rangle\nonumber\\
&- \gamma^{2}\left(\frac{3(\alpha-2)}{4(\alpha-1)}k+\alpha\right)(k+\alpha)  \left\lVert \xi_{k+1}+C \left( z_{k} \right) - (\xi_{k}+C \left( z_{k-1} \right)) \right\rVert ^{2}.
\label{dec:inquen}
\end{align}
Using the Cauchy-Schwarz inequality, we obtain
\begin{align}
&-2 \eta \gamma(\alpha-2)\left\langle z_{k+1} - z_{*} , \xi_{k+1}+C \left( z_{k} \right) \right\rangle\nonumber\\
=\,\,& -2 \eta \gamma(\alpha-2)\left\langle z_{k+1} - z_{*} , \xi_{k+1}+C \left( z_{k+1} \right) \right\rangle+2 \eta \gamma(\alpha-2)\left\langle z_{k+1} - z_{*} , C \left( z_{k+1} \right)-C \left( z_{k} \right) \right\rangle\nonumber\\
\leq\,\,& -2 \eta \gamma(\alpha-2)\left\langle z_{k+1} - z_{*} , \xi_{k+1}+C \left( z_{k+1} \right) \right\rangle +\frac{4 \eta^{2} (\alpha -2) (\alpha-1)}{3(k+1)\sqrt{k+1}} \left\lVert z_{k+1}-z_{*} \right\rVert^2\nonumber\\
&\qquad+\frac{3(\alpha -2)}{4(\alpha-1)}\gamma^{2}(k+1)\sqrt{k+1}\|C \left( z_{k+1} \right)-C \left( z_{k} \right)\|^2.
\label{decwb:inq}
\end{align}

Using the monotonicity of $M$ in combination with \eqref{algo:inc-M}, the coercivity of $C$, and \eqref{dis:d-u}, it yields
\begin{align}
&\quad - 2 \gamma\left(\frac{3(\alpha-2)}{4(\alpha-1)}k+\alpha\right) \left( k + \alpha \right) \left\langle z_{k+1} - z_{k} ,\xi_{k+1}+C \left( z_{k} \right) - (\xi_{k}+C \left( z_{k-1} \right)) \right\rangle \nonumber\\
&\,\,=- 2 \gamma\left(\frac{3(\alpha-2)}{4(\alpha-1)}k+\alpha\right) \left( k + \alpha \right)\Big( \left\langle z_{k+1} - z_{k} , \xi_{k+1} - \xi_{k}\right\rangle+ \left\langle z_{k+1} - z_{k} , C \left( z_{k} \right) - C \left( z_{k-1} \right)\right\rangle\Big)\nonumber\\
&\,\,\leq- 2 \gamma\left(\frac{3(\alpha-2)}{4(\alpha-1)}k+\alpha\right)\left( k + \alpha \right) \left\langle z_{k+1} - z_{k} , C \left( z_{k} \right) - C \left( z_{k-1} \right)\right\rangle\nonumber\\
&\,\,=- 2 \gamma\left(\frac{3(\alpha-2)}{4(\alpha-1)}k+\alpha\right)k \left\langle z_{k} - z_{k-1} , C \left( z_{k} \right) - C \left( z_{k-1} \right)\right\rangle\nonumber\\
&\qquad+ \alpha \gamma^{2}\left(\frac{3(\alpha-2)}{4(\alpha-1)}k+\alpha\right)  \left\langle \xi_{k+1}+C \left( z_{k} \right) , C \left( z_{k} \right) - C \left( z_{k-1} \right)\right\rangle\nonumber\\
&\qquad + \gamma^{2}\left(\frac{3(\alpha-2)}{4(\alpha-1)}k+\alpha\right)(2k+\alpha)\left\langle \xi_{k+1}+C \left( z_{k} \right) - (\xi_{k}+C \left( z_{k-1} \right)), C \left( z_{k} \right) - C \left( z_{k-1} \right)\right\rangle\nonumber\\
&\,\,\leq \Bigg(2 \gamma\left(\frac{3(\alpha-2)}{4(\alpha-1)}(k+1)+\alpha\right)(k+1)  \left\langle z_{k+1} - z_{k} , C \left( z_{k+1} \right) - C \left( z_{k} \right)\right\rangle\nonumber\\
&\qquad-2 \gamma\left(\frac{3(\alpha-2)}{4(\alpha-1)}k+\alpha\right)k \left\langle z_{k} - z_{k-1} , C \left( z_{k} \right) - C \left( z_{k-1} \right)\right\rangle\Bigg)
\nonumber\\
&\qquad- 2 \gamma\beta\left(\frac{3(\alpha-2)}{4(\alpha-1)} (k+1)+\alpha\right)(k+1)  \| C \left( z_{k+1} \right) - C \left( z_{k} \right)\|^2\nonumber\\
&\qquad+ \alpha \gamma^{2}\left(\frac{3(\alpha-2)}{4(\alpha-1)} k+\alpha\right)  \left\langle \xi_{k+1}+C \left( z_{k} \right) , C \left( z_{k} \right) - C \left( z_{k-1} \right)\right\rangle\nonumber\\
&\qquad + \gamma^{2}\left(\frac{3(\alpha-2)}{4(\alpha-1)} k+\alpha\right)(2k+\alpha)\left\langle \xi_{k+1}+C \left( z_{k} \right) -(\xi_{k}+C \left( z_{k-1} \right)), C \left( z_{k} \right) - C \left( z_{k-1} \right)\right\rangle.
\label{decbb1:inq}
\end{align}
Using Young's inequality, we obtain
\begin{align}
&\alpha \gamma^{2}\left(\frac{3(\alpha-2)}{4(\alpha-1)}k+\alpha\right)  \left\langle \xi_{k+1}+C \left( z_{k} \right) , C \left( z_{k} \right) - C \left( z_{k-1} \right)\right\rangle\nonumber\\
\leq\,\,& \frac{1}{2}\alpha \gamma^{2}\left(\frac{3(\alpha-2)}{4(\alpha-1)}k+\alpha\right)\sqrt{\frac{3(\alpha-2)}{4(\alpha-1)}k+\alpha}  \left\lVert C \left( z_{k} \right)-C \left( z_{k-1} \right) \right\rVert^2\nonumber\\
&\quad+\frac{1}{2}\alpha \gamma^{2}\sqrt{\frac{3(\alpha-2)}{4(\alpha-1)}k+\alpha}  \left\lVert \xi_{k+1}+C \left( z_{k} \right) \right\rVert^2 \nonumber\\
=\,\,& -\frac{1}{2}\alpha \gamma^{2}\Bigg(\left(\frac{3(\alpha-2)}{4(\alpha-1)}(k+1)+\alpha\right)\sqrt{\frac{3(\alpha-2)}{4(\alpha-1)}(k+1)+\alpha}  \|C \left( z_{k+1} \right)-C \left( z_{k} \right)\|^2\nonumber\\
&\qquad \qquad \quad -\left(\frac{3(\alpha-2)}{4(\alpha-1)}k+\alpha\right)\sqrt{\frac{3(\alpha-2)}{4(\alpha-1)}k+\alpha}  \left\lVert C \left( z_{k} \right)-C \left( z_{k-1} \right) \right\rVert^2\Bigg)\nonumber\\
& \quad+\frac{1}{2}\alpha \gamma^{2}\left(\frac{3(\alpha-2)}{4(\alpha-1)}(k+1)+\alpha\right)\sqrt{\frac{3(\alpha-2)}{4(\alpha-1)}(k+1)+\alpha}  \|C \left( z_{k+1} \right)-C \left( z_{k} \right)\|^2\nonumber\\
&\quad +\frac{1}{2}\alpha \gamma^{2}\sqrt{\frac{3(\alpha-2)}{4(\alpha-1)}k+\alpha}  \left\lVert \xi_{k+1}+C \left( z_{k} \right) \right\rVert^2,
\label{bbxn1}
\end{align}
and
\begin{align}
&\gamma^{2}\left(\frac{3(\alpha-2)}{4(\alpha-1)}k+\alpha\right)(2k+\alpha) \left\langle \xi_{k+1}+C \left( z_{k} \right) - (\xi_{k}+C \left( z_{k-1} \right)), C \left( z_{k} \right) - C \left( z_{k-1} \right)\right\rangle\nonumber\\
 \leq &  \ \frac{1}{2(2-\varepsilon)}\gamma^{2}\left(\frac{3(\alpha-2)}{4(\alpha-1)}k+\alpha\right)(2k+\alpha)\|\xi_{k+1}+C \left( z_{k} \right) - (\xi_{k}+C \left( z_{k-1} \right))\|^2 \nonumber\\
& \ +\frac{1}{2}(2-\varepsilon)\gamma^{2}\left(\frac{3(\alpha-2)}{4(\alpha-1)}(k+1)+\alpha\right)(2(k+1)+\alpha)\|C \left( z_{k+1} \right) - C \left( z_{k} \right)\|^2\nonumber\\
& \ -\frac{1}{2}(2-\varepsilon)\gamma^{2}\Bigg(\left(\frac{3(\alpha-2)}{4(\alpha-1)}(k+1)+\alpha\right)(2(k+1)+\alpha)\|C \left( z_{k+1} \right) - C \left( z_{k} \right)\|^2\nonumber\\
&\qquad \qquad \qquad \qquad-\left(\frac{3(\alpha-2)}{4(\alpha-1)}k+\alpha\right)(2k+\alpha)\|C \left( z_{k} \right) - C \left( z_{k-1} \right)\|^2\Bigg).
\label{bbxn2}
\end{align}
Plugging \eqref{bbxn1} and \eqref{bbxn2} into \eqref{decbb1:inq}, and adding the resulting inequality to \eqref{decwb:inq}, yields
\begin{align}
&-2 \eta \gamma(\alpha-2)\left\langle z_{k+1} - z_{*} ,\xi_{k+1}+C \left( z_{k} \right) \right\rangle\nonumber\\
&- 2 \gamma\left(\frac{3(\alpha-2)}{4(\alpha-1)}k+\alpha\right) \left( k + \alpha \right) \left\langle z_{k+1} - z_{k} , \xi_{k+1}+C \left( z_{k} \right) - (\xi_{k}+C \left( z_{k-1} \right)) \right\rangle\nonumber\\
\leq & \ 2 \gamma\Bigg(\left(\frac{3(\alpha-2)}{4(\alpha-1)}(k+1)+\alpha\right) (k+1) \left\langle z_{k+1} - z_{k} , C \left( z_{k+1} \right) - C \left( z_{k} \right)\right\rangle\nonumber\\
&\qquad \  -\left(\frac{3(\alpha-2)}{4(\alpha-1)}k+\alpha\right)k \left\langle z_{k} - z_{k-1} , C \left( z_{k} \right) - C \left( z_{k-1} \right)\right\rangle\Bigg)
\nonumber\\
& \ -\frac{1}{2}\alpha \gamma^{2}\Bigg(\left(\frac{3(\alpha-2)}{4(\alpha-1)}(k+1)+\alpha\right)\sqrt{\frac{3(\alpha-2)}{4(\alpha-1)}(k+1)+\alpha}  \|C \left( z_{k+1} \right)-C \left( z_{k} \right)\|^2\nonumber\\
&\qquad \qquad \quad -\left(\frac{3(\alpha-2)}{4(\alpha-1)}k+\alpha\right)\sqrt{\frac{3(\alpha-2)}{4(\alpha-1)}k+\alpha}  \left\lVert C \left( z_{k} \right)-C \left( z_{k-1} \right) \right\rVert^2\Bigg)\nonumber\\
& \ -\frac{1}{2}(2-\varepsilon)\gamma^{2}\Bigg(\left(\frac{3(\alpha-2)}{4(\alpha-1)}(k+1)+\alpha\right)(2(k+1)+\alpha)\|C \left( z_{k+1} \right) - C \left( z_{k} \right)\|^2\nonumber\\
&\qquad \qquad \qquad \quad -\left(\frac{3(\alpha-2)}{4(\alpha-1)}k+\alpha\right)(2k+\alpha)\|C \left( z_{k} \right) - C \left( z_{k-1} \right)\|^2\Bigg)\nonumber\\
& \ -2 \eta \gamma(\alpha-2)\left\langle z_{k+1} - z_{*} , \xi_{k+1}+C \left( z_{k+1} \right) \right\rangle +\frac{4 \eta^{2} (\alpha -2) (\alpha-1)}{3(k+1)\sqrt{k+1}} \left\lVert z_{k+1}-z_{*} \right\rVert^2\nonumber\\
& \ + \frac{1}{2}\alpha \gamma^{2}\sqrt{\frac{3(\alpha-2)}{4(\alpha-1)}k+\alpha}  \left\lVert \xi_{k+1}+C \left( z_{k} \right) \right\rVert^2 - \omega_{k} \|C \left( z_{k+1} \right)-C \left( z_{k} \right)\|^2 \nonumber\\
& \ + \frac{1}{2(2-\varepsilon)}\gamma^{2}\left(\frac{3(\alpha-2)}{4(\alpha-1)}k+\alpha\right)(2k+\alpha)\|\xi_{k+1}+C \left( z_{k} \right) - (\xi_{k}+C \left( z_{k-1} \right))\|^2,
\label{bbxn3}
\end{align}
where $\omega_{k}$ is defined in \eqref{defi:mu-k}.

Finally, after summing \eqref{bbxn3} and \eqref{dec:inquen}, we obtain
\begin{align}
& \ \E_{\eta,k+1} - \E_{\eta,k}\nonumber\\
\leq & \ 2 \gamma\Bigg(\left(\frac{3(\alpha-2)}{4(\alpha-1)}(k+1)+\alpha\right) (k+1) \left\langle z_{k+1} - z_{k} , C \left( z_{k+1} \right) - C \left( z_{k} \right)\right\rangle\nonumber\\
&\qquad \ -\left(\frac{3(\alpha-2)}{4(\alpha-1)}k+\alpha\right)k \left\langle z_{k} - z_{k-1} , C \left( z_{k} \right) - C \left( z_{k-1} \right)\right\rangle\Bigg)
\nonumber\\
& \ -\frac{1}{2}\alpha \gamma^{2}\Bigg(\left(\frac{3(\alpha-2)}{4(\alpha-1)}(k+1)+\alpha\right)\sqrt{\frac{3(\alpha-2)}{4(\alpha-1)}(k+1)+\alpha}  \|C \left( z_{k+1} \right)-C \left( z_{k} \right)\|^2\nonumber\\
&\qquad \qquad  \quad -\left(\frac{3(\alpha-2)}{4(\alpha-1)}k+\alpha\right)\sqrt{\frac{3(\alpha-2)}{4(\alpha-1)}k+\alpha}  \left\lVert C \left( z_{k} \right)-C \left( z_{k-1} \right) \right\rVert^2\Bigg)\nonumber\\
& \ -\frac{1}{2}(2-\varepsilon)\gamma^{2}\Bigg(\left(\frac{3(\alpha-2)}{4(\alpha-1)}(k+1)+\alpha\right)(2(k+1)+\alpha)\|C \left( z_{k+1} \right) - C \left( z_{k} \right)\|^2\nonumber\\
&\qquad \qquad \qquad \quad -\left(\frac{3(\alpha-2)}{4(\alpha-1)}k+\alpha\right)(2k+\alpha)\|C \left( z_{k} \right) - C \left( z_{k-1} \right)\|^2\Bigg)\nonumber\\
& \ -2 \eta \gamma(\alpha-2)\left\langle z_{k+1} - z_{*} , \xi_{k+1}+C \left( z_{k+1} \right) \right\rangle +\frac{4 \eta^{2} (\alpha -2) (\alpha-1)}{3(k+1)\sqrt{k+1}} \left\lVert z_{k+1}-z_{*} \right\rVert^2\nonumber\\
& \ + 2 \gamma \left(\nu_{0}k +\nu_{1}\right) \left\langle z_{k+1} - z_{k} , \xi_{k+1}+C \left( z_{k} \right) \right\rangle + 2\nu_{2} k \left\lVert z_{k+1} - z_{k} \right\rVert ^{2}\nonumber\\
& \ + \frac{1}{2} \gamma^{2} (\nu_{3}k +\alpha\sqrt{\frac{3(\alpha-2)}{4(\alpha-1)}k+\alpha}+\nu_{4}) \left\lVert \xi_{k+1}+C \left( z_{k} \right) \right\rVert ^{2}-\omega_{k}\|C \left( z_{k+1} \right)-C \left( z_{k} \right)\|^2  \nonumber \\
& \ - \gamma^{2}\left(\frac{3(\alpha-2)}{4(\alpha-1)}k+\alpha\right)\left(\frac{1-\varepsilon}{2-\varepsilon}k+\frac{3-2\varepsilon}{2(2-\varepsilon)}\alpha\right)  \left\lVert \xi_{k+1}+C \left( z_{k} \right) - (\xi_{k}+C \left( z_{k-1} \right)) \right\rVert ^{2}.\label{bbck1}
\end{align}
Taking into account the definition of $\F_{\eta,k}$ in \eqref{def:G}, we notice that \eqref{bbck1} is nothing else than \eqref{bbck2}.

(ii) Observe that
\begin{align}
& \ \frac{3(\alpha-2)}{2(\alpha-1)}\eta \gamma k\big\langle z_{k} - z_{*} , \xi_k +C \left( z_{k-1} \right) \big\rangle
+ \frac{3(\alpha-2)(5\alpha-2)}{32(\alpha-1)^2} \gamma^{2} k^2 \left\lVert \xi_k +C \left( z_{k-1} \right) \right\rVert ^{2}\nonumber\\
= & \ \frac{3(\alpha-2)}{2(5\alpha-2)}\left(\frac{5\alpha-2}{\alpha-1}\eta \gamma k\langle z_{k} - z_{*} , \xi_k +C \left( z_{k-1} \right) \rangle
+  \frac{(5\alpha-2)^2}{16(\alpha-1)^2}\gamma^{2} k^2  \left\lVert \xi_k +C \left( z_{k-1} \right) \right\rVert ^{2}\right)\nonumber\\
= & \ \frac{3(\alpha-2)}{2(5\alpha-2)}\left\lVert 2\eta (z_{k} - z_{*})
+ \frac{5\alpha-2}{4(\alpha-1)} \gamma k\left( \xi_k +C \left( z_{k-1} \right) \right)\right\rVert ^2-\frac{6(\alpha-2)}{5\alpha-2}\eta^2\left\lVert z_k-z_{*}\right\rVert ^2.
\label{dec:G:ine1}
\end{align}
By plugging \eqref{dec:G:ine1} into the definition of $\F_{\eta,k}$, it yields
\begin{align}
\F_{\eta,k}
= & \ \dfrac{1}{2} \left\lVert 2 \eta \left( z_{k} - z_{*} \right) + 2k \left( z_{k} - z_{k-1} \right) + \frac{5\alpha-2}{4(\alpha-1)}\gamma k \left(\xi_k +C \left( z_{k-1} \right)\right) \right\rVert ^{2} \nonumber\\
&	+ 2 \eta \left( \alpha - 1 - \eta \right) \left\lVert z_{k} - z_{*} \right\rVert ^{2}
+ 2 \eta \gamma\left(\frac{3(\alpha-2)}{4(\alpha-1)}k+\alpha\right) \big\langle z_{k} - z_{*} , \xi_k +C \left( z_{k-1} \right) \big\rangle \nonumber\\
& + \frac{1}{2}\gamma^{2}\left(\frac{3(\alpha-2)}{4(\alpha-1)}k+\alpha\right)\left( \frac{5\alpha-2}{4(\alpha-1)} k + \alpha \right)  \left\lVert \xi_k +C \left( z_{k-1} \right) \right\rVert ^{2}\nonumber\\
&-2 \gamma\left(\frac{3(\alpha-2)}{4(\alpha-1)}k+\alpha\right)k  \left\langle z_{k} - z_{k-1} , C \left( z_{k} \right) - C \left( z_{k-1} \right)\right\rangle
\nonumber\\
&+  \frac{1}{2}\gamma^{2}\left(\frac{3(\alpha-2)}{4(\alpha-1)}k+\alpha\right)\left((2-\varepsilon)(2k+\alpha)+\alpha\sqrt{\frac{3(\alpha-2)}{4(\alpha-1)}k+\alpha} \right)\left\lVert C \left( z_{k} \right)-C \left( z_{k-1} \right) \right\rVert^2\nonumber\\
= & \ \dfrac{1}{2} \left\lVert 2 \eta \left( z_{k} - z_{*} \right) + 2k \left( z_{k} - z_{k-1} \right) + \frac{5\alpha-2}{4(\alpha-1)}\gamma k \left(\xi_k +C \left( z_{k-1} \right)\right) \right\rVert ^{2} \nonumber\\
&+\frac{3(\alpha-2)}{2(5\alpha-2)} \left\lVert 2 \eta \left( z_{k} - z_{*} \right) + \frac{5\alpha-2}{4(\alpha-1)}\gamma k \left(\xi_k +C \left( z_{k-1} \right)\right) \right\rVert ^{2} \nonumber\\
&	+ 2 \eta(\alpha-1) \left( 1 - \frac{8\eta}{5\alpha-2} \right) \left\lVert z_{k} - z_{*} \right\rVert ^{2}
+ \frac{1}{2} \gamma^{2}\alpha (2k+\alpha) \left\lVert \xi_k +C \left( z_{k-1} \right) \right\rVert ^{2}\nonumber\\
&-2 \gamma\left(\frac{3(\alpha-2)}{4(\alpha-1)}k+\alpha\right)k  \left\langle z_{k} - z_{k-1} , C \left( z_{k} \right) - C \left( z_{k-1} \right)\right\rangle+2\eta \alpha \gamma \big\langle z_{k} - z_{*} , \xi_k +C \left( z_{k-1} \right) \big\rangle
\nonumber\\
&+\frac{1}{2}\gamma^{2}\left(\frac{3(\alpha-2)}{4(\alpha-1)}k+\alpha\right)\left((2-\varepsilon)(2k+\alpha)+\alpha\sqrt{\frac{3(\alpha-2)}{4(\alpha-1)}k+\alpha} \right)  \left\lVert C \left( z_{k} \right)-C \left( z_{k-1} \right) \right\rVert^2\nonumber\\
= & \ \frac{\alpha+2}{5\alpha-2} \left\lVert 2 \eta \left( z_{k} - z_{*} \right) + 2k \left( z_{k} - z_{k-1} \right) + \frac{5\alpha-2}{4(\alpha-1)}\gamma k\left(\xi_k +C \left( z_{k-1} \right)\right) \right\rVert ^{2} \nonumber\\
&+\frac{3(\alpha-2)}{4(5\alpha-2)} \left\lVert 4 \eta \left( z_{k} - z_{*} \right)+ 2k \left( z_{k} - z_{k-1} \right) + \frac{5\alpha-2}{2(\alpha-1)}\gamma k \left(\xi_k +C \left( z_{k-1} \right)\right) \right\rVert ^{2} \nonumber\\
&	+ 2 \eta(\alpha-1) \left( 1 - \frac{8\eta}{5\alpha-2} \right) \left\lVert z_{k} - z_{*} \right\rVert ^{2}
+ \frac{1}{2} \gamma^{2}\alpha (2k+\alpha) \left\lVert \xi_k +C \left( z_{k-1} \right) \right\rVert ^{2}\nonumber\\
&+\frac{3(\alpha-2)}{5\alpha-2} k^2 \left\lVert z_k-z_{k-1} \right\rVert ^{2}-2 \gamma\left(\frac{3(\alpha-2)}{4(\alpha-1)}k+\alpha\right)k   \left\langle z_{k} - z_{k-1} , C \left( z_{k} \right) - C \left( z_{k-1} \right)\right\rangle
\nonumber\\
&+\frac{1}{2}\gamma^{2}\left(\frac{3(\alpha-2)}{4(\alpha-1)}k+\alpha\right)\left((2-\varepsilon)(2k+\alpha)+\alpha\sqrt{\frac{3(\alpha-2)}{4(\alpha-1)}k+\alpha} \right)  \left\lVert C \left( z_{k} \right)-C \left( z_{k-1} \right) \right\rVert^2\nonumber\\
&+2\eta \alpha \gamma \big\langle z_{k} - z_{*} , \xi_k +C \left( z_{k-1} \right) \big\rangle,
\label{def:GG}
\end{align}
where for obtaining the last statement we used the identity
\begin{equation*}
\|x\|^2+\|y\|^2=\frac{1}{2}\left(\|x+y\|^2+\|x-y\|^2\right) \quad \forall x,y\in \mathcal{H}.
\end{equation*}
Finally, by employing in (\ref{def:GG})
\begin{align*}
& -\frac{3(\alpha-2)}{2(\alpha-1)}\gamma k^{2}  \left\langle z_{k} - z_{k-1} , C \left( z_{k} \right) - C \left( z_{k-1} \right)\right\rangle
+\frac{3(\alpha-2)}{4(\alpha-1)}(2-\varepsilon) \gamma^{2} k^2\left\lVert C \left( z_{k} \right)-C \left( z_{k-1} \right) \right\rVert^2\nonumber\\
= & \ \frac{3(\alpha-2)}{4(\alpha-1)}\gamma k^{2}  \left\lVert \frac{1}{\sqrt{(2-\varepsilon) \gamma}}(z _{k} - z_{k-1} )-\sqrt{(2-\varepsilon) \gamma}( C \left( z_{k} \right) - C \left( z_{k-1} \right))\right\rVert ^2\nonumber\\
& -\frac{3(\alpha-2)}{4(\alpha-1)(2-\varepsilon)} k^2\left\lVert z_k-z_{k-1} \right\rVert^2,
\end{align*}
and then the Lipschitz continuity of $C$ , it yields
\begin{align*}
\F_{\eta,k}
= & \ \frac{\alpha+2}{5\alpha-2} \left\lVert 2 \eta \left( z_{k} - z_{*} \right) + 2k \left( z_{k} - z_{k-1} \right) + \frac{5\alpha-2}{4(\alpha-1)}\gamma k \left(\xi_k +C \left( z_{k-1} \right)\right) \right\rVert ^{2} \nonumber\\
&+\frac{3(\alpha-2)}{4(5\alpha-2)} \left\lVert 4 \eta \left( z_{k} - z_{*} \right)+ 2k \left( z_{k} - z_{k-1} \right) + \frac{5\alpha-2}{2(\alpha-1)}\gamma k \left(\xi_k +C \left( z_{k-1} \right)\right) \right\rVert ^{2} \nonumber\\
&	+ 2 \eta(\alpha-1) \left( 1 - \frac{8\eta}{5\alpha-2} \right) \left\lVert z_{k} - z_{*} \right\rVert ^{2}
+ \frac{1}{2} \gamma^{2}\alpha (2k+\alpha) \left\lVert \xi_k +C \left( z_{k-1} \right) \right\rVert ^{2}\nonumber\\
&+3(\alpha-2)\left(\frac{1}{5\alpha-2}-\frac{1}{4(2-\varepsilon)(\alpha-1)}\right) k^2 \left\lVert z_k-z_{k-1} \right\rVert ^{2}\nonumber\\
& +\frac{3(\alpha-2)}{4(\alpha-1)}\gamma k^{2}  \left\lVert \frac{1}{\sqrt{(2-\varepsilon) \gamma}}(z _{k} - z_{k-1} )-\sqrt{(2-\varepsilon) \gamma}( C \left( z_{k} \right) - C \left( z_{k-1} \right))\right\rVert ^2\nonumber\\
&-2 \alpha \gamma k   \left\langle z_{k} - z_{k-1} , C \left( z_{k} \right) - C \left( z_{k-1} \right)\right\rangle
+\rho_k  \left\lVert C \left( z_{k} \right)-C \left( z_{k-1} \right) \right\rVert^2\nonumber\\
&+2\eta \alpha \gamma \big\langle z_{k} - z_{*} , \xi_k +C \left( z_{k-1} \right) \big\rangle\nonumber\\
\geq & \ \frac{\alpha+2}{5\alpha-2} \left\lVert 2 \eta \left( z_{k} - z_{*} \right) + 2k \left( z_{k} - z_{k-1} \right) + \frac{5\alpha-2}{4(\alpha-1)}\gamma k \left(\xi_k +C \left( z_{k-1} \right)\right) \right\rVert ^{2} \nonumber\\
&+\frac{3(\alpha-2)}{4(5\alpha-2)} \left\lVert 4 \eta \left( z_{k} - z_{*} \right)+ 2k \left( z_{k} - z_{k-1} \right) + \frac{5\alpha-2}{2(\alpha-1)}\gamma k \left(\xi_k +C \left( z_{k-1} \right)\right) \right\rVert ^{2} \nonumber\\
&	+ 2 \eta(\alpha-1) \left( 1 - \frac{8\eta}{5\alpha-2} \right) \left\lVert z_{k} - z_{*} \right\rVert ^{2}
+ \frac{1}{2} \gamma^{2}\alpha (2k+\alpha) \left\lVert \xi_k +C \left( z_{k-1} \right) \right\rVert ^{2}\nonumber\\
&+\left(3(\alpha-2)\left(\frac{1}{5\alpha-2}-\frac{1}{4(2-\varepsilon)(\alpha-1)}\right) k^2-\frac{2}{\beta}\alpha \gamma k\right) \left\lVert z_k-z_{k-1} \right\rVert ^{2}\nonumber\\
& +\frac{3(\alpha-2)}{4(\alpha-1)}\gamma k^{2}  \left\lVert \frac{1}{\sqrt{(2-\varepsilon) \gamma}}(z _{k} - z_{k-1} )-\sqrt{(2-\varepsilon) \gamma}( C \left( z_{k} \right) - C \left( z_{k-1} \right))\right\rVert ^2\nonumber\\
&+2\eta \alpha \gamma \big\langle z_{k} - z_{*} , \xi_k +C \left( z_{k-1} \right) \big\rangle
+\rho_{k}  \left\lVert C \left( z_{k} \right)-C \left( z_{k-1} \right) \right\rVert^2,
\end{align*}
where
\begin{equation}
\rho_{k}:=\frac{1}{2}\gamma^{2}\left(\left(\frac{3(\alpha-2)}{4(\alpha-1)}k+\alpha\right)\alpha\sqrt{\frac{3(\alpha-2)}{4(\alpha-1)}k+\alpha}+\frac{11\alpha-14}{4(\alpha-1)}(2-\varepsilon)\alpha k+(2-\varepsilon)\alpha^2\right)>0.\nonumber
\end{equation}
The desired inequality follows by taking into account that the last term is nonnegative.
\end{prooffff}

\begin{proofffff}
(i) Since $0 < \gamma<\left(1+\frac{\varepsilon}{2-\varepsilon}\right)\beta$ and $0<\varepsilon<\frac{3(\alpha-2)}{4(\alpha-1)}$, it holds
\begin{equation}
2\beta -(2-\varepsilon)\gamma > 0 \quad \textnormal{and} \quad \frac{1}{5\alpha-2}-\frac{1}{4(\alpha-1)(2-\varepsilon)}>0.\nonumber
\end{equation}
Therefore, for $k$ large enough
\begin{equation}\label{deciss:G}
\omega_{k}>0 \quad \textnormal{and} \quad \frac{3}{2}(\alpha-2)\left(\frac{1}{5\alpha-2}-\frac{1}{4(2-\varepsilon)(\alpha-1)}\right) k^2-\frac{2}{\beta}\alpha \gamma k >0.
\end{equation}
For every $k \geq 1$, the discriminant of the quadratic expression in $S_k$ reads
\begin{align}
\Delta_k& =(2\eta \alpha \gamma )^2-2\eta(\alpha-1)\left(1-\frac{8}{5\alpha-2}\eta\right)\alpha \gamma^{2} k\nonumber\\
&=-2\eta\alpha \gamma^{2}(\alpha-1)\left(1-\frac{8}{5\alpha-2}\eta\right) k+4\eta^2\alpha^2 \gamma^{2}.\nonumber
\end{align}
Since $0<\eta<\frac{5\alpha-2}{8}$, the coefficient of $k$ is negative, therefore, for $k$ large enough, $\Delta_k \leq 0$, which, invoking \cref{lem111} (i),  implies $S_k\geq 0$. Consequently, there exists $K_0(\eta) \geq 1$ such that for every $k \geq K_0(\eta)$ the inequalities in \eqref{deciss:G} hold and $S_k \geq 0$. By taking into consideration the lower bound of $\F_{\eta,k}$ in \eqref{G:inequa}, we see that \eqref{def:S} holds for every $k \geq K_{0}(\eta)$.

(ii) Let $k \geq 1$ be fixed. For the discriminant of the quadratic expression in $R_k$, it holds
\begin{align}
\frac{\Delta_k}{4\gamma^{2}}&=(\nu_{0}k+\nu_{1})^2-\frac{9\alpha-2}{2(5\alpha-2)}\nu_{2}\left(\nu_{3} k+\alpha \sqrt{\nu_{5}k+\alpha}+\nu_{4} \right)k\nonumber\\
&=\left(\nu_{0}^2-\frac{9\alpha-2}{2(5\alpha-2)}\nu_{2}\nu_{3}\right)k^2-\frac{9\alpha-2}{2(5\alpha-2)}\nu_{2}\alpha k\sqrt{\nu_{5}k+\alpha}
+\left(2\nu_{0}\nu_{1}-\frac{9\alpha-2}{2(5\alpha-2)}\nu_{2}\nu_{4}\right)k+\nu_{1}^2.\nonumber
\end{align}
Since $\left(\nu_{0}^2-\frac{9\alpha-2}{2(5\alpha-2)}\nu_{2}\nu_{3}\right)k^2$ is the dominant term in the above polynomial, it suffices to show that for every $\eta$ in an open interval it holds
\begin{equation}\label{zjsjw1}
\nu_{0}^2-\frac{9\alpha-2}{2(5\alpha-2)}\nu_{2}\nu_{3}<0,
\end{equation}
where
\begin{align}
\nu_{0}&=2 (\eta+1-\alpha)-\frac{1}{4(\alpha-1)}(\alpha+2)(\alpha-2),\nonumber\\
\nu_{2}\nu_{3}&=-\frac{1}{\alpha-1}(5\alpha-2)(\alpha-2)(\eta+1-\alpha).\nonumber
\end{align}
This will ensure the existence of some integer $K_{1}(\eta)\geq 1$ such that $\Delta_k\leq0$ for every $k\geq  K_{1}(\eta)$, and further, due to \cref{lem111} (ii), that $R_k\leq 0$ for every $k\geq  K_{1}(\eta)$.

We set $\zeta:=\eta+1-\alpha\leq 0$ and get
\begin{align}
\nu_{0}&=2\zeta-\frac{1}{4(\alpha-1)}(\alpha+2)(\alpha-2),\nonumber\\
\nu_{2}\nu_{3}&=-\frac{1}{\alpha-1}(5\alpha-2)(\alpha-2)\zeta,\nonumber
\end{align}
and
\begin{align}\label{zjsjw2}
\nu_{0}^2-\frac{9\alpha-2}{2(5\alpha-2)}\nu_{2}\nu_{3} = & \ \left(2\zeta -\frac{(\alpha+2)(\alpha-2)}{4(\alpha-1)}\right)^2+\frac{9\alpha-2}{2(5\alpha-2)}\frac{(5\alpha-2)(\alpha-2)}{\alpha-1}\zeta\nonumber\\
= & \ 4\zeta^2+\frac{(\alpha-2)(7\alpha-6)}{2(\alpha-1)}\zeta+\frac{(\alpha+2)^2(\alpha-2)^2}{16(\alpha-1)^2}.
\end{align}
The discriminant of this expression is
\begin{align}
\Delta_\zeta =  \frac{(\alpha-2)^2(7\alpha-6)^2}{4(\alpha-1)^2}-\frac{(\alpha+2)^2(\alpha-2)^2}{(\alpha-1)^2}
= \frac{1}{4(\alpha-1)^2}(\alpha-2)^2(9\alpha-2)(5\alpha-10)>0.\nonumber
\end{align}
Hence, in order to guarantee (\ref{zjsjw1}), we have to choose $\zeta$ between the two roots of the quadratic function arising in \eqref{zjsjw2}, in other words,
\begin{align}
\zeta_1(\alpha)&:= \frac{1}{8}\left(-\frac{1}{2(\alpha-1)}(\alpha-2)(7\alpha-6)-\sqrt{\Delta_\zeta}\right)\nonumber\\
&=-\frac{1}{16(\alpha-1)}(\alpha-2)\left(7\alpha-6+\sqrt{(9\alpha-2)(5\alpha-10)}\right)\nonumber\\
< \zeta & =\eta+1-\alpha< \nonumber\\
\zeta_2(\alpha)&:= \frac{1}{8}\left(-\frac{1}{2(\alpha-1)}(\alpha-2)(7\alpha-6)+\sqrt{\Delta_\zeta}\right)\nonumber\\
&=-\frac{1}{16(\alpha-1)}(\alpha-2)\left(7\alpha-6-\sqrt{(9\alpha-2)(5\alpha-10)}\right).\nonumber
\end{align}
Obviously $\zeta_1(\alpha)<0$ and from Vieta formula $\zeta_1(\alpha)\cdot \zeta_2(\alpha)=\frac{(\alpha+2)^2(\alpha-2)^2}{64(\alpha-1)^2}>0$, it follows that $\zeta_2(\alpha)<0$.
Therefore, in order to be sure that $\nu_{0}^2-\frac{9\alpha-2}{2(5\alpha-2)}\nu_{2}\nu_{3}<0$,  $\eta$ must be chosen such that
\begin{equation}
\alpha-1+\zeta_1(\alpha)<\eta<\alpha-1+\zeta_2(\alpha). \nonumber
\end{equation}

Next we will show that
\begin{equation}
0<\alpha-1-\frac{1}{16(\alpha-1)}(\alpha-2)(7\alpha-6)<\frac{5\alpha}{8}-\frac{1}{4}.
\label{left:hand}
\end{equation}
Indeed, the first inequality follows immediately, since
\begin{align}
\alpha-1-\frac{1}{16(\alpha-1)}(\alpha-2)(7\alpha-6)&=\frac{1}{16(\alpha-1)}\left(16(\alpha-1)^2-(\alpha-2)(7\alpha-6)\right)\nonumber\\
&=\frac{1}{16(\alpha-1)}\left(\alpha^2+4(\alpha-1)(2\alpha-1)\right)>0.\nonumber
\end{align}
Using this relation, the second inequality in (\ref{left:hand}) can be equivalently written as
\begin{equation}
\alpha^2+4(\alpha-1)(2\alpha-1)<2(\alpha-1)(5\alpha-2)\ \Leftrightarrow \ 0<\alpha(\alpha-2),\nonumber
\end{equation}
which is true as $\alpha>2$.

From (\ref{left:hand}) we immediately deduce that
\begin{equation*}
0<\alpha-1+\zeta_2(\alpha) \qquad \textrm{ and } \qquad \alpha-1+\zeta_1(\alpha)< \frac{5\alpha}{8}-\frac{1}{4}.
\end{equation*}
This allows us to define 
\begin{align}
\underline{\eta}(\alpha)&:= \max\left\{0,\alpha-1+\zeta_1(\alpha)\right\}\nonumber\\ &=\max\left\{0,\frac{1}{16(\alpha-1)}\alpha^2+\frac{1}{4}(2\alpha-1)-\frac{1}{16(\alpha-1)}(\alpha-2)\sqrt{(9\alpha-2)(5\alpha-10)}\right\};\nonumber\\
\overline{\eta}(\alpha)&:= \min\left\{\frac{5}{8}\alpha-\frac{1}{4},\alpha-1+\zeta_2(\alpha)\right\}\nonumber\\ &=\min\left\{\frac{5}{8}\alpha-\frac{1}{4},\frac{1}{16(\alpha-1)}\alpha^2+\frac{1}{4}(2\alpha-1)+\frac{1}{16(\alpha-1)}(\alpha-2)\sqrt{(9\alpha-2)(5\alpha-10)}\right\},\nonumber
\end{align}
and to notice that $\underline{\eta}(\alpha)<\overline{\eta}(\alpha)$. In conclusion, choosing $\eta$ such that $\underline{\eta}(\alpha)<\eta<\overline{\eta}(\alpha)$, we have
\begin{equation*}
\nu_{0}^2-\frac{9\alpha-2}{2(5\alpha-2)}\nu_{2}\nu_{3}<0,
\end{equation*}
and therefore there exists $K_{1}(\eta)\geq 1$ such that $R_k\leq 0$ for every $k\geq K_{1}(\eta)$.
\end{proofffff}

\bibliographystyle{plain}
\bibliography{reference-en} %

\end{document}